\theoremstyle{definition}
\newtheorem{remark}{Remark}[section]
\theoremstyle{plain} 
\newtheorem{theorem}{Theorem}[section]
\newtheorem{lemma}{Lemma}[section]
\newtheorem{corollary}{Corollary}[section]
\newtheorem{proposition}{Proposition}[section]
\DeclareMathOperator{\diag}{diag}
\DeclareMathOperator{\id}{id}
\DeclareMathOperator{\Res}{Res}
\DeclareMathOperator{\SL}{SL}
\newcommand{\Atilde}{\widetilde{A}}
\newcommand{\Ltilde}{\widetilde{L}}
\newcommand{\Bound}{B^{1}(\Gamma,\HON)}
\newcommand{\Boundtwo}{B^{1}(\Gamma,\HOtwo)}  
\newcommand{\bfm}{\mathbf{m}}
\newcommand{\bfy}{\mathbf{y}}
\newcommand{\bfz}{\mathbf{z}}
\newcommand{\C}{\mathbb{C}}
\newcommand{\calA}{\mathcal{A}}
\newcommand{\calC}{\mathcal{C}}
\newcommand{\calF}{\mathcal{F}}
\newcommand{\calL}{\mathcal{L}}
\newcommand{\calM}{\mathcal{M}}
\newcommand{\calS}{\mathcal{S}}
\newcommand{\Cg}{\mathfrak{C}_{g}}
\newcommand{\Chat}{\widehat{\C}}
\newcommand{\COH}{H^{1}(\Gamma,\HON)}
\newcommand{\COHtwo}{H^{1}(\Gamma,\HOtwo)}
\newcommand{\Con}[1]{\mathcal{C}_{#1}}
\newcommand{\D}{\mathcal{D}}
\newcommand{\del}{\partial}
\newcommand{\delCg}{\nabla}
\newcommand{\nabmy}[2]{\nabla^{(#1)}
	_{#2}}
\newcommand{\delCgtilde}{\widetilde{\nabla}}
\newcommand{\delMg}{\nabla_{\hspace{-1 mm}\Mg}}
\newcommand{\dy}{\partial_{y}}
\newcommand{\dyb}{\partial_{\ybar}}
\newcommand{\Ei}{Z^{1}(\Gamma,\HON)}
\newcommand{\Eitwo}{Z^{1}(\Gamma,\HOtwo)}
\newcommand{\FN}{\calF_{N}}
\newcommand{\half}{\frac{1}{2}}
\newcommand{\Hg}{\mathcal H^{(g)}}
\newcommand{\HgN}{\Hg_{N}}
\newcommand{\Hgtwo}{\Hg_{2}}
\newcommand{\HH}{\mathbb{H}}
\newcommand{\Hm}{H_{\bfm}}
\newcommand{\HON}{{\mathcal H}^{(0)}_{1-N}}
\newcommand{\HOtwo}{{\mathcal H}^{(0)}_{-1}}
\newcommand{\I}{\mathcal{I}}
\newcommand{\im}{\textup{i}}
\newcommand{\Ip}{\mathcal{I}_{+}}
\newcommand{\Mg}{{\frak{M}}_{g}}
\newcommand{\Omo}{\Omega_{0}(\Gamma)}
\newcommand{\pmet}{\lambda} 
\newcommand{\PM}{\Lambda} 
\newcommand{\Poly}{\mathfrak {P}}
\newcommand{\Psitilde}{\widetilde{\Psi}}
\newcommand{\Rtilde}{\widetilde{R}}
\newcommand{\R}{\mathbb{R}}
\newcommand{\Schg}{\mathfrak{S}_{g}}
\newcommand{\secref}[1]{\S\ref{#1}}
\newcommand{\Sg}{\mathcal{S}^{(g)}}
\newcommand{\slLie}{\mathfrak{sl}}
\newcommand{\tpi}{2\pi \im} 
\newcommand{\Tr}{\textup{tr}\,}
\newcommand{\Utilde}{\widetilde{U}}
\newcommand{\vol}{\mathbf{v}}
\newcommand{\ybar}{\overline{y}}
\newcommand{\ygam}{y_{\gamma}}
\newcommand{\Ytilde}{\widetilde{Y}}
\newcommand{\Z}{\mathbb{Z}}
\newcommand{\zbar}{\overline{z}} 
\newcommand{\zgam}{z_{\gamma}}
\title{Some Properties and Applications of Bers Quasiforms on Riemann Surfaces}
\begin{document}
	\author{Michael P. Tuite}
	\address{School of Mathematical and Statistical Sciences\\ 
University of Galway, Galway H91 TK33, Ireland}
	\email{michael.tuite@universityofgalway.ie}
	\author[Michael Welby]{Michael Welby$^{\dagger}$}
	\thanks{$^{\dagger}$Supported by an Irish Research Council Government of Ireland Postgraduate Scholarship.}
\email{michael.welby@universityofgalway.ie}	
	\maketitle
	\begin{abstract}	
We describe some properties of the Bers quasiform
on a compact Riemann surface in the Schottky sewing scheme. Our main results are: (i) the expansion of meromorphic differential forms in terms of holomorphic forms and derivatives of the Bers quasiform, (ii) power series expansions in the Schottky sewing parameters of the Bers quasiform and holomorphic forms, (iii) a novel differential operator which acts on meromorphic forms in several variables which we apply in deriving differential equations for classical objects such as the bidifferential of the second kind, the projective connection, holomorphic 1-forms and the prime form. 
	\end{abstract}
	\section{Introduction} 	
In this paper we describe some novel properties for the Bers quasiform
on a compact genus $g$ Riemann surface $\Sg$ in the Schottky sewing scheme e.g. \cite{Fo,Bo}. The Bers quasiform, which we denote by $\Psi_{N}(x,y)$ for integer $N\ge 1$, was defined by Bers as a certain Poincar\'e series
and was introduced as a means for constructing Bers potentials for holomorphic weight $N$ differential forms  on $\Sg$ \cite{Be1,Be2}. 
Here we develop some general properties of the Bers quasiform which have important intrinsic meaning beyond their use for constructing potentials.
Our primary motivation is that the Bers quasiform plays a pivotal role in the description of vertex operator algebras and conformal field theories on a genus $g$ Riemann surface  \cite{TW}. Although this paper is designed to provide the necessary background for that application, we believe that our results should also be of wider interest to researchers in Riemann surface theory and its applications. 


Section 2 begins with an overview of the Schottky uniformization of a genus $g$ marked Riemann surface $\Sg$. We review the theory of Eichler cocycles,  Bers potentials and the Bers map for genus $g\ge 2$. The construction of a Bers potential for each holomorphic weight $N$ differential utilizes the  Poincar\'e series $\Psi_{N}(x,y)$  introduced by Bers \cite{Be1,Be2}. $\Psi_{N}(x,y)$ is meromorphic for $x,y\in\Sg$ with a simple pole at $x=y$,  transforms like a weight $N$ differential in $x$ but as a quasiperiodic differential of weight $1-N$  in $y$. $\Psi_{N}(x,y)$ can be thought of a generalization to $N>1$ of the classical differential of the third kind $\omega_{y-0}(x)$ e.g. \cite{Mu,Fa}.  We also discuss the non-uniqueness of the Bers quasiform and its relationship to the unique Greens function $G_{N}(x,y)$ for the anti-holomorphic part of the Poincar\'e metric compatible  connection \cite{Ma,McI,McIT}. 

In Section 3 we describe a novel expansion of any meromorphic $N$-form $H_{N}$ in terms of holomorphic $N$-forms and $y$ derivatives of $\Psi_{N}(x,y)$ with coefficients given by certain residues of $H_{N}$. This is a natural extension of genus zero and one expansions and precisely matches formal Zhu reduction formulas for vertex operator algebra correlation functions at genus $g\ge 2$ \cite{TW}.

Section 4 contains power series expansions of $\Psi_{N}$ and a canonical spanning set of holomorphic $N$ forms in terms of Schottky sewing parameters. This is a natural extension of expansion formulas for the classical bidifferential of the second kind and holomorphic 1-forms of \cite{Fa,Y,MT}. These expressions canonically arise in the context of vertex operator algebra theory \cite{TW}. We also describe a Selberg zeta function infinite product formula related to the determinant of the Laplacian acting on $N$-differentials \cite{McIT}. 

In Section 5 we specialize to the geometrically important case of the $N=2$  Bers quasiform $\Psi_{2}(x,y)$. In particular, we describe a differential operator $\nabla(x)$ which maps differentiable functions of the Schottky parameters  to the space of holomorphic 2-forms in $x$ (and naturally leads to a similar operator acting on differentiable functions on moduli space). We generalize this to a differential operator $\nabmy{\bfm}{\bfy}(x)$ that utilizes  $\Psi_{2}(x,y)$ and which maps any meromorphic form  $H_{\bfm}(\bfy)=h_{\bfm}(\bfy)dy_{1}^{m_{1}} \ldots dy_{n}^{m_{n}}$ of weight $(\bfm):=(m_{1},\ldots ,m_{n})$ in $\bfy:=y_{1},\ldots,y_{n}$ to a meromorphic form of weight $(2,\bfm)$ in $x,\bfy$. 
We illustrate this operator by describing their action on a number of classical objects such as the bidifferential of the second kind, the projective connection,  holomorphic 1-forms and the prime form. These results are obtained by alternative vertex operator algebra methods in \cite{TW}. Some of these results were also partially anticipated in ref.~\cite{O}. 
We conclude  by showing that the composition of two such differential operators (with suitable labeling) is commutative.

\noindent \textbf{Acknowledgements.} 
We wish to thank Michael Flattery and Tom Gilroy for helpful comments and suggestions.

\section{The Bers Quasiform} \label{sec:PsiN}
\subsection{Notational Conventions}
Define indexing sets for integers $g,N\ge 1$ as follows:
\begin{align}
\label{eq:ILconv}
	\I:=\{-1,\ldots ,- g,1,\ldots ,g\},\quad
	\Ip:=\{1,\ldots ,g\},\quad 
	\calL:=\{0,1,\ldots,2N-2\}.
\end{align}
For functions $f(x)$ and $g(x,y)$ and integers $i,j\ge 0$ we define
\begin{align}
\label{eq:delconv}
f^{(i)}(x):=\del_{x}^{(i)} f(x):= \frac{1}{i!}\frac{\del^{i}}{\del x^{i}}f(x),
\quad
g^{(i,j)}(x,y):=\del_{x}^{(i)} \del_{y}^{(j)}  g(x,y).
\end{align} 
Let $\Poly_{n}$ denote the space of polynomials with complex coefficients of degree $\le n$. 
\subsection{The Schottky uniformization of a Riemann surface}
\label{subsec:Schottky}
We briefly review the  construction of a genus $g$ Riemann surface $\Sg$ using the Schottky uniformization where we sew $g$ handles to the Riemann sphere $\calS^{(0)}\cong\Chat:=\C\cup\{\infty\}$ e.g. \cite{Fo,Bo}. Every Riemann surface can be Schottky uniformized \cite{Be3,Be4}.
Let $\{ \Delta_{a}\}$, for $a\in\I$, denote $2g$ non-intersecting simply connected regions in $\C$ with smooth boundary curves $ \{\Con{a}\}$. 
Identify $z'\in \Con{-a}$ with $z\in  \Con{a}$ via the Schottky sewing cross ratio relation
\begin{align}
\label{eq:Schottkysew}
\frac{z'-W_{-a}}{z'-W_{a}}\,\frac{z-W_{a}}{z-W_{-a}}=q_{a},\quad a\in\Ip,
\end{align}
for complex $q_{a}$ with $0<|q_{a}|<1$ and some $W_{\pm a}\in\Chat$. Thus  $z'=\gamma_{a}z$  for $a\in\Ip$ for  M\"obius transformation generated by $\gamma_{a}\in \SL_{2}(\C)$  where
\begin{align}\label{eq:gammaa}
\gamma_{a}:=\sigma_{a}^{-1}
\begin{pmatrix}
q_{a}^{1/2} &0\\
0 &q_{a}^{-1/2}
\end{pmatrix}
\sigma_{a},\quad 
\sigma_{a}=(W_{-a}-W_{a})^{-1/2}\begin{pmatrix}
1 & -W_{-a}\\
1 & -W_{a}
\end{pmatrix}.
\end{align}
$\gamma_{a}$ is loxodromic with attracting fixed point $W_{-a}\in\Delta_{-a}$ and repelling fixed point $W_{a}\in\Delta_{a}$.
 $\gamma_{a}$ maps the interior/exterior of $\Delta_{a}$ 
 to the exterior/interior of $\Delta_{-a}$.

The marked Schottky group $\Gamma\subset \SL_{2}(\C)$  is the free discrete group of M\"obius transformations generated by $\gamma_{a}$. 
Let $\Lambda(\Gamma)$ denote the  limit set for $\Gamma$.
Let $\Omo=\Chat-\Lambda(\Gamma)$ denote the region of
discontinuity. 
Then  $\Sg\simeq \Omo/\Gamma$, a Riemann surface of genus $g$. 
We let $\D = \Chat/ \cup_{a\in\I}\Delta_{a}$ denote the standard connected fundamental region with oriented boundary curves $\Con{a}$. 
We further identify the standard homology cycle $\alpha_{a}$  with $ \Con{-a}$ and the cycle $\beta_{a}$ with a path connecting  $z\in  \Con{a}$ to $z'=\gamma_{a}z\in  \Con{-a}$. 

We define the space of Schottky parameters  $\Cg \subset \C^{3g}$ by
\begin{align}
	\label{eq:Cfrakg}
	\Cg:=\left\{ (W_{1},W_{-1},q_{1},\ldots,W_{g},W_{-g},q_{g}): \Delta_{a}\cap\Delta_{b}=\O\; \forall \;a\neq b\right\}.
\end{align}
The cross ratio \eqref{eq:Schottkysew} is M\"obius  invariant with $(z,z',W_{a},q_{a})\mapsto(\gamma z,\gamma z',\gamma W_{a},q_{a})$ for $\gamma\in\SL_{2}(\C)$.  We define Schottky space by $\Schg=\Cg/\SL_{2}(\C)$. $\Schg$ is a covering space for the $3g-3$ dimensional moduli space $\mathcal{M}_{g}$ of genus $g$ Riemann surfaces e.g. \cite{Be3}.

It is convenient to introduce alternative parameters for $\Cg$ as follows. Define $\gamma_{-a}=\gamma_{a}^{-1} $ so that $\gamma_{a} \Con{a}= -\Con{-a}$ for all $a\in\I$. 
Let $
w_{a}:=\gamma_{-a}.\infty\in\Delta_{a}$. Hence $
\frac{w_{a}-W_{a}}{w_{a}-W_{-a}}=q_{a}
$
which implies
\begin{align}
\label{eq:wadef}
w_{a} ={\frac {W_{{a}}-q_{{a}}W_{{-a}}}{1-q_{{a}}}},\quad a\in\I.
\end{align} 
We therefore have
\begin{align}
\label{eq:gamaz}
\gamma_{a}z = w_{-a}+\frac{\rho_{a}}{z-w_{a}},
\end{align}  
where $\rho_{a}=\rho_{-a}$ is 
determined from the condition $\gamma_{a}W_{a}=W_{a}$ to be
\begin{align}
\label{eq:rhoadef}
\rho_{a}=-{\frac {q_{{a}} \left( W_{{a}}-W_{{-a}} \right) ^{2}}{\left(1- q_{{a}} \right) ^{2}}}.
\end{align} 
Hence \eqref{eq:Schottkysew} can be written in the more convenient form: 
\begin{align}
\label{eq:Cansew}
(z'-w_{-a})(z-w_{a})=\rho_{a}.
\end{align}
The M\"obius  action of $\gamma =\left(\begin{smallmatrix}A&B\\C&D\end{smallmatrix}\right)
$ on the Schottky parameters $(w_{a},\rho_{a})$  is given by
\begin{align}
	\label{eq:Mobwrhoa}
	\gamma:(w_{a},\rho_{a})\mapsto & 
	\left(	\frac { \left( Aw_{a}+B \right)  \left( Cw_{-a}+D \right) -\rho_{a}
		\,AC}{ \left( Cw_{a}+D \right)  \left( Cw_{-a}+D \right) -\rho_{a}\,{
			C}^{2}},
	{\frac {\rho_{a}}{ \left(  \left( Cw_{a}+D \right)  \left( Cw_{-a}+D
			\right) -\rho_{a}\,{C}^{2} \right) ^{2}}}\right).
\end{align}

\subsection{Holomorphic  differentials ${\mathcal H}^{(g)}_{N}$}
Let ${\mathcal A} _{m,n}$ be the space of smooth differentials of weight $(m,n)$,  for $m,n\in\Z$,
of the form $\Phi(z)=\phi(z) dz^{m}d\zbar^{n}$ for local coordinate $z$ on $\Sg$ e.g.
the Poincar\'e  metric\footnote{$\PM$ is induced from the Poincar\'e metric $y^{-2}d\zeta d\overline{\zeta}$ on  $\HH=\{\zeta=x+ \im\,y|x,y\in\R,\, y>0\}$ by uniformization $\Sg$ as a quotient of $\HH$ by an appropriate Fuchsian subgroup of $ \SL_{2}(\R)$.} 
$\PM(z)=\pmet(z)dz d\zbar \in{\mathcal A} _{1,1}$ for real positive $\pmet(z)$. $\PM$ determines the  positive definite Petersson inner product  for $\Phi,\Theta \in {\mathcal A} _{m,n}$ defined by
\begin{align}
\label{eq:Pprod}
\langle \Phi,\Theta \rangle   
:= \iint_{\Sg}\Phi\,
\overline{\Theta}\,\PM^{-m-n}\vol, 
\end{align} 
for real volume form $
\vol(z) :=\frac{\im}{2}\pmet(z) dz\wedge d\zbar$.

Let $\mathfrak{H}^{(g)}_{m,n}$ denote the $L^2$-closure of ${\mathcal A} _{m,n}$ with respect to the Petersson product and let $\mathfrak{H}^{(g)}_{n}=\mathfrak{H}^{(g)}_{n,0}$.
Lastly, let $\HgN\subset \mathfrak{H}^{(g)}_{N}$ denote the space of genus $g$ \emph{holomorphic} $N$-differentials of integral weight $N$.
The Riemann-Roch theorem (e.g. \cite{FK,Bo}) determines $d_{N}=\dim{\HgN}$ as follows:  
\begin{center}
	\begin{tabular}{|c|c|c|}
		\hline
		$g$  & $N\in\Z$ & $ d_{N}$ \\
		\hline
		$g=0$ & $N\le 0$ & $1-2N$\\
	 & $N> 0$ & $0$ \\
	 \hline
	$g=1$ & all $N$ & $1$ \\
	 \hline
	$g\ge 2$ & $N< 0$ & $0$ \\
	 & $N=0$ & $1$ \\
	 & $N=1$ & $g$  \\
	 & $N\ge 2$ & $(g-1)(2N-1)$ \\
	 \hline
	\end{tabular}
~\vskip 0.2 cm~
\end{center} 
In the Schottky uniformization, $\Phi(z)=\phi(z) dz^{m}d\zbar^{n}\in {\mathcal A} _{m,n}$
for $z\in\Omo$ satisfies
\begin{align*}
\Phi|_{\gamma}=\Phi,
\end{align*}
for all $\gamma\in \Gamma$ where $\Phi|_{\gamma}(z):=\phi(\gamma z) d(\gamma z)^md(\overline{\gamma z})^n$. The Petersson product is expressed as an integral over 
the Schottky fundamental region $\D$ e.g. \cite{Be1, McIT}.

\subsection{Eichler cocycles, Bers potentials and the Bers quasiform}
We review the relationship between $\HON$ and $\HgN$ in the Schottky scheme for all $N\ge 2$  and $g\ge 2$ as developed by Bers \cite{Be1,Be2,K}.
$\HON$ consists of elements
 $P(z)=p(z)dz^{1-N}$ for  $z\in \Chat$ and $p\in \Poly_{2N-2}$.
There is a natural M\"obius action on $\HON$ given by 
\begin{align}
\label{eq:MobPi}
P|_\gamma (z):=P (\gamma z),
\end{align}
for $\gamma\in\SL_{2}(\C)$ with $P|_{\gamma\lambda}=P|_{\gamma}|_{\lambda}$ for all $\gamma,\lambda\in\SL_{2}(\C)$.

Let $\Gamma$ be a Schottky group for a Riemann surface of genus $g\ge 2$.
Let $\Ei$ denote the vector space of Eichler 1-cocycles\footnote{We define  a 1-cocycle  as a mapping to $\HON$ rather than to $\Poly_{2N-2}$ as in refs.~\cite{Be1,K,G}.} for $\Gamma$ given by mappings of the form $\Xi:\Gamma\rightarrow \HON$ such that for all $\gamma,\lambda\in\Gamma$
\begin{align}
\label{eq:cocycle}
\Xi[\gamma\lambda]=\Xi[\gamma]|_{\lambda}+\Xi[\lambda],
\end{align} 
for M\"obius action \eqref{eq:MobPi}. We note that \eqref{eq:cocycle} implies $\Xi[\id]=0$ so that 
$\Xi[\gamma]=-\Xi[\gamma^{-1}]|_{\gamma}$.
Let $\Bound\subset \Ei$ denote the  space of coboundaries  
$\Xi_{P}:\Gamma\rightarrow \HON$ for $P\in \HON$ defined by
\begin{align}
\label{eq:cobound}
\Xi_{P}[\gamma]:=P|_{\gamma}-P.
\end{align}
$\Xi_{P}$ is a 1-cocycle since $P|_{\gamma}|_{\lambda}-P= \left(P|_{\gamma}-P\right)|_{\lambda}+P|_{\lambda}-P$. It is easy to show that \cite{Be1}  
\begin{lemma}\label{lem:dimcobound}
$\Bound\simeq \HON$ as vector spaces.
\end{lemma}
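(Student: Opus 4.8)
The plan is to show that the coboundary map $\delta\colon \HON\to\Bound$, $\delta(P):=\Xi_{P}$, is a linear isomorphism. Linearity is immediate from the definition \eqref{eq:cobound} together with the fact that the M\"obius action \eqref{eq:MobPi} is linear in $P$, while surjectivity holds by construction, since $\Bound$ is defined to be the image of $\delta$. Thus everything reduces to proving that $\delta$ is injective, i.e.\ that its kernel is trivial. By \eqref{eq:cobound} we have $\Xi_{P}=0$ precisely when $P|_{\gamma}=P$ for all $\gamma\in\Gamma$, so $\ker\delta$ is exactly the space of $\Gamma$-invariant elements of $\HON$, and it suffices to prove that the only such invariant is $P=0$.

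To analyze the invariants I would first note that it is enough to impose invariance under the generators $\gamma_{a}$, $a\in\Ip$, and that each $\gamma_{a}$ is loxodromic, hence diagonalized by $\sigma_{a}$ as in \eqref{eq:gammaa}. Writing $P(z)=p(z)\,dz^{1-N}$ with $p\in\Poly_{2N-2}$ and conjugating the action \eqref{eq:MobPi} by $\sigma_{a}$, one is reduced to the diagonal $\SL_{2}(\C)$-action under which the monomial $z^{k}$ is an eigenvector with eigenvalue $q_{a}^{\,k-N+1}$ for $k=0,\ldots,2N-2$. Since $0<|q_{a}|<1$, this eigenvalue equals $1$ only for $k=N-1$, so the $\gamma_{a}$-invariant subspace of $\HON$ is the single line spanned, back in the original coordinate, by $(z-W_{a})^{N-1}(z-W_{-a})^{N-1}\,dz^{1-N}$.

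Finally, because $g\ge2$ there are at least two generators $\gamma_{a},\gamma_{b}$ with $a\ne b$, whose fixed-point sets $\{W_{a},W_{-a}\}$ and $\{W_{b},W_{-b}\}$ are disjoint (the regions $\Delta_{c}$ being pairwise disjoint). The two invariant polynomials above then have distinct zero sets, hence are linearly independent, so the two invariant lines meet only in $0$; this forces $P=0$ and establishes injectivity. I expect the only genuinely computational point to be the eigenvalue calculation after diagonalization---keeping careful track of the automorphy factor $(cz+d)^{2N-2}$ arising in the transformed action---together with the clean identification of the invariant line with the product of the fixed-point factors each raised to the power $N-1$; once this is in place the two-generator argument closes the proof immediately.
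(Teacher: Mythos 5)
Your argument is correct. Note that the paper itself supplies no proof of this lemma at all --- it is stated with the remark ``it is easy to show'' and a citation to Bers --- so there is nothing internal to compare against; what you have written is the standard argument from the Eichler cohomology literature. The reduction to injectivity of $P\mapsto \Xi_{P}$ is right (surjectivity and linearity are immediate), the diagonalization of the loxodromic generator $\gamma_{a}$ via $\sigma_{a}$ does give eigenvalue $q_{a}^{\,k-N+1}$ on $z^{k}dz^{1-N}$ from the factor $(\gamma_a'(z))^{1-N}$, and since $0<|q_{a}|<1$ the $\gamma_{a}$-invariant line is indeed spanned by $(z-W_{a})^{N-1}(z-W_{-a})^{N-1}dz^{1-N}$. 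The disjointness of the fixed-point sets for distinct generators (which follows from $W_{\pm a}\in\Delta_{\pm a}$ and the pairwise disjointness of the $\Delta_{c}$) then forces the intersection of the two invariant lines to be trivial. One small point worth making explicit: your closing step genuinely requires $N\ge 2$, since for $N=1$ the invariant ``polynomial'' is the constant $1$ for every generator and the two lines coincide (indeed the lemma is false for $N=1$, where $\Bound=0$ while $\HON=\C$). This is harmless here because $N\ge 2$ is the standing hypothesis of the subsection, but it deserves a sentence.
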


Let $\COH:=\Ei/\Bound$ be the cohomology space of Eichler cocycles modulo  coboundaries.
\begin{lemma}{\cite{Be1}}\label{lem:dimH}
$\dim \COH =(g-1)(2N-1)$. 
\end{lemma}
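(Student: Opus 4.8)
The plan is to compute $\dim\COH=\dim\Ei-\dim\Bound$ by pinning down each term separately, the decisive structural input being that a Schottky group $\Gamma$ is \emph{free} of rank $g$ on the generators $\gamma_{1},\ldots,\gamma_{g}$. The dimension of $\Bound$ is already in hand: by Lemma~\ref{lem:dimcobound} we have $\Bound\simeq\HON$, and since $\HON$ consists of the forms $p(z)\,dz^{1-N}$ with $p\in\Poly_{2N-2}$ we read off $\dim\Bound=\dim\HON=2N-1$. It therefore remains to show $\dim\Ei=g(2N-1)$, after which the conclusion is immediate from the identity $(g-1)(2N-1)=g(2N-1)-(2N-1)$.

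First I would establish the linear isomorphism
\begin{align*}
\Ei\;\xrightarrow{\ \sim\ }\;\HON^{\oplus g},\qquad
\Xi\longmapsto\bigl(\Xi[\gamma_{1}],\ldots,\Xi[\gamma_{g}]\bigr).
\end{align*}
Injectivity is the easy direction: the cocycle relation \eqref{eq:cocycle} together with the derived identity $\Xi[\gamma]=-\Xi[\gamma^{-1}]|_{\gamma}$ allows one to compute $\Xi$ on any reduced word $\gamma_{i_{1}}^{\epsilon_{1}}\cdots\gamma_{i_{k}}^{\epsilon_{k}}$ recursively from its values on the generators, so two cocycles agreeing on $\gamma_{1},\ldots,\gamma_{g}$ coincide on all of $\Gamma$. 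For surjectivity I would, given an arbitrary tuple $(v_{1},\ldots,v_{g})\in\HON^{\oplus g}$, define $\Xi$ on reduced words by exactly this recursion and verify that the result satisfies \eqref{eq:cocycle} for every pair $\gamma,\lambda\in\Gamma$.

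The main obstacle is precisely this well-definedness for surjectivity: one must check that the recursive assignment is consistent under the only relations present in $\Gamma$, namely the free cancellations $\gamma_{a}\gamma_{a}^{-1}=\id$, and that the extended $\Xi$ obeys the crossed-homomorphism identity \eqref{eq:cocycle} for all pairs and not merely for generators. Because $\Gamma$ is free there are no further relations to obstruct consistency; equivalently, this is the statement that a free group has cohomological dimension one, so that evaluation on a free basis yields $\Ei\cong\HON^{\oplus g}$. I would carry this out by induction on reduced word length, splitting a long word via \eqref{eq:cocycle} and applying the inductive hypothesis, the one delicate point being to confirm that reducing a concatenated word before or after applying the recursion produces the same element of $\HON$ (this uses only $\Xi[\id]=0$ and the inverse identity). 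Granting $\dim\Ei=g(2N-1)$ and $\dim\Bound=2N-1$, the quotient $\COH=\Ei/\Bound$ has dimension $(g-1)(2N-1)$, as claimed.
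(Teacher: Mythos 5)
Your proposal is correct and follows essentially the same route as the paper: use freeness of $\Gamma$ on $\gamma_{1},\ldots,\gamma_{g}$ to identify $\Ei\simeq\HON^{\oplus g}$ via evaluation on generators, giving $\dim\Ei=g(2N-1)$, then subtract $\dim\Bound=\dim\HON=2N-1$ from Lemma~\ref{lem:dimcobound}. The only difference is that you spell out the surjectivity of the evaluation map (well-definedness of the recursive extension to reduced words), which the paper leaves implicit in the phrase ``a cocycle is determined by its evaluation on $\gamma_{a}$.''
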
 
\begin{proof}
Since $\Gamma$ is freely generated by $\{\gamma_{a}\}$ for $a\in\Ip$, a cocycle $\Xi$ is determined by its evaluation on $\gamma_{a}$. Thus 
\[
\dim\Ei=g\dim\HON=g(2N-1). 
\]
The result follows  on applying Lemma~\ref{lem:dimcobound}.
\end{proof}
Note that $\dim\COH=\dim\HgN$ for $N\ge 2$ and $g\ge 2$. The Bers map $\beta$ discussed below describes  a bijection between these spaces. 
\medskip

$F(y)=f(y)dy^{1-N}$ for $f(y)$ continuous for $y\in\Omo$  is called a Bers potential for a holomorphic $N$-form $ \Phi =\phi (y)dy^{N}\in \HgN$ provided $f(y)$ satisfies\footnote{We include a factor of $\frac{1}{\pi}$ in comparison to \cite{Be1, McIT} for later convenience.}
\begin{align}
\label{eq:dF}
&\frac{1}{\pi}\dyb f=\overline{\phi (y)}\,\pmet(y)^{1-N},
\\
&\lim_{y\rightarrow 0}\left\vert y ^{2N-2}f\left(y^{-1}\right)\right\vert <\infty, 
\label{eq:Flim}
\end{align} 
for Poincar\'e metric $\PM(z)=\pmet(z)dz d\zbar$.  We note  $F(y)+P(y)$ is a potential for $\Phi$ any $P\in\HON$. 
Define the generalized Beltrami differential associated with $\Phi\in\HgN$ by 
\begin{align}
	\label{eq:Beldef}
B_{\Phi}:=\overline{\Phi}\,\PM^{-N}\vol \in\calA_{1-N,1},
\end{align} 
for volume form $\vol$.
$B_{\Phi}$ is an element of the dual space of $\HgN$ with respect to the canonical pairing
\begin{align}
	\label{eq:GenBel}
	\iint_{\D}\Theta B_{\Phi}=\langle \Theta,\Phi\rangle,\quad \Theta \in\HgN.
\end{align}
Then \eqref{eq:dF} can be written in the following coordinate-free way 
\begin{align}
\label{eq:dPhiF}
\frac{1}{\tpi}d \left(\Theta  F\right)=\Theta \,B_{\Phi},
\end{align} 
for all $\Theta \in\HgN$ with exterior derivative $d\left(h(y)dy\right)=-\dyb h \,dy\wedge d\ybar$.
\eqref{eq:Flim} ensures that $F(y)$ is defined at the point at infinity.
For $N=2$, $B_{\Phi}$ is the classical Beltrami differential associated with quasi-conformal maps where \eqref{eq:dF} is related to the Beltrami equation. This is explored further in Section~\ref{sec:Moduli}.

 We now define the Bers quasiform $\Psi_{N}(x,y)$ which is used to establish  the existence of a Bers potential for each $\Phi \in \HgN$ for $N\ge 2$ and $g\ge 2$  \cite{Be1,Be2,Be3,Be4,K,G}. 
The Bers quasiform is defined by the following Poincar\'e series 
 \begin{align}
 	\label{eq:PsiNdef}
 	\Psi_{N}(x,y):=\sum_{\gamma\in\Gamma} 
\Pi_{N}(\gamma x,y),\quad x,y\in \Omo,
 \end{align}
where
\begin{align}
	\label{eq:PiNdef}
	\Pi_{N}(x,y)&:=\pi_{N}(x,y)dx^{N}dy^{1-N},\quad 
	\pi_{N}(x,y):=\frac{1}{x-y}\prod_{j\in\calL}\frac{y-A_{j}}{x-A_{j}},
\end{align}
and where $\{A_{j}\}$ for $j\in \calL$, the indexing set  \eqref{eq:ILconv}, are distinct points in the limit set $\Lambda(\Gamma)$. 
 $\Psi_{N}(x,y)$ is meromorphic  in $x,y\in \Omo$ with a simple pole  of residue one at $y=\gamma x$ for all $\gamma\in\Gamma$  \cite{Be1}. 
 
 $\Psi_{N}(x,y)$ is a bidifferential $(N,1-N)$-quasiform with respect to the Schottky group as follows.
 By construction, $\Psi_{N}(x,y)$ is an $N$-differential in $x$ so that
 \begin{align}
 	\Psi_{N}(\gamma x,y) &= \Psi_{N}(x,y),\quad \gamma\in\Gamma,
 	\label{eq:GEM3rsx}
 \end{align}
 whereas $\Psi_{N}(x,y)$ is a quasiperiodic $1-N$ differential in $y$ with  
 \begin{align}\label{eq:PsiNdefgamy}
 	\Psi_{N}( x, \gamma y) =
 	\Psi_{N}(x,y)+\chi_{N}[\gamma](x,y),\quad \gamma\in\Gamma,
 \end{align}
 where $\chi_{N}[\gamma](x,y)$ is holomorphic for $x,y\in\Omo$ with an explicit form shown below in \eqref{eq:chiPhidual} and \eqref{eq:chiTheta}.
 
 We may also define a symmetric bidifferential form  $\omega_{N}(x,y)$  of weight $(N,N)$  by
 \begin{align}
 	\label{eq:Lambda}
 	\omega_{N}(x,y):=&\Psi_{N}^{(0,2N-1)}(x,y)dy^{2N-1}
 	=\sum_{\gamma \in\Gamma}M_{N}(\gamma x, y),
 \end{align}
 recalling convention \eqref{eq:delconv} and where $M_{N}(x,y):=\frac{dx^{N}dy^{N}}{(x-y)^{2N}}$. $\omega_{N}(x,y)$ generalizes the classical bidifferential  $\omega(x,y)=\omega_{1}(x,y)$  of weight $(1,1)$ e.g. \cite{Mu,Fa}.

 For a given $N$-differential $ \Phi  \in \HgN$ and $y\in\Omo$, then 
 \begin{align}\label{eq:FPsi}
 	F(y)=-\iint_{\D}\Psi_{N}(\cdot,y)\,B_{\Phi}
 	=-\langle \Psi_{N}(\cdot,y), \Phi \rangle.
 \end{align}
satisfies \eqref{eq:dF} and \eqref{eq:Flim} \cite{Be1}. Thus we find 
 \begin{proposition}{\cite{Be1}}
 	There exists a Bers potential  for each $ \Phi  \in \HgN$.
\end{proposition}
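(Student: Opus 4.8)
The plan is to take $F(y)$ to be exactly the function displayed in \eqref{eq:FPsi}, namely $F(y)=-\iint_{\D}\Psi_{N}(\cdot,y)\,B_{\Phi}=-\langle \Psi_{N}(\cdot,y),\Phi\rangle$, and to verify directly that it satisfies the two defining conditions \eqref{eq:dF} and \eqref{eq:Flim}. First I would check that $F$ is well defined and continuous in $y\in\Omo$: for fixed $y$ the integrand $\Psi_{N}(x,y)\,B_{\Phi}(x)$ is a smooth $(1,1)$-form in $x$ on $\D$ away from the diagonal, while at the single point $x=y\in\D$ the kernel has only the simple pole $\pi_{N}(x,y)\sim (x-y)^{-1}$, an integrable singularity in two real dimensions. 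Here I would borrow from Bers the convergence of the defining Poincar\'e series \eqref{eq:PsiNdef} for $N\ge 2$, and note that because the points $A_{j}\in\Lambda(\Gamma)$ lie off $\Omo$, the factors $(x-A_{j})^{-1}$ in \eqref{eq:PiNdef} introduce no further singularities for $x\in\D$.

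The heart of the argument is the $\dyb$-equation \eqref{eq:dF}. I would differentiate under the integral sign and use that, for fixed $x$, the quasiform $\Psi_{N}(x,y)$ is meromorphic in $y$, so that $\dyb\Psi_{N}$ vanishes off the poles and, as a distribution in $y$, is a sum of two-dimensional delta functions located at $y=\gamma x$ for $\gamma\in\Gamma$, arising from the unit-residue poles of $\pi_{N}$. The local identity $\dyb\frac{1}{x-y}=-\pi\,\delta^{(2)}(x-y)$ is the key input. Integrating against $B_{\Phi}$ over $x\in\D$, the delta at $y=\gamma x$ fires inside the region only when $\gamma^{-1}y\in\D$; for $y$ in the interior of the fundamental region this forces $\gamma=\id$. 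The one surviving term collapses the integral to its value at $x=y$; tracking weights, the $dx^{N}$ of $\Psi_{N}$ cancels the $dx^{-N}$ in $B_{\Phi}=\overline{\Phi}\,\PM^{-N}\vol$, leaving the scalar density $\overline{\phi(x)}\,\pmet(x)^{1-N}$ paired against the area element, and the overall sign from the minus in \eqref{eq:FPsi} combined with $\dyb\frac{1}{x-y}=-\pi\,\delta^{(2)}$ yields $\tfrac1\pi\dyb f=\overline{\phi(y)}\,\pmet(y)^{1-N}$. This is \eqref{eq:dF}, equivalently the coordinate-free statement \eqref{eq:dPhiF} tested against $\HgN$.

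Finally I would verify the behaviour at infinity \eqref{eq:Flim}, where the design of the kernel \eqref{eq:PiNdef} is decisive. The numerator $\prod_{j\in\calL}(y-A_{j})$ is a polynomial of degree $|\calL|=2N-1$ in $y$, which the simple pole $(x-y)^{-1}$ reduces by one, so each term grows like $y^{2N-2}$ as $y\to\infty$. Hence $f(y)=O(y^{2N-2})$, equivalently $|y^{2N-2}f(y^{-1})|$ is bounded as $y\to 0$, which is precisely \eqref{eq:Flim}; in differential terms this is the statement that $F$ extends to a weight $(1-N)$ form at the point at infinity. The point needing care is that this growth bound survive the summation over $\Gamma$ and the integration over $\D$ uniformly, for which I would again invoke the Poincar\'e series estimates.

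I expect the main obstacle to be the rigorous justification of the distributional step in the second paragraph: interchanging $\dyb$ with both the integral over $\D$ and the sum over $\Gamma$ in the presence of the integrable diagonal pole of $\Psi_{N}$. The clean way to handle this is to regularize by excising an $\varepsilon$-disk $\{|x-y|<\varepsilon\}$, apply Stokes' theorem to convert the area integral of $\dyb$ into a contour integral around the puncture whose limit recovers the residue contribution $\overline{\phi(y)}\,\pmet(y)^{1-N}$, and let $\varepsilon\to 0$; on the complement the integrand is genuinely $\dyb$-closed in $y$, so the off-diagonal part and every $\gamma\neq\id$ term contribute nothing.
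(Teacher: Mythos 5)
Your proposal is correct and follows exactly the paper's route: the paper likewise takes $F(y)=-\langle \Psi_{N}(\cdot,y),\Phi\rangle$ as in \eqref{eq:FPsi} and simply cites Bers for the verification of \eqref{eq:dF} and \eqref{eq:Flim}, which you sketch via the distributional identity $\dyb(x-y)^{-1}=-\pi\,\delta^{(2)}(x-y)$ and the degree count on $\prod_{j\in\calL}(y-A_{j})$. Your closing caveat about justifying the interchange of $\dyb$ with the sum over $\Gamma$ and the integral over $\D$ is precisely the content delegated to \cite{Be1}.
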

 Let $\calF_{N}$ denote the vector space of  Bers potentials for $\HgN$. 
\begin{lemma}{\cite{Be1}}\label{lem:ZeroPot}
$F\in\FN$ is a  Bers potential for $\Phi =0$ iff  $F\in\HON$.
\end{lemma}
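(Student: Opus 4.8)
The plan is to prove both implications, the reverse being immediate from the definitions and the forward one reducing, via a cohomological descent to $\Sg$, to the vanishing of holomorphic differentials of negative weight. For the reverse implication, suppose $F=P\in\HON$, say $P=p(y)\,dy^{1-N}$ with $p\in\Poly_{2N-2}$. Since $p$ is holomorphic we have $\dyb p=0$, so \eqref{eq:dF} holds with $\phi=0$, i.e.\ $\Phi=0$; moreover $y^{2N-2}p(y^{-1})$ is a polynomial in $y$ of degree $\le 2N-2$, hence bounded as $y\to 0$, so \eqref{eq:Flim} holds. Thus $P$ is a Bers potential for $\Phi=0$ and $P\in\FN$.

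For the forward implication, let $F=f(y)\,dy^{1-N}\in\FN$ be a potential for $\Phi=0$. Then \eqref{eq:dF} gives $\dyb f=0$, so $f$ is holomorphic on $\Omo$. As each $\gamma\in\Gamma$ acts by a M\"obius map with holomorphic derivative, $f|_\gamma$ is again holomorphic, and therefore so is each value of $\Xi_F[\gamma]:=F|_\gamma-F$; invoking \eqref{eq:Flim} as in Bers' analysis upgrades holomorphy on $\Omo$ to membership in $\HON$, so that $\Xi_F\in\Ei$ is an Eichler cocycle.

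The decisive step is to show that $\Xi_F$ is a coboundary, $\Xi_F=\Xi_P$ for some $P\in\HON$. Granting this, $(F-P)|_\gamma=F-P$ for all $\gamma\in\Gamma$, so $F-P$ is $\Gamma$-invariant and holomorphic on $\Omo$, whence it descends to a global holomorphic $(1-N)$-differential on $\Sg=\Omo/\Gamma$; since $g\ge 2$ and $N\ge 2$ force $1-N\le -1$, the Riemann--Roch table gives $\dim\Hg_{1-N}=0$, so $F-P=0$ and $F=P\in\HON$, as required. The obstacle is exactly this coboundary step: $f$ is controlled only on the region of discontinuity $\Omo$, and since the limit set $\Lambda(\Gamma)$ is excluded one cannot appeal to a naive removable-singularity theorem. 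I would resolve it by the order-$(2N-1)$ Bol intertwiner: the form $H_N=\del_y^{2N-1}f\,dy^N$ is $\Gamma$-invariant and holomorphic, and \eqref{eq:Flim} shows it is regular at infinity, so $H_N\in\HgN$ with Eichler class $[\Xi_F]$. Single-valuedness of $f$ on $\Omo$ makes the $\alpha$-periods of this class vanish, and a Stokes/Petersson computation on $\D$ (using \eqref{eq:dPhiF} with $\Phi=0$, so that $\Theta F$ is closed for every $\Theta\in\HgN$) then forces $H_N=0$, equivalently $[\Xi_F]=0$; note $H_N=0$ also directly yields $\del_y^{2N-1}f=0$ on the connected set $\Omo$, hence $f\in\Poly_{2N-2}$. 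This vanishing---in essence the injectivity of the Eichler--Shimura period map, where the curvature of the Poincar\'e metric and the growth condition are indispensable---is the genuine difficulty; once it is in hand the remaining steps are formal.
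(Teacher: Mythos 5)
The paper itself gives no proof of this lemma; it is quoted from Bers \cite{Be1}, so your argument must stand on its own. The reverse implication is fine. In the forward direction you have correctly located the difficulty --- the behaviour of $f$ near the limit set $\Lambda(\Gamma)$ --- but your proposed resolution does not resolve it. Every step of your cohomological route presupposes that $\Xi_F[\gamma]=F|_\gamma-F$ takes values in $\HON$, i.e.\ that $f(\gamma y)(\gamma' y)^{1-N}-f(y)$ is a polynomial of degree $\le 2N-2$; the $\Gamma$-invariance of your Bol image $H_N=\del_y^{2N-1}f\,dy^N$ is exactly equivalent to this polynomiality, since $H_N|_\gamma-H_N$ is $\del_y^{2N-1}$ applied to the coefficient of $\Xi_F[\gamma]$. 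But that polynomiality is the entire analytic content of the lemma: condition \eqref{eq:Flim} controls $f$ only at $\infty$ and says nothing near $\Lambda(\Gamma)$, so ``invoking \eqref{eq:Flim} as in Bers' analysis'' at this point assumes what must be proved. Indeed, with the definition of potential as literally written in the paper ($f$ continuous and \eqref{eq:dF} imposed only on $\Omo$), the forward implication is false: $f(y)=(y-\lambda_0)^{-1}$ with $\lambda_0\in\Lambda(\Gamma)$ is holomorphic on $\Omo$ and satisfies \eqref{eq:Flim}, yet lies in no $\Poly_{2N-2}$. Hence no argument confined to $\Omo$ can close the gap.

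The missing ingredient is global: in Bers' setting the potential is continuous on all of $\C$ and satisfies $\dyb f=\pi\,\overline{\phi}\,\pmet^{1-N}$ in the distributional sense on $\C$ (the limit set of a finitely generated Schottky group has zero area, so the right-hand side is unambiguous). With $\Phi=0$ this makes $f$ entire by Weyl's lemma, and \eqref{eq:Flim} says $f(y)=O(|y|^{2N-2})$ as $y\to\infty$, so $f\in\Poly_{2N-2}$ by the generalized Liouville theorem --- two lines, with no Eichler cohomology and no Bol operator. For what it is worth, once the cocycle property $\Xi_F[\gamma]\in\HON$ is granted, the remainder of your route does work and is not circular: $d(\Theta F)=0$ gives $\sum_{a\in\Ip}\oint_{\Con{a}}\Theta\,\Xi_F[\gamma_a]=0$ for all $\Theta\in\HgN$, the pairing of Corollary~\ref{cor:PhiCont} together with Lemma~\ref{lem:dimH} is non-degenerate on $\HgN\times\COH$, so $\Xi_F$ is a coboundary and $F-P$ descends to an element of $\mathcal{H}^{(g)}_{1-N}=0$. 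But this is a long detour around a point that the correct global definition of a potential renders immediate.
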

For $F\in\FN$ define for $\gamma\in\Gamma$ 
\begin{align}
\Xi_{F}[\gamma]:=F|_{\gamma}-F,
\label{eq:xiF}
\end{align}
where $F|_\gamma (y):=F(\gamma y)$.
We then find
\begin{lemma}\label{lem:xiF_cocycle}
$\Xi_{F}$ is an Eichler 1-cocycle for each  $F\in\FN$. 
\end{lemma}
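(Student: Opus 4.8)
The cocycle identity \eqref{eq:cocycle} for $\Xi_F$ is purely formal once one knows that $\Xi_F$ is $\HON$-valued, so the plan splits into two parts: first establish $\Xi_F[\gamma]=F|_\gamma-F\in\HON$ for each $\gamma\in\Gamma$, and then read off the cocycle relation by the same algebra used for the coboundaries $\Xi_P$. For the substantive first part the route I would take is to show that $F|_\gamma$ is again a Bers potential for the \emph{same} $\Phi\in\HgN$; then $F|_\gamma-F$ is a Bers potential for the zero differential, and Lemma~\ref{lem:ZeroPot} immediately forces it into $\HON$.

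The heart of the matter is checking that $F|_\gamma$ satisfies \eqref{eq:dF}. Writing $F=f(y)\,dy^{1-N}$ and $F|_\gamma(y)=f(\gamma y)(\gamma'y)^{1-N}dy^{1-N}=:\tilde f(y)\,dy^{1-N}$, where $\gamma'y=\tfrac{d(\gamma y)}{dy}$ is the holomorphic derivative of the M\"obius map, the factor $(\gamma'y)^{1-N}$ is holomorphic in $y$, so $\dyb$ passes through it and the anti-holomorphic chain rule gives $\dyb\tilde f=(\gamma'y)^{1-N}\,\overline{\gamma'y}\,(\dzb f)(\gamma y)$. I would then substitute the potential equation \eqref{eq:dF} for $(\dzb f)(\gamma y)$ and feed in the two invariances that hold because $\Phi$ and the Poincar\'e metric descend to $\Sg$: the $\Gamma$-invariance of $\Phi=\phi\,dz^{N}$, i.e.\ $\phi(\gamma y)=\phi(y)(\gamma'y)^{-N}$, and of $\PM=\pmet\,dz\,d\zbar$, i.e.\ $\pmet(\gamma y)=\pmet(y)\,|\gamma'y|^{-2}$. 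Collecting the powers of $\gamma'y$ and of $\overline{\gamma'y}$, all the holomorphic Jacobian factors cancel against one another and all the conjugate factors coming from $\overline\phi$ and $\pmet^{1-N}$ cancel as well, leaving exactly $\tfrac1\pi\dyb\tilde f=\overline{\phi(y)}\,\pmet(y)^{1-N}$. This exact balancing of the weight-$(1-N)$ holomorphic factors against the conjugate factors is the one delicate point and the step most in need of care.

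For $F|_\gamma$ to qualify as an element of $\FN$ I would also confirm the regularity condition \eqref{eq:Flim}. Reading \eqref{eq:Flim} as the statement that the $(1-N)$-form $F$ is regular at the point $\infty\in\Chat$, and noting that $\gamma$ is a biholomorphism of $\Chat$ carrying $\infty$ to a point $\gamma\infty\in\Omo$ at which $F$ is manifestly regular, the pullback $F|_\gamma$ is again regular at $\infty$; concretely, the $y^{2N-2}$ growth permitted by \eqref{eq:Flim} is precisely supplied by the Jacobian $(\gamma'y)^{1-N}$. Hence $F|_\gamma\in\FN$ is a potential for $\Phi$, so $F|_\gamma-F\in\FN$ (a vector space) is a potential for $\Phi-\Phi=0$, and Lemma~\ref{lem:ZeroPot} gives $\Xi_F[\gamma]\in\HON$. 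Finally, since $F|_{\gamma\lambda}=(F|_\gamma)|_\lambda$ and the action \eqref{eq:MobPi} agrees with the restriction action on $(1-N)$-forms, the cocycle relation follows formally,
\begin{align*}
\Xi_F[\gamma]\big|_\lambda+\Xi_F[\lambda]
=(F|_\gamma-F)\big|_\lambda+(F|_\lambda-F)
=F|_{\gamma\lambda}-F=\Xi_F[\gamma\lambda].
\end{align*}
The main obstacle is the weight-bookkeeping of the middle paragraph; once the cancellation is verified the cohomological conclusion and the cocycle algebra are immediate.
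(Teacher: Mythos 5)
Your proof is correct, and it is the intended argument: the paper states this lemma without proof, but the placement of Lemma~\ref{lem:ZeroPot} immediately before it signals exactly your route, namely that the weight factors $(\gamma' y)^{1-N}$, $\overline{\gamma' y}{}^{-N}$ and $|\gamma' y|^{2(N-1)}$ cancel so that $F|_{\gamma}$ is again a potential for the same $\Phi$, whence $F|_{\gamma}-F$ is a potential for $0$ and lies in $\HON$, with the cocycle identity then following formally from $F|_{\gamma\lambda}=(F|_{\gamma})|_{\lambda}$. Your verification of \eqref{eq:Flim} for $F|_{\gamma}$ (the Jacobian supplying the permitted $y^{2N-2}$ growth since $\gamma\infty\in\Omo$) is the one detail most often glossed over, and you have it right.
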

  We note the following useful identity \cite{Be1,McIT}  
\begin{proposition}\label{prop:ThetaPhi}
	Let $\Theta,\Phi\in\HgN$. Then 
	\begin{align}
		\langle \Theta,\Phi \rangle =  \frac{1}{\tpi}\sum_{a\in\Ip}\oint_{\Con{a}}\Theta\,\Xi_{F}[\gamma_{a}],
		\label{eq:ThetaPhi}
	\end{align}
	where $F$ is any Bers potential for $\Phi$ with cocycle $\Xi_{F}$.
\end{proposition}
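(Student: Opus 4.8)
The plan is to convert the Petersson product into a boundary integral over the fundamental region $\D$ via Stokes' theorem, and then to fold the paired boundary curves together using the cocycle $\Xi_{F}$. First I would start from the expression of the Petersson product as an integral over $\D$, namely $\langle \Theta,\Phi\rangle = \iint_{\D}\Theta\,B_{\Phi}$, and substitute the coordinate-free potential identity \eqref{eq:dPhiF}, which gives $\Theta\,B_{\Phi}=\frac{1}{\tpi}d(\Theta F)$ for any Bers potential $F$ of $\Phi$. Since $\Theta$ is holomorphic and $F$ is a continuous potential with $\dyb f$ prescribed by \eqref{eq:dF}, the $1$-form $\Theta F=\theta f\,dy$ is $C^{1}$ on $\D$, while condition \eqref{eq:Flim} guarantees it extends across the point at infinity should $\infty\in\D$; hence Stokes' theorem applies and yields
\[
\langle \Theta,\Phi\rangle=\frac{1}{\tpi}\iint_{\D}d(\Theta F)=\frac{1}{\tpi}\oint_{\partial\D}\Theta F=\frac{1}{\tpi}\sum_{a\in\I}\oint_{\Con{a}}\Theta F,
\]
where $\partial\D$ decomposes into the oriented boundary curves $\{\Con{a}\}_{a\in\I}$.

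Next I would pair the curve $\Con{a}$ with $\Con{-a}$ for each $a\in\Ip$ and exploit the sewing relation $\gamma_{a}\Con{a}=-\Con{-a}$. Changing variables by $\gamma_{a}$ in the integral over $\Con{-a}$ replaces $\Theta F$ by its pullback, which by multiplicativity is $(\Theta F)|_{\gamma_{a}}=(\Theta|_{\gamma_{a}})(F|_{\gamma_{a}})$. Using the $\Gamma$-invariance $\Theta|_{\gamma_{a}}=\Theta$ of the holomorphic $N$-differential together with $F|_{\gamma_{a}}=F+\Xi_{F}[\gamma_{a}]$ from \eqref{eq:xiF}, the two translates of the periodic part $\Theta F$ cancel against one another and leave exactly a contribution $\Theta\,\Xi_{F}[\gamma_{a}]$ on $\Con{a}$. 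Summing over $a\in\Ip$ then collapses the $2g$ boundary integrals into $\frac{1}{\tpi}\sum_{a\in\Ip}\oint_{\Con{a}}\Theta\,\Xi_{F}[\gamma_{a}]$, which is precisely \eqref{eq:ThetaPhi}. That the answer is independent of the choice of potential $F$ (and of the auxiliary $P\in\HON$ one may add to it) is then a consistency check: replacing $F$ by $F+P$ changes $\Xi_{F}$ by the coboundary $\Xi_{P}$ of \eqref{eq:cobound}, whose contribution must integrate to zero.

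The step I expect to be the main obstacle is the careful bookkeeping of orientations and signs. One must fix the boundary orientation induced on $\partial\D$ by $\D=\Chat\setminus\bigcup_{a}\Delta_{a}$ relative to the chosen orientation of each $\Con{a}$, and track the orientation reversal encoded in $\gamma_{a}\Con{a}=-\Con{-a}$, so that the two contributions of each handle combine with the correct overall sign to reproduce the stated positive coefficient $\frac{1}{\tpi}$. A secondary technical point is justifying the applicability of Stokes' theorem, i.e.\ confirming via \eqref{eq:Flim} that $\Theta F$ is regular at infinity and that $F$ is sufficiently differentiable, but the essential content of the argument is the cancellation of the quasiperiodic part of $F$, which isolates its cocycle defect $\Xi_{F}[\gamma_{a}]$ along each handle.
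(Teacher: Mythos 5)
Your proposal is correct and follows essentially the same route as the paper: write $\langle\Theta,\Phi\rangle$ as $\frac{1}{\tpi}\iint_{\D}d(\Theta F)$ via \eqref{eq:dPhiF}, apply Stokes' theorem to the boundary curves $\Con{a}$, and use $\calC_{-a}=-\gamma_{a}\calC_{a}$ together with $\Theta|_{\gamma_{a}}=\Theta$ and $F|_{\gamma_{a}}=F+\Xi_{F}[\gamma_{a}]$ to collapse the $2g$ boundary integrals onto the cocycle terms. The orientation point you flag is the only detail to nail down (the paper's version of the Stokes step carries a minus sign, $\iint_{\D}d(\Theta F)=-\sum_{a\in\I}\oint_{\Con{a}}\Theta F$, which is then absorbed when $F-F|_{\gamma_{a}}=-\Xi_{F}[\gamma_{a}]$ is substituted), and your cancellation mechanism is exactly the paper's.
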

\begin{proof}
	Using  \eqref{eq:dPhiF}  we find using Stokes' theorem on $\mathcal D$  with boundary curves $\mathcal{C}_a$ that
	\begin{align*}\notag
	\tpi	\langle \Theta,\Phi \rangle=&
	 \iint_{\D}d\left( \Theta \,
		F \right)
		=
		- \sum_{a\in\I}\oint_{\Con{a}}\Theta F
		\\ 
		=&
		- \sum_{a\in\Ip}\oint_{\Con{a}}\Theta \,\left(F-F|_{\gamma_{a}}\right)
		=
		 \sum_{a\in\Ip}\oint_{\Con{a}}\Theta \,\Xi_{F}[\gamma_{a}],
	\end{align*}
since $\calC_{-a}=-\gamma_{a} \calC_{a}$.
\end{proof}
\begin{remark}\label{rem:Xirep}
	$\sum_{a\in\Ip}\oint_{\Con{a}}\Theta\,\Xi_{P}[\gamma_{a}]=0$ for any coboundary cocycle $\Xi_{P}$.
\end{remark}
\begin{corollary}\label{cor:PhiCont}
	Let $\{\Phi _{s}\}_{s=1}^{d_{N}}$  be a $\HgN$-basis with  Petersson dual basis $\{\Phi ^{\vee}_{r}\}_{r=1}^{d_{N}}$.
	For  $ r,s=1,\ldots,d_{N}$ we have\footnote{We note that there  appears to be a sign error  in  (4.1) of \cite{McIT}.}
	\begin{align}
		\frac{1}{\tpi}\sum_{a\in\Ip}\oint_{\Con{a}}\Phi ^{\vee}_{r}\,\Xi_{s}[\gamma_{a}]=\delta_{rs},
		\label{eq:intPhiXi}
	\end{align}
	where $\Xi_{s}$ is the cocycle for any Bers potential $F_{r}$ for $\Phi_{s}$.
\end{corollary}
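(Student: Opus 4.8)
The plan is to derive this corollary as an immediate specialization of Proposition~\ref{prop:ThetaPhi}. The key observation is that the left-hand side of \eqref{eq:intPhiXi} has exactly the form of the right-hand side of \eqref{eq:ThetaPhi}, with the generic forms $\Theta$ and $\Phi$ replaced by specific basis elements. Concretely, I would fix $s$, let $F_{s}\in\FN$ be any Bers potential for $\Phi_{s}$ with associated cocycle $\Xi_{s}:=\Xi_{F_{s}}$ (which is a genuine Eichler cocycle by Lemma~\ref{lem:xiF_cocycle}), and then apply Proposition~\ref{prop:ThetaPhi} with the choices $\Theta=\Phi^{\vee}_{r}$ and $\Phi=\Phi_{s}$. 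Reading off the conclusion gives
\begin{align*}
\frac{1}{\tpi}\sum_{a\in\Ip}\oint_{\Con{a}}\Phi^{\vee}_{r}\,\Xi_{s}[\gamma_{a}]
=\langle \Phi^{\vee}_{r},\Phi_{s}\rangle.
\end{align*}

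The second step is purely definitional: by construction the Petersson dual basis $\{\Phi^{\vee}_{r}\}$ satisfies $\langle \Phi^{\vee}_{r},\Phi_{s}\rangle=\delta_{rs}$, so the right-hand side is precisely $\delta_{rs}$ and the claimed identity follows. Here one must only be careful to respect the slot conventions of the pairing \eqref{eq:Pprod}: since $\langle\,\cdot\,,\,\cdot\,\rangle$ is conjugate-linear in its second argument and $\Theta$ occupies the first (holomorphic) slot in \eqref{eq:ThetaPhi}, the dual element $\Phi^{\vee}_{r}$ must be placed as $\Theta$ and the basis element $\Phi_{s}$ as the second argument, which is exactly what the formula above does.

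The one point that genuinely deserves comment, rather than any serious difficulty, is the assertion that the left-hand side does not depend on the particular Bers potential chosen for $\Phi_{s}$. Any two Bers potentials for $\Phi_{s}$ differ by an element of $\HON$ by Lemma~\ref{lem:ZeroPot}, so the corresponding cocycles differ by a coboundary $\Xi_{P}$ with $P\in\HON$. By Remark~\ref{rem:Xirep} we have $\sum_{a\in\Ip}\oint_{\Con{a}}\Phi^{\vee}_{r}\,\Xi_{P}[\gamma_{a}]=0$, so the value of the sum is unaffected and the phrase ``any Bers potential'' in the statement is justified. I expect no real obstacle beyond this bookkeeping; the result is essentially a restatement of Proposition~\ref{prop:ThetaPhi} against a dual basis, so the proof should consist of these two or three lines.
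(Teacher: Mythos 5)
Your proposal is correct and is exactly the paper's (implicit) argument: the corollary is stated as an immediate specialization of Proposition~\ref{prop:ThetaPhi} with $\Theta=\Phi^{\vee}_{r}$ and $\Phi=\Phi_{s}$, combined with the duality relation $\langle \Phi^{\vee}_{r},\Phi_{s}\rangle=\delta_{rs}$. Your additional remark on independence of the choice of potential via Remark~\ref{rem:Xirep} is a correct and worthwhile piece of bookkeeping that the paper leaves tacit.
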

The following commutative diagram summarizes the various maps introduced above: 
\begin{align}
	\label{eq:maps}
	\begin{CD}
		\calF_{N}  @>\alpha>> \Ei
		\\
		@V\epsilon VV @V\delta VV  
		\\
		\HgN  @>\beta >>  \COH
	\end{CD}
\end{align}
where $\epsilon $ is the complex anti-linear map determined by  \eqref{eq:dF},  $\alpha$ is the linear map determined by \eqref{eq:xiF} and $\delta$ is  the coboundary quotient map with $\Bound=\ker \delta $. Then 
Lemmas~\ref{lem:dimcobound} and \ref{lem:ZeroPot} are equivalent to
\begin{align}\label{eq:kernels}
	\ker \epsilon  =\HON,\quad \ker \delta=\alpha (\ker \epsilon  ).
\end{align}
The complex antilinear mapping $\beta $ is known as the Bers map.  
We have the following fundamental result  
\begin{proposition}{\cite{Be1}}\label{prop:BersMap}
The maps $\alpha$ and $\beta$ are  bijective.
\end{proposition}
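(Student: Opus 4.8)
The plan is to prove that $\alpha$ is bijective by pairing an injectivity argument with a dimension count, and then to read off the bijectivity of $\beta$ formally from the commutative diagram \eqref{eq:maps}. First I would assemble the relevant dimensions. The Proposition preceding Lemma~\ref{lem:ZeroPot} states that every $\Phi\in\HgN$ admits a Bers potential, which is exactly the surjectivity of $\epsilon$; combined with $\ker\epsilon=\HON$ (Lemma~\ref{lem:ZeroPot}) this yields an exact sequence $0\to\HON\to\FN\xrightarrow{\ \epsilon\ }\HgN\to 0$. Since a surjective antilinear map preserves complex dimension on the quotient, $\dim_{\C}\FN=\dim_{\C}\HON+\dim_{\C}\HgN=(2N-1)+(g-1)(2N-1)=g(2N-1)$. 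By the computation in the proof of Lemma~\ref{lem:dimH} we also have $\dim_{\C}\Ei=g\dim_{\C}\HON=g(2N-1)$. Thus $\alpha$ is a complex-linear map between spaces of the same finite dimension.

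Next I would prove that $\alpha$ is injective, which is the heart of the matter. Suppose $\alpha(F)=\Xi_{F}=0$ and set $\Phi=\epsilon(F)$, so that $F$ is a Bers potential for $\Phi$. Applying Proposition~\ref{prop:ThetaPhi} with $\Theta=\Phi$ gives $\langle\Phi,\Phi\rangle=\frac{1}{\tpi}\sum_{a\in\Ip}\oint_{\Con{a}}\Phi\,\Xi_{F}[\gamma_{a}]=0$, and positive-definiteness of the Petersson product forces $\Phi=0$. By Lemma~\ref{lem:ZeroPot} this places $F\in\HON$, whereupon $\Xi_{F}$ is the coboundary $\Xi_{F}[\gamma]=F|_{\gamma}-F$; the injectivity of the coboundary map $P\mapsto\Xi_{P}$ provided by Lemma~\ref{lem:dimcobound} then gives $F=0$. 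Hence $\alpha$ is injective, and being an injective linear map between spaces of equal finite dimension it is bijective.

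Finally I would extract $\beta$. By \eqref{eq:kernels} we have $\alpha(\ker\epsilon)=\ker\delta=\Bound$, so the bijection $\alpha$ carries $\HON=\ker\epsilon$ isomorphically onto $\Bound=\ker\delta$ and therefore descends to an isomorphism $\FN/\HON\xrightarrow{\sim}\Ei/\Bound=\COH$. Transporting this through the induced antilinear isomorphism $\FN/\HON\xrightarrow{\sim}\HgN$ coming from $\epsilon$, and recalling that $\beta$ is by construction the map rendering \eqref{eq:maps} commutative (that is, $\beta\circ\epsilon=\delta\circ\alpha$), identifies $\beta$ with this composite and shows it is bijective; equivalently, since $\dim_{\C}\HgN=\dim_{\C}\COH=(g-1)(2N-1)$, the injectivity of $\beta$ inherited from that of $\alpha$ already suffices.

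The step I expect to be the main obstacle is the injectivity of $\alpha$: the dimension bookkeeping and the passage to $\beta$ are routine diagram chasing, but the vanishing of $F$ genuinely depends on the positive-definiteness of the Petersson product channeled through the Stokes/residue identity of Proposition~\ref{prop:ThetaPhi}. One must also take care that the antilinearity of $\epsilon$ and $\beta$ does not corrupt the complex-dimension count, which is why I would phrase the count via the conjugate-space remark above rather than treating \eqref{eq:maps} as a diagram of $\C$-linear maps throughout.
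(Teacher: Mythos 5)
Your proof is correct and follows essentially the same route as the paper: equality of $\dim\calF_{N}$ and $\dim\Ei$, injectivity of $\alpha$ via Proposition~\ref{prop:ThetaPhi} and positive-definiteness of the Petersson product, then the diagram chase through \eqref{eq:kernels} for $\beta$. The only cosmetic difference is the final step of the injectivity argument, where the paper concludes that $F$ is a $\Gamma$-invariant holomorphic form in $\mathcal{H}^{(g)}_{1-N}$ and invokes Riemann--Roch, while you reach $F\in\HON$ via Lemma~\ref{lem:ZeroPot} and finish with the injectivity of $P\mapsto\Xi_{P}$ from Lemma~\ref{lem:dimcobound}; the two are equivalent.
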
 
\begin{proof}
We first show that $\alpha$  is bijective. From Lemma~\ref{lem:xiF_cocycle} we have
$\dim\calF_{N}=\dim\Ei$  so that $\alpha$ is surjective. Let $F\in\calF_{N}$ be a potential for $\Phi\in\HgN$ such that $\Xi_{F}=\alpha(F)=0$. 
Proposition~\ref{prop:ThetaPhi} implies that 
\begin{align*}
\langle \Phi,\Phi\rangle=\frac{1}{\tpi}\sum_{a\in\Ip}\oint_{\Con{a}}\Phi \,\Xi_{F} =0.
\end{align*}
Since $\langle \, , \,\rangle$ is positive definite, it follows that  $\Phi=0$ and hence $F$ is holomorphic by \eqref{eq:dPhiF}. 
But \eqref{eq:xiF} implies $F\vert_{\gamma}=F$ for all $\gamma\in\Gamma$. Thus $F\in\mathcal{H}^{(g)}_{1-N}$  implying $F=0$ by the Riemann-Roch theorem. Thus $\alpha$ is injective and therefore bijective.
  \eqref{eq:kernels} implies that the Bers map $\beta:  \HgN \rightarrow \COH$ is also bijective.
\end{proof}

Let $\{\Phi _{s}\}_{s=1}^{d_{N}}$  be a $\HgN$-basis with  dual basis 
$\{\Phi ^{\vee}_{r}\}_{r=1}^{d_{N}}$.
Let $F_{r}(y)=-\langle \Psi_{N}(\cdot,y), \Phi_{r} \rangle$ be a  potential for $\Phi_{r}$ with cocycle $\Xi_{r}[\gamma]:=F_{r}|_{\gamma}-F_{r}$. We then find $\chi_{N}[\gamma](x,y)$ of \eqref{eq:PsiNdefgamy} is given by
\begin{lemma}
	For all $\gamma\in\Gamma$ we have 
	\begin{align}
		\label{eq:chiPhidual}
		\chi_{N}[\gamma](x,y)=-\sum_{r=1}^{d_{N}}\Phi ^{\vee}_{r}(x)\Xi_{r}[\gamma](y).
	\end{align}
\end{lemma}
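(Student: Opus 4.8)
The plan is to show that, for each fixed $y\in\Omo$ and $\gamma\in\Gamma$, the form $\chi_{N}[\gamma](\cdot,y)$ is genuinely an element of $\HgN$ in its $x$-variable, to expand it in the Petersson dual basis $\{\Phi^{\vee}_{r}\}$, and then to identify the scalar ($y$-dependent) expansion coefficients with $-\Xi_{r}[\gamma](y)$ by pairing against the $\Phi_{s}$ and invoking the definition of the potentials $F_{s}$. First I would establish the membership claim. By \eqref{eq:PsiNdefgamy} we have $\chi_{N}[\gamma](x,y)=\Psi_{N}(x,\gamma y)-\Psi_{N}(x,y)$, which is holomorphic for $x,y\in\Omo$ as asserted below \eqref{eq:PsiNdefgamy} (the simple poles of the two terms in $x$ both lie on the common orbit $\Gamma y$ and their residues cancel). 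Since $\Psi_{N}$ is $\Gamma$-invariant in $x$ by \eqref{eq:GEM3rsx}, so is the difference, i.e.\ $\chi_{N}[\gamma](\delta x,y)=\chi_{N}[\gamma](x,y)$ for all $\delta\in\Gamma$. Being holomorphic, $\Gamma$-invariant and of weight $N$ in $x$, the form $\chi_{N}[\gamma](\cdot,y)$ descends to a global holomorphic $N$-differential on $\Sg\simeq\Omo/\Gamma$, so $\chi_{N}[\gamma](\cdot,y)\in\HgN$. We may therefore write $\chi_{N}[\gamma](x,y)=\sum_{r=1}^{d_{N}}c_{r}[\gamma](y)\,\Phi^{\vee}_{r}(x)$ with scalar coefficients $c_{r}[\gamma](y)$.

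Next I would compute these coefficients. Pairing the expansion against $\Phi_{s}$ and using the duality relation $\langle\Phi^{\vee}_{r},\Phi_{s}\rangle=\delta_{rs}$ (which follows from Proposition~\ref{prop:ThetaPhi} and Corollary~\ref{cor:PhiCont}) gives $c_{s}[\gamma](y)=\langle\chi_{N}[\gamma](\cdot,y),\Phi_{s}\rangle$, the interchange of the finite sum with the linear first slot of the Petersson product being immediate. To evaluate this pairing I would use the definition $F_{s}(y)=-\langle\Psi_{N}(\cdot,y),\Phi_{s}\rangle$ from \eqref{eq:FPsi}, apply the quasiperiodicity \eqref{eq:PsiNdefgamy} inside the integral, and use \eqref{eq:GenBel}:
\begin{align*}
F_{s}(\gamma y)&=-\iint_{\D}\Psi_{N}(\cdot,\gamma y)\,B_{\Phi_{s}}\\
&=F_{s}(y)-\iint_{\D}\chi_{N}[\gamma](\cdot,y)\,B_{\Phi_{s}}=F_{s}(y)-\langle\chi_{N}[\gamma](\cdot,y),\Phi_{s}\rangle,
\end{align*}
where the final step uses $\chi_{N}[\gamma](\cdot,y)\in\HgN$ together with the pairing \eqref{eq:GenBel}. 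Since $\Xi_{s}[\gamma](y)=F_{s}(\gamma y)-F_{s}(y)$, this yields $\langle\chi_{N}[\gamma](\cdot,y),\Phi_{s}\rangle=-\Xi_{s}[\gamma](y)$, hence $c_{s}[\gamma](y)=-\Xi_{s}[\gamma](y)$, and therefore $\chi_{N}[\gamma](x,y)=-\sum_{r=1}^{d_{N}}\Phi^{\vee}_{r}(x)\,\Xi_{r}[\gamma](y)$, as claimed.

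The main obstacle — indeed the only point carrying real content — is securing the membership $\chi_{N}[\gamma](\cdot,y)\in\HgN$, since everything afterwards is a formal manipulation of the Petersson pairing and the dual basis. This rests on the holomorphy of $\chi_{N}[\gamma]$ asserted below \eqref{eq:PsiNdefgamy}, whose justification is the residue cancellation of the simple poles of $\Psi_{N}(x,\gamma y)$ and $\Psi_{N}(x,y)$ along the common orbit $\Gamma y$; once holomorphy and $\Gamma$-invariance in $x$ are in hand, descent to the compact surface gives membership and legitimises the use of \eqref{eq:GenBel}. I would also note that the resulting identity is an equality of $N$-differentials in $x$ whose entire $y$-dependence is carried by the scalars $\Xi_{r}[\gamma](y)$, which is automatic since both sides are determined pointwise in $y$.
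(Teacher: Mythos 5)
Your proposal is correct and follows essentially the same route as the paper's (very terse) proof: expand $\chi_{N}[\gamma](\cdot,y)\in\HgN$ in the Petersson dual basis and identify the coefficients $\langle \Psi_{N}(\cdot,\gamma y)-\Psi_{N}(\cdot,y),\Phi_{r}\rangle$ with $-\Xi_{r}[\gamma](y)$ via the definition of the potentials $F_{r}$. Your additional care in justifying the membership $\chi_{N}[\gamma](\cdot,y)\in\HgN$ (holomorphy plus $\Gamma$-invariance in $x$) simply makes explicit what the paper leaves implicit.
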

\begin{proof}
$\chi_{N}[\gamma](x,y)=\sum_{r=1}^{d_{N}}\alpha_{r}\Phi ^{\vee}_{r}(x)$ for 
$
\alpha_{r}=\langle \Psi_{N}(\cdot,\gamma y)-\Psi_{N}(\cdot,y),\Phi_{r}\rangle
=-\Xi_{r}[\gamma] (y)$.
\end{proof} 
Alternatively, for each Schottky group generator $\gamma_{a}$, 
consider the expansion
\begin{align*}
\chi_{N}[\gamma_{a}](x,y)=-\sum_{\ell\in\calL}\Theta_{N,a}^{\ell}(x)y_{a}^{\ell}\,dy^{1-N},\quad a\in\Ip,
\end{align*}
with $y_{a}:=y-w_{a}$ and some  $\Theta_{N,a}^{\ell}\in\HgN$ for $a\in\Ip$ and $\ell\in\calL$. 
Define a canonical cocycle basis $\{\Xi_{a}^{\ell}[\gamma](y)\}$ by its evaluation on each $\Gamma$ generator $\gamma_{b},\, b\in\Ip$ as follows:
\begin{align}
	\label{eq:xinorm}
	\Xi_{a}^{\ell}[\gamma_{b}](y):=\delta_{ab}y_{a}^{\ell}\, dy^{1-N},
\end{align}
Comparing to \eqref{eq:chiPhidual} we find
\begin{lemma}\label{lem:chiN}
For all $\gamma\in \Gamma$ 	we have
\begin{align}
		\label{eq:chiTheta}
		\chi_{N}[\gamma](x,y)=-\sum_{a\in\Ip}\sum_{\ell\in\calL}
		\Theta_{N,a}^{\ell}(x)\Xi_{a}^{\ell}[\gamma](y),
	\end{align}
where $\{ \Theta_{N,a}^{\ell}(x)\}$, for $a\in \Ip$ and $\ell\in\calL$,  is a $\HgN$ spanning set. 
\end{lemma}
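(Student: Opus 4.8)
The plan is to treat both sides of \eqref{eq:chiTheta} as Eichler $1$-cocycles in $\gamma$ with values in $\HON$ (with $x$ held fixed) and to invoke the freeness of $\Gamma$, so that it suffices to check the identity on the generators $\gamma_{b}$, $b\in\Ip$. First I would record that $\chi_{N}[\gamma](x,\cdot)$ is indeed such a cocycle: by \eqref{eq:chiPhidual} it equals $-\sum_{r=1}^{d_{N}}\Phi^{\vee}_{r}(x)\,\Xi_{r}[\gamma]$, a linear combination of the Eichler cocycles $\Xi_{r}$ with coefficients independent of $\gamma$, hence a cocycle satisfying \eqref{eq:cocycle} in the $y$-variable. (Equivalently, the relation $\chi_{N}[\gamma\lambda]=\chi_{N}[\gamma]|_{\lambda}+\chi_{N}[\lambda]$ follows directly from applying \eqref{eq:PsiNdefgamy} to $\Psi_{N}(x,\gamma\lambda y)=\Psi_{N}(x,\gamma(\lambda y))$.) Since $\chi_{N}[\gamma](x,\cdot)\in\HON$, for fixed $x$ it is a polynomial of degree $\le 2N-2$ in $y$ times $dy^{1-N}$, so its expansion in the shifted-monomial basis $\{y_{a}^{\ell}\,dy^{1-N}:\ell\in\calL\}$ of $\HON$ is finite and exact; this is precisely the expansion defining $\Theta_{N,a}^{\ell}$.

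Next I would check that the right-hand side of \eqref{eq:chiTheta} is also a cocycle and agrees with $\chi_{N}$ on generators. Because $\Gamma$ is free on $\{\gamma_{b}\}_{b\in\Ip}$, the evaluation map $\Xi\mapsto(\Xi[\gamma_{b}])_{b\in\Ip}$ is an isomorphism $\Ei\xrightarrow{\sim}(\HON)^{g}$ (this is the counting used in Lemma~\ref{lem:dimH}), so the normalization \eqref{eq:xinorm} determines a unique cocycle $\Xi_{a}^{\ell}$ for each $(a,\ell)$, and the collection $\{\Xi_{a}^{\ell}\}_{a\in\Ip,\,\ell\in\calL}$ is a basis of $\Ei$. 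Evaluating the right-hand side of \eqref{eq:chiTheta} on $\gamma_{b}$ and using \eqref{eq:xinorm} collapses the double sum to $-\sum_{\ell\in\calL}\Theta_{N,b}^{\ell}(x)\,y_{b}^{\ell}\,dy^{1-N}$, which is exactly $\chi_{N}[\gamma_{b}](x,y)$ by the definition of the coefficients $\Theta_{N,b}^{\ell}$. Two cocycles that agree on a free generating set are equal, so \eqref{eq:chiTheta} holds for all $\gamma\in\Gamma$.

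It remains to prove that $\{\Theta_{N,a}^{\ell}\}$ spans $\HgN$, and this is where the real content lies, since there are $g(2N-1)$ of these forms but only $d_{N}=(g-1)(2N-1)$ dimensions to fill. Here I would extract the $\Theta$'s back out using the non-degenerate pairing encoded in Corollary~\ref{cor:PhiCont}. Pairing \eqref{eq:chiPhidual} on generators against a dual basis element $\Phi^{\vee}_{r}$ via $\tfrac{1}{\tpi}\sum_{a\in\Ip}\oint_{\Con{a}}\Phi^{\vee}_{r}(y)\,\chi_{N}[\gamma_{a}](x,y)$ and using \eqref{eq:intPhiXi} collapses the sum over the dual basis to give $-\Phi^{\vee}_{r}(x)$. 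Substituting instead the generator values $\chi_{N}[\gamma_{a}](x,y)=-\sum_{\ell}\Theta_{N,a}^{\ell}(x)\,y_{a}^{\ell}\,dy^{1-N}$ into the same pairing yields
\begin{align*}
\Phi^{\vee}_{r}(x)=\sum_{a\in\Ip}\sum_{\ell\in\calL}\Big(\tfrac{1}{\tpi}\oint_{\Con{a}}\Phi^{\vee}_{r}(y)\,y_{a}^{\ell}\,dy^{1-N}\Big)\,\Theta_{N,a}^{\ell}(x).
\end{align*}
Thus each member of the basis $\{\Phi^{\vee}_{r}\}_{r=1}^{d_{N}}$ of $\HgN$ is an explicit linear combination of the $\Theta_{N,a}^{\ell}$, which forces $\{\Theta_{N,a}^{\ell}\}$ to span $\HgN$.

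The main obstacle is the spanning statement rather than the identity \eqref{eq:chiTheta} itself: the identity is essentially bookkeeping once one observes that everything in sight is an $\HON$-valued cocycle determined by its generator values. The spanning claim genuinely requires the non-degeneracy supplied by Corollary~\ref{cor:PhiCont} — which in turn rests on the bijectivity of the Bers map (Proposition~\ref{prop:BersMap}) — to guarantee that the coefficient matrix relating the $\Theta_{N,a}^{\ell}$ to the dual basis $\{\Phi^{\vee}_{r}\}$ has full rank $d_{N}$. I would take care over one consistency point: that $\chi_{N}[\gamma](x,\cdot)$ really lands in $\HON$, so that the finite expansion over $\ell\in\calL$ captures it exactly and no tail terms are lost.
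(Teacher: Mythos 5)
Your proof is correct and follows essentially the route the paper intends with its one-line ``comparing to \eqref{eq:chiPhidual}'' justification: both sides are $\HON$-valued Eichler cocycles on the free group $\Gamma$, so agreement on the generators $\gamma_{b}$ (immediate from the defining expansion of $\Theta_{N,a}^{\ell}$ and the normalization \eqref{eq:xinorm}) gives the identity. Your spanning argument — pairing $\chi_{N}[\gamma_{a}](x,\cdot)$ against $\Phi_{r}^{\vee}$ over $\sum_{a}\oint_{\Con{a}}$ and using \eqref{eq:intPhiXi} to exhibit each $\Phi_{r}^{\vee}$ explicitly in the span of the $\Theta_{N,a}^{\ell}$ — correctly supplies the non-degeneracy the paper leaves implicit (equivalently, the full rank of the matrix $(\xi_{ra}^{\ell})$ coming from linear independence of the cocycles $\Xi_{r}$), the only caveat being that Corollary~\ref{cor:PhiCont} rests on Proposition~\ref{prop:ThetaPhi} rather than on Proposition~\ref{prop:BersMap} as you suggest.
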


\begin{remark}\label{rem:Psi unique}
The Bers quasi-form is not uniquely defined since \eqref{eq:PiNdef} depends on the choice of distinct limit set points $\{A_{j}\}$. Furthermore, quasi-periodicity \eqref{eq:PsiNdefgamy} implies that $\Psi_{N}(x,\gamma y)$ for any $\gamma\in\Gamma$ can be employed in \eqref{eq:FPsi} to  construct a potential $F(\gamma y)$ for each $\Phi\in\HgN$. In general,  let  $\{\Phi _{s}\}_{s=1}^{d_{N}}$ be a $\HgN$-basis with Petersson dual basis 
$\{\Phi ^{\vee}_{r}\}_{r=1}^{d_{N}}$ and corresponding potentials
$F_{r}(y)=-\langle \Psi_{N}(\cdot,y), \Phi_{r} \rangle$ with cocycles $\Xi_{r}[\gamma]:=F_{r}|_{\gamma}-F_{r}$. Then for any $d_{N}$ elements $P_{r}(y)\in\HON$ we can construct a Bers quasi-form 
	\begin{align}\label{eq:Psihat}
		\Psitilde_{N}(x,y):= \Psi_{N}(x,y)-\sum_{r=1}^{d_{N}}\Phi_{r}^{\vee}(x)P_{r}(y), 
	\end{align}
with $\Phi _{r}$ potential  $\widetilde{F}_{r}=F_{r}+P_{r}$ and cocycle $\widetilde{\Xi}_{r}=\Xi_{r}+\Xi_{P_{r}}$ for  $\Xi_{P_{r}}:= P_{r}|_{\gamma}-P_{r}$. The corresponding $\HgN$ 
spanning set of Lemma~\ref{lem:chiN} has elements
\begin{align}\label{eq:Thetatilde}
\widetilde{\Theta}_{N,a}^{\ell}(x)=\Theta_{N,a}^{\ell}(x)
+ \sum_{r=1}^{d_{N}} \Phi_{r}^{\vee}(x) p_{ra}^{\ell},
\end{align}
where $p_{ra}^{\ell}\in\C$ is determined from $\Xi_{P_{r}}[\gamma]=\sum_{\ell\in\calL}\sum_{a\in\Ip}p_{ra}^{\ell}\Xi_{a}^{\ell}[\gamma]$.
\end{remark}

\subsection{Relationship of Bers Quasiform to the Green's Function}
We consider the Green's function $G_{N}(x,y)$ for the anti-holomorphic part of the Poincar\'e metric compatible  connection \cite{Ma,McI,McIT} and its relationship to the Bers quasiform $\Psi_{N}$.  $G_{N}(x,y)$ is a unique  form of weight $(N,1-N)$  with $G_{N}(x,y)\sim \frac{1}{x-y}dx^{N}dy^{1-N}$ for $x\sim y$. $G_{N}(x,y)$ is holomorphic in $x$ for $x\neq y$ but is non-holomorphic in $y$. This contrasts with the Bers quasi-form $\Psi_{N}(x,y)$ which is holomorphic in $y$ for $y\neq x$ but is not a weight $1-N$ form in $y$.  This is very similar to the relationship between a mock modular form and its modular completion \cite{Zw,BFOR} in that a mock modular form is holomorphic but not modular whereas the modular completion is modular but not holomorphic. Following the parlance for mock modular forms, we can view $G_{N}$ as  a Schottky group completion of $\Psi_{N}$ whereas $\Psi_{N}$ is a (non-unique) holomorphic projection of $G_{N}$.

Consider a $\HgN$-basis $\{\Phi _{r}=\phi_{r}(z)dz^{N}\}$ 
with Petersson dual basis 
$\{\Phi ^{\vee}_{r}=\phi ^{\vee}_{r}(z)dz^{N}\}$ for $r=1,\ldots,d_{N}$. 
Define the projection kernel by
\begin{align}
	\label{eq:PNker}
	p_{N}(x,y):=
	\sum_{r=1}^{d_{N}}\phi _{r}^{\vee}(x) \overline{\phi _{r}(y)} \pmet^{1-N}(y).
\end{align}
For $N\ge 2$ and $g\ge 2$ we define the Green's function to be a $(N,1-N)$-form $G_{N}(x,y)=g_{N}(x,y)dx^{N}dy^{1-N}$ where the regular part defined by 
\begin{align}
	\label{eq:GN1}
	g_{N}^{R}(x,y):=g_{N}(x,y)-\frac{1}{x-y},
\end{align}
satisfies the following two conditions:
\begin{enumerate}[(I)]
	\item $g_{N}^{R}(x,y)$ is holomorphic in $x$, 
	\item $g_{N}^{R}(x,y)$ is regular but not holomorphic in $y$ with $
	\dfrac{1}{\pi}\dyb\, g_{N}^{R}(x,y)
	=  p_{N}(x,y)$. 
\end{enumerate}

\begin{remark}\leavevmode
	\begin{enumerate}[(i)]
		\item
		We may heuristically rewrite (II) as 
		\[
		\frac{1}{\pi}\dyb g_{N}(x,y)=-\delta(y-x)+p_{N}(x,y),
		\]
		for Dirac delta function $\pi\delta(y-x)=\dyb (y-x)^{-1}$.
		This  is a defining property for the Green's function in the physics literature e.g. \cite{EO,Ma}.	
		
		\item 
		We may write (II)  in a coordinate-free way (similarly to \eqref{eq:dPhiF}) where
		\begin{align}
			\label{eq:dKNPhi}
			\frac{1}{\tpi}d \left(G_{N}(x,\cdot)\Phi \right)
			=\Phi\,P_{N}(x,\cdot),
		\end{align}
		on $\D-\{x\}$ for all $\Phi \in\HgN$ with
		\[
		P_{N}(x,y):=
		\sum_{r=1}^{d_{N}}\Phi _{r}^{\vee}(x) \overline{\Phi _{r}(y)}  \PM(y)^{-N}\vol(y).
		\] 
	\end{enumerate}
\end{remark}
\begin{lemma}
	\label{lem:KNunique}
	The Green's function is unique.
\end{lemma}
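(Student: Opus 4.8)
The plan is to show that the difference of any two Green's functions vanishes, by reducing it to a globally holomorphic weight $1-N$ differential and invoking the Riemann-Roch table above. Suppose $G_{N}=g_{N}(x,y)dx^{N}dy^{1-N}$ and $\widetilde{G}_{N}=\widetilde{g}_{N}(x,y)dx^{N}dy^{1-N}$ both satisfy conditions (I) and (II). I would form the difference $D(x,y):=G_{N}(x,y)-\widetilde{G}_{N}(x,y)=d(x,y)\,dx^{N}dy^{1-N}$ with $d:=g_{N}-\widetilde{g}_{N}$. Since, by \eqref{eq:GN1}, both $g_{N}$ and $\widetilde{g}_{N}$ have the identical singular part $\tfrac{1}{x-y}$, the simple pole at $x=y$ cancels and $d=g_{N}^{R}-\widetilde{g}_{N}^{R}$ is regular everywhere on $\D\times\D$.

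Next I would apply the two defining conditions. Condition (I) gives that $d$, being the difference of two functions holomorphic in $x$, is itself holomorphic in $x$. Subtracting the two instances of the $\dyb$-equation in condition (II) gives $\frac{1}{\pi}\dyb\, d(x,y)=p_{N}(x,y)-p_{N}(x,y)=0$, so that $d$ is also holomorphic in $y$. Thus $D$ is a weight $(N,1-N)$-form that is holomorphic and regular in both variables. Crucially, because $G_{N}$ and $\widetilde{G}_{N}$ are genuine $(N,1-N)$-forms on $\Sg$ (in contrast to the merely quasiperiodic Bers quasiform), $D$ is $\Gamma$-invariant in each variable with the appropriate weight. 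Hence, for each fixed $x\in\Sg$, the restriction $D(x,\cdot)$ is a global holomorphic weight $1-N$ differential on $\Sg$, i.e.\ an element of $\mathcal{H}^{(g)}_{1-N}$. Since $g\ge 2$ and $N\ge 2$, we have $1-N\le -1<0$, so the Riemann-Roch table gives $\dim\mathcal{H}^{(g)}_{1-N}=0$. Therefore $D(x,\cdot)=0$ for every $x$, yielding $G_{N}=\widetilde{G}_{N}$ and establishing uniqueness. This is the same vanishing mechanism used at the end of the proof of Proposition~\ref{prop:BersMap}.

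The main point requiring care—rather than a genuine obstacle—is verifying that $D$ really defines a global holomorphic section of the weight $1-N$ bundle, and not merely a local solution of the $\dyb$-equation: one must confirm that the cancellation of the pole and the two holomorphicity conclusions persist across the boundary identifications $\Con{a}\leftrightarrow\Con{-a}$ and at the point at infinity, so that $D(x,\cdot)$ descends to $\Omo/\Gamma\simeq\Sg$ as an honest holomorphic $(1-N)$-differential. Once this is in place, the dimension count makes the vanishing immediate.
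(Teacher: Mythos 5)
Your proof is correct and follows essentially the same route as the paper's: subtract the two Green's functions, note the singular parts cancel, use condition (II) to see the difference is holomorphic in $y$, and invoke the Riemann--Roch vanishing of holomorphic forms of negative weight $1-N$. The extra care you take about global descent to $\Sg$ is a reasonable elaboration but does not change the argument.
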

\begin{proof}
	Suppose $G_{N}$ and $\widetilde{G}_{N}$ are Green's functions.  Let  
	\[
	G_{N}(x,y)-\widetilde{G}_{N}(x,y)= h_{N}(x,y)dx^{N}dy^{1-N},
	\]
	for $h_{N}(x,y)=g^{R}_{N}(x,y)-\widetilde{g}^{R}_{N}(x,y)$.
	(II) implies $ h_{N}(x,y)dy^{1-N}$ is a holomorphic form of weight $1-N<0$ in $y$ (for fixed $x$). Hence $ h_{N}=0$  by the Riemann-Roch theorem.
\end{proof} 
\begin{proposition}
	\label{prop:KNperp}
	$\langle G_{N}(\cdot,y),\Phi\rangle=0$ for all $\Phi\in\HgN$.
\end{proposition}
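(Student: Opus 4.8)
The plan is to prove the statement by showing that the weight-$(1-N)$ form in $y$ obtained by carrying out the Petersson integral in the first slot,
\[
H(y):=\langle G_N(\cdot,y),\Phi\rangle
=\Big(\iint_{\D} g_N(x,y)\,\overline{\phi(x)}\,\pmet(x)^{1-N}\,\tfrac{\im}{2}\,dx\wedge d\xbar\Big)\,dy^{1-N},
\]
is a \emph{global holomorphic} $(1-N)$-form, and then invoking the same Riemann--Roch vanishing already used in the proof of Lemma~\ref{lem:KNunique}. The integral is well defined since the only singularity of $g_N(\cdot,y)$ in $x$ is the simple pole at $x=y$, which is locally integrable in two real dimensions, so $h(y)$ is continuous.

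The heart of the argument is the computation of $\dyb H$. Differentiating under the integral and using the distributional form of condition~(II) recorded in the Remark following the definition of $G_N$, namely $\tfrac1\pi\dyb g_N(x,y)=-\delta(y-x)+p_N(x,y)$ with $p_N$ the projection kernel \eqref{eq:PNker}, the $\delta$-term evaluates the integrand at $x=y$ and produces $-\pi\,\overline{\phi(y)}\,\pmet(y)^{1-N}$, while the $p_N$-term produces $\pi\,\pmet(y)^{1-N}\sum_r \overline{\phi_r(y)}\,\langle \Phi_r^{\vee},\Phi\rangle$. Using the defining relation $\langle \Phi_r^{\vee},\Phi_s\rangle=\delta_{rs}$ of the Petersson dual basis, the second contribution collapses to $\pi\,\overline{\phi(y)}\,\pmet(y)^{1-N}$, exactly cancelling the first, so that $\dyb H=0$.

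Next I would verify that $H$ is genuinely a $\Gamma$-invariant $(1-N)$-differential: since $G_N(x,\cdot)$ is a form of weight $(N,1-N)$ it is $\Gamma$-invariant in $y$ for each fixed $x$, and the $x$-integration preserves this, so $H(\gamma y)=H(y)$ for all $\gamma\in\Gamma$. As $H$ is continuous (no residual pole survives the $x$-integration) and $\dyb H=0$, it is holomorphic; being a holomorphic form of weight $1-N<0$ for $N\ge 2$, the Riemann--Roch theorem forces $H\equiv 0$, which is the assertion.

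I expect the main obstacle to be the rigorous justification of differentiating through the singular integral, i.e. of the identity $\dyb g_N=\pi(-\delta+p_N)$ against the simple pole at $x=y$. This I would handle by splitting $g_N=g_N^{R}+\tfrac{1}{x-y}$: the regular part $g_N^{R}$ is holomorphic in $x$ and smooth in $y$, so differentiation passes through cleanly and yields the $p_N$-term via condition~(II), while the $\tfrac{1}{x-y}$ part is treated by excising a disk $\{|x-y|<\epsilon\}$ and letting $\epsilon\to 0$ (equivalently by the Cauchy--Pompeiu formula), which supplies precisely the $-\delta$ contribution. The remaining bookkeeping of weights, the factor $\tfrac1\pi$, and the dual-basis normalization is routine.
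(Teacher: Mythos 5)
Your proof is correct and follows essentially the same route as the paper: compute $\dyb$ of the Petersson integral, split $g_{N}$ into the singular part $\frac{1}{x-y}$ (yielding the delta contribution $-\overline{\phi(y)}\pmet(y)^{1-N}$) and the regular part $g_{N}^{R}$ (yielding the projection-kernel contribution via condition (II)), observe that these cancel by the dual-basis normalization, and conclude by Riemann--Roch for a holomorphic form of negative weight $1-N$. The only differences are expository --- you work with a general $\Phi$ rather than basis elements and make explicit the $\Gamma$-invariance in $y$ and the Cauchy--Pompeiu justification that the paper leaves implicit.
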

\begin{proof}
Let $\kappa_{s}(y)dy^{1-N} =\langle G_{N}(\cdot,y),\Phi_{s}\rangle$ for a $\HgN$-basis $\{\Phi_{s}\} $. 
	For $x,y\in \D$ we have
	\begin{align*}
		\frac{1}{\pi}\dyb\kappa_{s}(y)&=\frac{1}{\pi}\dyb\iint_{\D}g_{N}(x,y)\,
		\overline{\phi_{s}(x)}\pmet(x)^{1-N}\,d^{2}x
		\\
		&=-\overline{\phi_{s}(y)}\pmet(y)^{1-N}+\frac{1}{\pi}\iint_{\D}\dyb g_{N}^{R}(x,y)\,
		\overline{\phi_{s}(x)}\pmet(x)^{1-N}\,d^{2}x.
	\end{align*}
	Then  \eqref{eq:PNker} and condition (II)  imply
	\begin{align*}
		\frac{1}{\pi}\dyb\kappa_{s}(y)= -\overline{\phi_{s}(y)}\pmet(y)^{1-N}+\sum_{r=1}^{d_{N}}\langle \Phi _{r}^{\vee},\Phi_{s}\rangle \overline{\phi _{r}(y)} \pmet(y)^{1-N}=0.
	\end{align*}
	Thus $ \kappa_{s}(y)dy^{1-N}$ is a negative weight  holomorphic  form and hence
	$ \kappa_{s}=0$.
\end{proof}     
We now construct  the unique Green's function \cite{Ma,McIT} from the Bers quasi-form $\Psi_{N}(x,y)$.
Let $\{\Phi _{s}\} $  be a $\HgN$-basis with Petersson dual basis  $\{\Phi ^{\vee}_{r}\} $ and potentials $\{{F} _{s}(y)\}$ associated with $\Psi_{N}$.
\begin{proposition} 
	The Green's function $G_{N}(x,y)$ for $x,y\in\D$ is given by
	\begin{align}
		\label{eq:KNdefn}
		G_{N}(x,y)=\Psi_{N}(x,y)+\sum_{r=1}^{d_{N}}\Phi ^{\vee}_{r}(x){F}_{r}(y).
	\end{align}
\end{proposition}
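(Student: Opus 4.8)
The plan is to exploit the uniqueness of the Green's function established in Lemma~\ref{lem:KNunique}: it suffices to verify that the right-hand side of \eqref{eq:KNdefn}, which I denote
\[
\widehat{G}_{N}(x,y):=\Psi_{N}(x,y)+\sum_{r=1}^{d_{N}}\Phi^{\vee}_{r}(x){F}_{r}(y),
\]
is a bidifferential of weight $(N,1-N)$ on $\Sg\times\Sg$ that carries the correct simple pole and satisfies conditions (I) and (II). Since $\Psi_{N}$ is an honest $N$-differential in $x$ by \eqref{eq:GEM3rsx} and each $\Phi^{\vee}_{r}(x)$ is a holomorphic $N$-form, $\widehat{G}_{N}$ already transforms correctly in $x$ with no further work.

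The key point, and the one I expect to be the crux, is to show that $\widehat{G}_{N}$ is also a genuine weight $1-N$ differential in $y$, i.e.\ that the correction term exactly cancels the quasiperiodicity \eqref{eq:PsiNdefgamy} of $\Psi_{N}$. Writing $F_{r}|_{\gamma}-F_{r}=\Xi_{r}[\gamma]$ so that $F_{r}(\gamma y)=F_{r}(y)+\Xi_{r}[\gamma](y)$, I would compute
\[
\widehat{G}_{N}(x,\gamma y)=\widehat{G}_{N}(x,y)+\chi_{N}[\gamma](x,y)+\sum_{r=1}^{d_{N}}\Phi^{\vee}_{r}(x)\Xi_{r}[\gamma](y),
\]
and then invoke the identity \eqref{eq:chiPhidual}, namely $\chi_{N}[\gamma](x,y)=-\sum_{r}\Phi^{\vee}_{r}(x)\Xi_{r}[\gamma](y)$, so that the last two terms cancel. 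Hence $\widehat{G}_{N}(x,\gamma y)=\widehat{G}_{N}(x,y)$ for all $\gamma\in\Gamma$, and $\widehat{G}_{N}$ is a bona fide $(N,1-N)$-form. This is precisely the statement that the potentials $F_{r}$ are engineered to perform the holomorphic-to-quasiform completion, and without this step the Riemann--Roch argument underlying Lemma~\ref{lem:KNunique} could not be applied.

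It then remains to check the three analytic defining properties on $\D\times\D$. For the pole: since $\D$ is a fundamental region, distinct $\Gamma$-equivalent points cannot both lie in its interior, so the only singularity of $\Psi_{N}$ there is the simple pole of residue one at $x=y$, while $\sum_{r}\Phi^{\vee}_{r}(x)F_{r}(y)$ is regular (the $\Phi^{\vee}_{r}$ being holomorphic and the potentials $F_{r}$ continuous); thus $\widehat{G}_{N}\sim\tfrac{1}{x-y}\,dx^{N}dy^{1-N}$ with regular part $\widehat{g}^{R}_{N}=\widehat{g}_{N}-\tfrac{1}{x-y}$. Condition (I) follows because $\Psi_{N}-\tfrac{1}{x-y}\,dx^{N}dy^{1-N}$ is holomorphic in $x$ and each $\Phi^{\vee}_{r}(x)$ is holomorphic. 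For condition (II), the regular part of $\Psi_{N}$ is holomorphic in $y$ so its $\dyb$-derivative vanishes, and applying $\tfrac{1}{\pi}\dyb$ together with the Bers potential equation \eqref{eq:dF} for each $F_{r}$ gives
\[
\frac{1}{\pi}\dyb\,\widehat{g}^{R}_{N}(x,y)=\sum_{r=1}^{d_{N}}\phi^{\vee}_{r}(x)\,\overline{\phi_{r}(y)}\,\pmet(y)^{1-N}=p_{N}(x,y),
\]
by the definition \eqref{eq:PNker} of the projection kernel. With all defining conditions verified, Lemma~\ref{lem:KNunique} forces $\widehat{G}_{N}=G_{N}$, which is \eqref{eq:KNdefn}.
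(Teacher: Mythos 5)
Your argument is correct and follows essentially the same route as the paper: both verify that the correction term $\sum_{r}\Phi^{\vee}_{r}(x)F_{r}(y)$ cancels the quasiperiodicity of $\Psi_{N}$ in $y$ via \eqref{eq:xiF} and \eqref{eq:chiPhidual}, check conditions (I) and (II) using the Bers equation \eqref{eq:dF} and the holomorphicity of $\psi_{N}^{R}$ in $x$, and conclude by the uniqueness of Lemma~\ref{lem:KNunique}. Your write-up simply makes explicit some steps the paper leaves as "straightforward."
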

\begin{proof}
	$ \Psi_{N}(x,y)+\sum_{r}\Phi ^{\vee}_{r}(x){F}_{r}(y)$ is an $N$-differential with respect to $x$ and using \eqref{eq:xiF} and \eqref{eq:chiPhidual}, it is a $1-N$ differential with respect to $y$. Thus condition (I) of  \eqref{eq:GN1} is verified.
	It is straightforward to confirm condition (II)  using the Bers equation \eqref{eq:dF} and  that 
	\[
	\psi_{N}^{R}(x,y)=\psi_{N}(x,y)-\frac{1}{x-y},
	\]
	is holomorphic in $x$. 
\end{proof}
The inverse map $\alpha^{-1}$  associated with the inverse Bers map $\beta^{-1}$, which exists by Proposition~\eqref{prop:BersMap}, can  be explicitly described in terms of the  Green's function as follows:
\begin{proposition}\label{prop:GNF}
	Let $\Xi$ be a 1-cocycle. Then $\Phi =\beta^{-1}(\Xi)$ has Bers potential \\ $F_{\Xi}(y)=\alpha^{-1}(\Xi)$ 
	for all $y\in\D$  given by
	\begin{align}
		\label{eq:GNF}
		F_{\Xi}(y)
		=
		\frac{1}{\tpi}\sum_{a\in\Ip}\oint_{\Con{a}}G_{N}(\cdot,y)\Xi[\gamma_{a}](\cdot).	
	\end{align}
\end{proposition}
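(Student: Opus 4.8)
The plan is to set $F_{\Xi}(y)$ equal to the right-hand side of \eqref{eq:GNF} and to establish the two assertions separately: first that $F_{\Xi}\in\FN$ is a genuine Bers potential, and second that its associated cocycle $\alpha(F_{\Xi})=\Xi_{F_{\Xi}}$ equals $\Xi$. Granting both, Proposition~\ref{prop:BersMap} (bijectivity of $\alpha$) forces $F_{\Xi}=\alpha^{-1}(\Xi)$, and commutativity of \eqref{eq:maps} then gives $\epsilon(F_{\Xi})=\beta^{-1}(\delta(\Xi))=\Phi$, so that $F_{\Xi}$ is indeed a potential for $\Phi=\beta^{-1}(\Xi)$. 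The whole argument is a residue/Stokes computation resting on the two defining conditions (I),(II) of $G_{N}$ together with its $\Gamma$-equivariance in each variable.

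For the first assertion I would apply $\tfrac1\pi\dyb$ to $F_{\Xi}$ and push the derivative through the $x$-integral. Since $y\in\D$ and $x\in\Con{a}$ we have $x\neq y$, so the simple-pole term $\tfrac1\pi\dyb\tfrac{1}{x-y}$ vanishes and only the regular part contributes; condition (II) replaces $\tfrac1\pi\dyb g_{N}^{R}$ by the projection kernel $p_{N}(x,y)$ of \eqref{eq:PNker}. Because $p_{N}(x,y)=\sum_{r}\phi^{\vee}_{r}(x)\overline{\phi_{r}(y)}\pmet^{1-N}(y)$ separates the variables, the $x$-integration factors out the constants
\[
c_{r}:=\frac{1}{\tpi}\sum_{a\in\Ip}\oint_{\Con{a}}\Phi^{\vee}_{r}\,\Xi[\gamma_{a}],
\]
and one reads off $\tfrac1\pi\dyb F_{\Xi}=\overline{\phi(y)}\,\pmet^{1-N}(y)$ with $\Phi=\sum_{r}\overline{c_{r}}\Phi_{r}\in\HgN$, which is precisely the Bers equation \eqref{eq:dF}. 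The growth condition \eqref{eq:Flim} is inherited from the corresponding behaviour of $G_{N}$ (equivalently of $\Psi_{N}$ and the $F_{r}$), so $F_{\Xi}\in\FN$.

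The heart of the proof is the second assertion, computing $\Xi_{F_{\Xi}}[\gamma_{b}]=F_{\Xi}|_{\gamma_{b}}-F_{\Xi}$ on a generator $\gamma_{b}$. Here I would exploit that $G_{N}(x,\cdot)$ is a \emph{genuine} (not merely quasiperiodic) weight $1-N$ form in its second variable: the fixed-contour function $w\mapsto \tfrac1{\tpi}\sum_{a\in\Ip}\oint_{\Con{a}}G_{N}(x,w)\Xi[\gamma_{a}](x)$ is therefore invariant under the weight $1-N$ action $w\mapsto\gamma_{b}w$. Consequently the whole variation $F_{\Xi}|_{\gamma_{b}}-F_{\Xi}$ must come from the discontinuity of this contour integral as the argument is carried from $\D$ into $\gamma_{b}\D\subset\Delta_{-b}$, i.e. from the pole of $G_{N}(x,w)$ in $x$ that crosses one of the curves $\Con{a}$, $a\in\Ip$. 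Tracking the orbit $x\in\Gamma w$, one checks that only the pole at $x=\gamma_{b}^{-1}w$ sweeps across $\Con{b}$ (every other orbit point stays interior to its disk), and its residue against $\Xi[\gamma_{b}]$ equals $\Xi[\gamma_{b}]|_{\gamma_{b}^{-1}}$; pulling back by $\gamma_{b}$ and using the cocycle-free identity $(\,\cdot\,)|_{\gamma_{b}^{-1}}|_{\gamma_{b}}=(\,\cdot\,)|_{\id}$ collapses this to exactly $\Xi[\gamma_{b}]$. Hence $\Xi_{F_{\Xi}}=\Xi$ on each generator, and therefore as cocycles.

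The main obstacle is the bookkeeping in this last step: correctly identifying which orbit pole crosses which boundary curve and with which orientation, and matching the residue (carrying its conformal-weight Jacobians) to the cocycle value $\Xi[\gamma_{b}]$ rather than to $-\Xi[\gamma_{b}^{-1}]$. This is the point where the orientation conventions for the $\Con{a}$ (encoded in $\Con{-a}=-\gamma_{a}\Con{a}$) and the equivariance of $G_{N}$ in \emph{both} variables must be used together; everything else reduces to the separation-of-variables and Stokes manipulations already employed in Proposition~\ref{prop:ThetaPhi} and Corollary~\ref{cor:PhiCont}.
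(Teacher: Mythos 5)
Your argument is correct, but it is genuinely different from the paper's. The paper's proof is essentially linear algebra: it expands $\Xi$ in the basis cocycles $\Xi_{r}$ attached to a Bers quasiform, with coefficients $\xi_{r}$ given by the period pairing of Corollary~\ref{cor:PhiCont}, and then evaluates the right-hand side of \eqref{eq:GNF} term by term using the decomposition $G_{N}=\Psi_{N}+\sum_{r}\Phi^{\vee}_{r}F_{r}$ of \eqref{eq:KNdefn}; the $\Psi_{N}$ contribution vanishes by Corollary~\ref{cor:PsiXi} and the rest reassembles to $\sum_{r}\xi_{r}F_{r}$. You instead verify the two defining properties of $\alpha^{-1}(\Xi)$ directly from conditions (I), (II) and the $\Gamma$-equivariance of $G_{N}$, never using its expression through $\Psi_{N}$. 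Your $\dyb$ computation is right (and correctly records the antilinearity, $\Phi=\sum_{r}\overline{c_{r}}\Phi_{r}$), and the jump analysis does close up: with the orientation convention $\partial\D=-\sum_{a}\Con{a}$, only the orbit pole $x=\gamma_{b}^{-1}w$ exits $\Delta_{b}$ through $\Con{b}$ as $w$ travels from $y$ to $\gamma_{b}y$, so the continuous potential minus the ($\Gamma$-invariant) contour formula acquires $+\Res_{x=\gamma_{b}^{-1}w}\left[g_{N}(x,w)\xi_{b}(x)\right]dw^{1-N}=\Xi[\gamma_{b}]|_{\gamma_{b}^{-1}}(w)$, which at $w=\gamma_{b}y$ is $\Xi[\gamma_{b}](y)$; checking generators suffices since $\Gamma$ is free. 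A small advantage of your route is that it treats an arbitrary cocycle uniformly, whereas the expansion $\Xi=\sum_{r}\xi_{r}\Xi_{r}$ used in the paper holds a priori only modulo coboundaries and tacitly requires that the coboundary part of $\Xi$ integrate against $G_{N}$ to the corresponding element of $\HON$. The price is the pole-crossing bookkeeping, which you rightly identify as the delicate step and which should be written out to make the proof complete.
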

\begin{proof} 
	Let $\{\Phi _{s}\} $  be a $\HgN$-basis with dual basis $\{\Phi ^{\vee}_{r}\}$, potentials $\{{F} _{s}(y)\}$ and cocycles  $\{\Xi_{s}\}$ associated with a Bers quasiform $\Psi_{N}$. Then $\Xi=\sum_{r=1}^{d_{N}}\xi_{r}\Xi_{r}$ where 
	\[
	\xi_{r}=\frac{1}{\tpi} \sum_{a\in\Ip}\oint_{\Con{a}}\Phi ^{\vee}_{r}\Xi[\gamma_{a}],
	\]
	using Corollary~\ref{cor:PhiCont}. Then  $F_{\Xi}(y):=\sum_{r=1}^{d_{N}}\xi_{r}F _{r}(y)$ is a potential for $\Phi(y):=\sum_{r=1}^{d_{N}}\xi_{r}\Phi _{r}(y)$ with cocycle $\Xi$. From \eqref{eq:KNdefn} we have
	\begin{align*}
		\frac{1}{\tpi}\sum_{a\in\Ip}\oint_{\Con{a}}G_{N}(\cdot,y)\Xi[\gamma_{a}](\cdot)=&
		\frac{1}{\tpi}\sum_{r=1}^{d_{N}}\xi_{r}\sum_{a\in\Ip}
		 \oint_{\Con{a}}\left(\Psi_{N}(\cdot,y)
		+\sum_{s=1}^{d_{N}}\Phi ^{\vee}_{s}(\cdot){F}_{s}(y)
		\right)\Xi_{r}[\gamma_{a}](\cdot)
		\\
		=&0+F_{\Xi}(y),
	\end{align*}
from Corollary~\ref{cor:PhiCont} and Corollary~\ref{cor:PsiXi} (shown below). 
\end{proof}
%

\section{Expansions of Meromorphic $N$-forms} \label{sec:H_N_Expansions}
We describe the expansion of meromorphic $N$-forms in terms of holomorphic $N$-forms and the Bers quasiform for genus $g\ge 2$. These expansions are natural generalizations of genus zero and one results. All of these expansions are mirrored in formal Zhu recursion formulas at each genus in the theory of vertex operator algebras \cite{TW}.  

Let $H_{N}(x)$ denote a meromorphic $N$-form for $N\ge 1$ on the Riemann surface $\Sg$ in the Schottky parameterization. Let $H_{N}(x)$ have poles at $x=y_{k}\in\D$ with $y_{k}\neq \infty$ for $k=1,\ldots ,n$
with residues
\begin{align*}
	\Res_{y_{k}}^{j}H_{N}:=\frac{1}{\tpi}\oint_{\calC_{k}}(z-y_{k})^{j}H_{N}(z)dz^{1-N}, \quad j\ge 0,\, k=1,\ldots,n,
\end{align*}
where $\calC_{k}$ is a simple Jordan curve surrounding $z=y_{k}$ but no other poles of $H_{N}(z)$.
We also define residues associated with the Schottky contours  $\calC_{a}$  below
\begin{align*}
	\Res_{w_{a}}^{\ell}H_{N}:=\frac{1}{\tpi}\oint_{\calC_{a}}z_{a}^{\ell}
	H_{N}(z)dz^{1-N}, \quad \ell\in\calL,\, a\in\I,
\end{align*}
for $z_{a}:=z-w_{a}$
for Schottky  parameter $w_{a}$. 
Recalling \eqref{eq:delconv}  we find:
\begin{proposition}\label{prop:GNexp}
	Let $H_{N}(x)$ be a meromorphic $N$-form on $\Sg$ with poles at $x=y_{k}\in\D$ with $y_{k}\neq \infty$ for $k=1,\ldots ,n$ in the Schottky parameterization. 
	\begin{enumerate}
		\item[(i)] Let $\{ \Phi_{r}(x)\}_{r=1}^{d_{N}}$ be a $\HgN$  basis  with Petersson dual basis $\{ \Phi_{r}^{\vee}(x)\}_{r=1}^{d_{N}}$ and  Bers potentials $F_{r}(y)=-\langle \Psi_{N}(\cdot,y),\Phi_{r}(\cdot)\rangle$ with $\Xi_{r}[\gamma](y)=F_{r}(\gamma y)-F_{r}(y)$. Then
		\begin{align}
			\label{eq:HNPhi}
			H_{N}(x)=&\sum_{r=1}^{d_{N}}\Phi_{r}^{\vee}(x)
			\sum_{a\in\Ip}\sum_{\ell\in\calL}\xi_{ra}^{\ell}
			\Res_{w_{a}}^{\ell}H_{N}
			+\sum_{k=1}^{n}\sum_{j\ge 0}\Psi_{N}^{(0,j)}(x,y_{k})dy_{k}^{N-1}
			\Res_{y_{k}}^{j}H_{N},
		\end{align} 
		with  $\xi^{\ell}_{ra}\in\C$ determined from $\Xi_{r}[\gamma_{a}](z)=\sum_{\ell\in\calL}\xi^{\ell}_{ra}z_{a}^{\ell}dz^{1-N}$.
		\item [(ii)]	For the $\HgN$ spanning set  $\{\Theta_{N,a}^{\ell}(x)\}$ of \eqref{eq:chiTheta} we have
		\begin{align}
			\label{eq:HNexp}
			H_{N}(x)=&\sum_{a\in\Ip}\sum_{\ell\in \calL}\Theta_{N,a}^{\ell}(x)
			\Res_{w_{a}}^{\ell}H_{N}
			+\sum_{k=1}^{n}\sum_{j\ge 0}\Psi_{N}^{(0,j)}(x,y_{k})dy_{k}^{N-1}
			\Res_{y_{k}}^{j}H_{N}.
		\end{align} 
		\item[(iii)] Let $P(x)=p(x)dx^{1-N}\in \HON$ with $\Xi_{P}[\gamma](z)=P(\gamma z)-P(z)$. Then
		\begin{align}
			\label{eq:HNP}
			-\sum_{a\in\Ip}\sum_{\ell\in\calL}p_{a}^{\ell}
			\Res_{w_{a}}^{\ell}H_{N}
			+\sum_{k=1}^{n}\sum_{\ell\in\calL}p^{(\ell)}(y_{k})
			\Res_{y_{k}}^{\ell}H_{N}=0,
		\end{align} 
		with $p_{a}^{\ell}\in\C$ determined from  $\Xi_{P}[\gamma_{a}](z)=\sum_{\ell\in\calL}p_{a}^{\ell}z_{a}^{\ell}dz^{1-N}$.
	\end{enumerate}	
\end{proposition}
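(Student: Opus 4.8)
The plan is to prove part (ii) first by a single contour-integral identity, then deduce part (i) from it by a change of basis, and finally establish part (iii) by rerunning the same argument with the Bers quasiform replaced by a polynomial form $P\in\HON$. The central object is the meromorphic $1$-form in $z$ (carrying the spectator weight $dx^{N}$ in $x$)
\begin{align*}
\omega(z):=\Psi_{N}(x,z)\,H_{N}(z),
\end{align*}
integrated over the boundary curves $\sum_{a\in\I}\Con{a}$ of the fundamental region $\D$. First I would evaluate $\tfrac{1}{\tpi}\sum_{a\in\I}\oint_{\Con{a}}\omega$ in two independent ways, and equate.

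The residue-theorem evaluation uses the orientation convention $\sum_{a\in\I}\Con{a}=-\partial\D$ already adopted in the proof of Proposition~\ref{prop:ThetaPhi}, so that $\tfrac{1}{\tpi}\sum_{a\in\I}\oint_{\Con{a}}\omega$ equals \emph{minus} the sum of the interior residues of $\omega$. The only poles inside $\D$ are at $z=x$, coming from the simple pole of $\Psi_{N}(x,z)$ of residue one, which contributes (after the orientation sign) exactly $H_{N}(x)$; and at $z=y_{k}$, coming from $H_{N}$. At $z=y_{k}$ the quasiform $\Psi_{N}(x,z)$ is holomorphic in $z$, so Taylor expanding its coefficient and matching against the definition of $\Res_{y_{k}}^{j}H_{N}$ produces the contribution $\sum_{j\ge 0}\Psi_{N}^{(0,j)}(x,y_{k})\,dy_{k}^{N-1}\,\Res_{y_{k}}^{j}H_{N}$, the second sum of \eqref{eq:HNexp}. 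These $j$-sums are finite since $H_{N}$ has finite-order poles.

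For the boundary evaluation I would pair $\Con{a}$ with $\Con{-a}$ using $\Con{-a}=-\gamma_{a}\Con{a}$: the $\Gamma$-invariance of $H_{N}$ together with the quasiperiodicity \eqref{eq:PsiNdefgamy} gives $\oint_{\Con{a}}\omega+\oint_{\Con{-a}}\omega=-\oint_{\Con{a}}\chi_{N}[\gamma_{a}](x,z)\,H_{N}(z)$. Inserting the canonical form of $\chi_{N}$ from Lemma~\ref{lem:chiN}, which by the normalization \eqref{eq:xinorm} reads $\chi_{N}[\gamma_{a}](x,z)=-\sum_{\ell\in\calL}\Theta_{N,a}^{\ell}(x)\,z_{a}^{\ell}\,dz^{1-N}$, and recognizing $\tfrac{1}{\tpi}\oint_{\Con{a}}z_{a}^{\ell}H_{N}(z)\,dz^{1-N}=\Res_{w_{a}}^{\ell}H_{N}$, yields $\sum_{a\in\Ip}\sum_{\ell\in\calL}\Theta_{N,a}^{\ell}(x)\Res_{w_{a}}^{\ell}H_{N}$. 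Equating the two evaluations gives part (ii). Part (i) then follows at once: comparing \eqref{eq:chiPhidual} and \eqref{eq:chiTheta} on each generator $\gamma_{b}$ and using \eqref{eq:xinorm} shows $\Theta_{N,a}^{\ell}(x)=\sum_{r=1}^{d_{N}}\Phi^{\vee}_{r}(x)\,\xi^{\ell}_{ra}$, and substituting this into \eqref{eq:HNexp} reorganizes the first sum into the claimed form.

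For part (iii) I would repeat the contour argument with $\omega$ replaced by $P(z)H_{N}(z)$, $P\in\HON$. Since $P$ is holomorphic on $\C$ there is no pole at $z=x$, so the interior residues reduce to the $z=y_{k}$ terms; Taylor expanding the polynomial $p$ produces $\sum_{\ell\in\calL}p^{(\ell)}(y_{k})\Res_{y_{k}}^{\ell}H_{N}$, the second sum of \eqref{eq:HNP}. The boundary pairing now uses the coboundary $\Xi_{P}[\gamma_{a}](z)=P(\gamma_{a}z)-P(z)=\sum_{\ell\in\calL}p^{\ell}_{a}z_{a}^{\ell}\,dz^{1-N}$ and reproduces $\sum_{a\in\Ip}\sum_{\ell\in\calL}p^{\ell}_{a}\Res_{w_{a}}^{\ell}H_{N}$; equating the two evaluations gives \eqref{eq:HNP}. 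I expect the main obstacle throughout to be the consistent bookkeeping of orientations and of the $dz^{1-N}$ weight factor when extracting residues — in particular pinning down the sign of the $z=x$ residue, and, in part (iii), verifying that although $\infty$ may lie in $\D$ no residue is lost there: the bound $\deg p\le 2N-2$ together with the regularity of $H_{N}$ at infinity shows $P(z)H_{N}(z)$ is holomorphic at $z=\infty$ in the coordinate $u=1/z$, so the identity closes. These are technical rather than conceptual points.
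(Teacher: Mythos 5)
Your proposal is correct and follows essentially the same route as the paper: a residue/contour computation on the fundamental domain $\D$, pairing $\Con{a}$ with $\Con{-a}=-\gamma_{a}\Con{a}$ and using the quasiperiodicity of $\Psi_{N}$ via $\chi_{N}$ (respectively the coboundary $\Xi_{P}$ in part (iii)) to produce the Schottky-contour residue terms. The only cosmetic difference is that you prove (ii) first and deduce (i) from the relation $\Theta_{N,a}^{\ell}=\sum_{r}\Phi_{r}^{\vee}\,\xi_{ra}^{\ell}$, whereas the paper proves (i) directly from \eqref{eq:chiPhidual} and then obtains (ii) from \eqref{eq:chiTheta}.
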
 
\begin{proof}
	(i) Consider a simple Jordan curve $\calC$ surrounding $x,y_{1},\ldots,y_{n}$ and the regions $\Delta_{a}$ for all $a\in\I$ so that $\oint_{\calC}\Psi_{N}(x,\cdot)H_{N}(\cdot)=0$. 
	Since $\Psi_{N}(x,z)\sim -(z-x)^{-1}\, dx^{N}dz^{1-N}$ for $x\sim z$ we find by Cauchy's theorem that
	\begin{align}
		0= -\tpi H_{N}(x)+\sum_{a\in\I}\oint_{\calC_{a}}\Psi_{N}(x,\cdot)H_{N}(\cdot)
		+\sum_{k=1}^{n}\oint_{\calC_{k}}\Psi_{N}(x,\cdot)H_{N}(\cdot).
		\label{eq:HNexp1}
	\end{align}
	Since $\calC_{-a}=-\gamma_{a} \calC_{a}$ we find using \eqref{eq:chiPhidual}
	that
	\begin{align}
		\notag
		\sum_{a\in\I}\oint_{\calC_{a}}\Psi_{N}(x,\cdot)H_{N}(\cdot)=&
		\sum_{a\in\Ip}\oint_{\calC_{a}(z)}\left(\Psi_{N}(x,z)-\Psi_{N}(x,\gamma_{a}z)\right)H_{N}(z)
		\\
		=& 
		\sum_{r=1}^{d_{N}}\Phi_{r}^{\vee}(x)
		\sum_{a\in\Ip}\oint_{\calC_{a}}\Xi_{r}[\gamma_{a}]H_{N}
		\label{eq:HNexp2}.
	\end{align}
	which determines the first term in \eqref{eq:HNPhi}. 
	Let $\Psi_{N}(x,z)=\psi_{N}(x,z)dx^{N}dz^{1-N}$. Then 
	$\psi(x,z)=\sum_{j\ge 0} \psi^{(0,j)}(x,y_{k})(z-y_{k})^{j}$ implies
	\begin{align}
		\notag
		\oint_{\calC_{k}(z)}\Psi_{N}(x,z)H_{N}(z)=& 
		\sum_{j\ge 0}\psi^{(0,j)}(x,y_{k})dx^{N}\oint_{\calC_{k}}(z-y_{k})^{j}H_{N}(z)dz^{1-N}
		\\
		\label{eq:HNexp3}
		=& \tpi\sum_{j\ge 0}\Psi^{(0,j)}(x,y_{k})dy_{k}^{N-1}
		\Res_{y_{k}}^{j}H_{N}.
	\end{align}
	Substituting \eqref{eq:HNexp2} and \eqref{eq:HNexp3} into \eqref{eq:HNexp1} completes the proof of (i).
	
	The proof of (ii) follows that of (i) where the identity \eqref{eq:HNexp2} is replaced by
	\begin{align*}
		\sum_{a\in\I}\oint_{\calC_{a}}\Psi_{N}(x,\cdot)H_{N}(\cdot)
		=
		\tpi \sum_{a\in\Ip}\sum_{\ell\in\calL}\Theta_{N,a}^{\ell}(x)
		\Res_{w_{a}}^{\ell}H_{N},
	\end{align*}
	using \eqref{eq:chiTheta}.
	(iii) follows from a similar analysis of $\oint_{\calC}PH_{N}=0$. 
\end{proof}
\begin{remark}
Following Remark~\ref{rem:Psi unique} it follows that 
\eqref{eq:HNPhi} holds for $\Psitilde_{N}(x,y)$ of \eqref{eq:Psihat} with cocycles $\widetilde{\Xi}_{r}=\Xi_{r}+\Xi_{P_{r}}$. This result also follows from \eqref{eq:HNP}.
\end{remark}

For a $\HgN$ basis $\{ \Phi_{r}(x)\}$ with cocycles $\{\Xi_{r}\}$ as in Proposition~\ref{prop:GNexp}~(i) we find:
\begin{corollary}\label{cor:PsiXi}
For all $y\in\D$ and $r\in\{1,\ldots,d_{N}\}$ we have
\[
\sum_{a\in\Ip}\oint_{\Con{a}}\Psi_{N}(\cdot,y)\Xi_{r}[\gamma_{a}](\cdot)=0.
\]
\end{corollary}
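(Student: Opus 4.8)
The plan is to express the given sum over $\Ip$ as a single integral of $\Psi_{N}(\cdot,y)F_{r}$ around the entire boundary $\del\D$, and then evaluate it by Stokes' theorem, exploiting that $\Psi_{N}(\cdot,y)$ is a genuine (meromorphic) $N$-differential in its first slot. Write $K_{r}(y):=\sum_{a\in\Ip}\oint_{\Con{a}}\Psi_{N}(\cdot,y)\,\Xi_{r}[\gamma_{a}]$. Using $\Xi_{r}[\gamma_{a}]=F_{r}|_{\gamma_{a}}-F_{r}$ from \eqref{eq:xiF}, the invariance $\Psi_{N}(\gamma_{a}z,y)=\Psi_{N}(z,y)$ from \eqref{eq:GEM3rsx}, and the relation $\Con{-a}=-\gamma_{a}\Con{a}$ (exactly as in the proof of Proposition~\ref{prop:ThetaPhi}), I would show $K_{r}(y)=-\sum_{a\in\I}\oint_{\Con{a}}\Psi_{N}(\cdot,y)F_{r}$. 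The change of variables $z\mapsto\gamma_{a}z$ applied on each $\Con{-a}$ contour is precisely what converts the potential $F_{r}$ into the cocycle $\Xi_{r}[\gamma_{a}]$, and collapses the $\I$-sum back to an $\Ip$-sum of cocycle integrals.

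Next I would apply Stokes' theorem to the $1$-form $\omega:=\Psi_{N}(\cdot,y)F_{r}$ on $\D$ with a small disk $D_{\epsilon}$ about the interior pole $z=y$ removed. Away from this pole, $\Psi_{N}(\cdot,y)$ is a holomorphic $N$-differential in $z$, so the coordinate-free Bers equation \eqref{eq:dPhiF} gives $d\omega=\tpi\,\Psi_{N}(\cdot,y)\,B_{\Phi_{r}}$ on $\D\setminus\{y\}$. Integrating and letting $\epsilon\to 0$, the bulk term converges (the integrand carries only a locally integrable simple-pole singularity) to $\tpi\iint_{\D}\Psi_{N}(\cdot,y)B_{\Phi_{r}}=\tpi\langle\Psi_{N}(\cdot,y),\Phi_{r}\rangle=-\tpi F_{r}(y)$, by \eqref{eq:FPsi}. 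On the other side, the boundary of $\D\setminus D_{\epsilon}$ contributes both the outer curves (which reproduce $K_{r}(y)$ after the first step) and the small circle $\calC_{y}$ around $y$; since $\Psi_{N}(z,y)$ has residue one in $z$ at $z=y$ while $F_{r}$ is regular there, one gets $\oint_{\calC_{y}}\omega=\tpi F_{r}(y)$.

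Assembling these contributions with the orientation convention of Proposition~\ref{prop:ThetaPhi} yields the identity $-\tpi F_{r}(y)=K_{r}(y)-\tpi F_{r}(y)$, so the two copies of $F_{r}(y)$ cancel and $K_{r}(y)=0$, as required. I expect the main obstacle to be the careful bookkeeping around the interior pole at $z=y$: one must excise it, verify integrability of $\Psi_{N}(\cdot,y)B_{\Phi_{r}}$ so that the bulk limit exists, and track the orientations so that the residue contribution $\tpi F_{r}(y)$ coming from $\calC_{y}$ matches exactly the $-\tpi F_{r}(y)$ produced by the Petersson integral. Everything else is the same Stokes-plus-invariance manipulation already used to prove Proposition~\ref{prop:ThetaPhi}.
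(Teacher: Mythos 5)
Your proof is correct, but it takes a genuinely different route from the paper's. The paper obtains the corollary as an almost immediate consequence of the expansion formula of Proposition~\ref{prop:GNexp}~(i): applying \eqref{eq:HNPhi} to $H_{N}(x)=\Psi_{N}(x,y)dy^{N-1}$, which has a single simple pole in $\D$ at $x=y$ with residue one, the pole terms reproduce $\Psi_{N}(x,y)$ exactly, so the holomorphic part $\sum_{r}\Phi_{r}^{\vee}(x)\cdot\frac{1}{\tpi}K_{r}(y)$ must vanish, and linear independence of the dual basis $\{\Phi_{r}^{\vee}\}$ kills each coefficient $K_{r}(y)$ separately. You instead bypass Section 3 entirely and rerun the Stokes argument of Proposition~\ref{prop:ThetaPhi} with the holomorphic $\Theta$ replaced by the meromorphic $\Psi_{N}(\cdot,y)$: the contour manipulation via \eqref{eq:xiF}, \eqref{eq:GEM3rsx} and $\Con{-a}=-\gamma_{a}\Con{a}$ is identical, and the new ingredient is the excised pole at $z=y$, whose residue contribution $\tpi F_{r}(y)$ exactly cancels the bulk term $\tpi\langle\Psi_{N}(\cdot,y),\Phi_{r}\rangle=-\tpi F_{r}(y)$ coming from \eqref{eq:dPhiF} and \eqref{eq:FPsi}; your orientation and residue bookkeeping checks out (the only pole of $\Psi_{N}(\cdot,y)$ in $\D$ is at $z=y$, the others sitting inside the $\Delta_{a}$). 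What each approach buys: the paper's is shorter once Proposition~\ref{prop:GNexp} is available, but it leans on that machinery and on linear independence of the Petersson dual basis; yours is self-contained given only the Section 2 potential theory, proves the vanishing for each $r$ directly without invoking linear independence, and makes transparent \emph{why} the identity holds — it is precisely the statement that the "anomalous" residue of $\Psi_{N}(\cdot,y)$ at $y$ is compensated by its own Petersson pairing defining $F_{r}(y)$.
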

\begin{proof}
Write $\Psi_{N}(x,y)=H_{N}(x)dy^{1-N}$ for meromorphic $N$-form  $H_{N}(x)$ with a unique simple pole at $x=y\in \D$ with residue $1$. Then Proposition~\ref{prop:GNexp}~(i) and \eqref{eq:HNexp2} imply
	\begin{align*}
		\Psi_{N}(x,y)=\sum_{r=1}^{d_{N}}\Phi_{r}^{\vee}(x)
		\sum_{a\in\Ip}\frac{1}{\tpi}\oint_{\calC_{a}}\Psi_{N}(\cdot,y)\Xi_{r}[\gamma_{a}](\cdot)
		+\Psi_{N}(x,y).
	\end{align*}
	The result follows from linear independence of the $\HgN$ dual basis elements.
\end{proof}

\section{Sewing Expansion Formulas for $\Psi_{N}$ and $\Theta_{N,a}^{\ell}$ } \label{sec:Sew}
In this section we describe an expansion formula for $\Psi_{N}(x,y)$ and the holomorphic $N$-form spanning set $\{\Theta_{N,a}^{\ell}\}$ in terms of the sewing parameters $\rho_{a}$.
These expressions are natural extensions of expansions for the classical bidifferential of the second kind and holomorphic 1-forms  \cite{Fa,Y,MT,T}. 
This expression is much more suitable for applications in vertex operator algebra theory \cite{TW} than the defining Poincar\'e sum \eqref{eq:PsiNdef}. This result also determines expansion formulas for $\omega_{N}$ of \eqref{eq:Lambda} and $\Theta_{N,a}^{\ell}$ of \eqref{eq:chiTheta} and leads to an infinite determinant product formula related the determinant of a Laplacian acting on $N$-differentials on $\Sg$ \cite{McIT}.

\subsection{Sewing expansion for $\Psi_{N}(x,y)$}
For $\Pi_{N}(x,y)$ of \eqref{eq:PiNdef} we have
\begin{lemma}
	\label{lem:inteqn2} 
	For $N\ge 1$ and all $x,y\in \D$ we have 
	\begin{align}
		\Psi_{N} (x,y)=\Pi_{N}(x,y)
		-\frac{1}{\tpi}\sum_{a\in\I}\,
		\oint\limits_{\calC_{a}}\Psi_{N} (x,\cdot) \;\Pi_{N}(\cdot,y).  \label{eq:PsiNint}
	\end{align}%
\end{lemma}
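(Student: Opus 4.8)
The plan is to regard the integrand $\Psi_{N}(x,z)\,\Pi_{N}(z,y)$ as a single meromorphic $1$-form in the variable $z$ (it carries weight $1-N$ in $z$ from $\Psi_{N}$ and weight $N$ in $z$ from $\Pi_{N}$, hence is a genuine differential of weight $1$ in $z$) and to evaluate the total contour integral $\sum_{a\in\I}\oint_{\Con{a}}$ by the residue theorem, exactly in the spirit of the contour manipulation already used in the proof of Proposition~\ref{prop:GNexp}. First I would fix $x,y\in\D$ in generic position (distinct from one another, from the $A_{j}$, and from the orbit points $\gamma x$ with $\gamma\neq\id$), the general case following by continuity. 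I would then locate the poles of the $1$-form in $z$: from $\Pi_{N}(z,y)=\pi_{N}(z,y)dz^{N}dy^{1-N}$ in \eqref{eq:PiNdef} there is a simple pole at $z=y$ and simple poles at each $z=A_{j}$, while the numerator $\prod_{j\in\calL}(y-A_{j})$ is constant in $z$; from $\Psi_{N}(x,z)=\sum_{\gamma}\Pi_{N}(\gamma x,z)$ there are simple poles at $z=\gamma x$ for every $\gamma\in\Gamma$. Since $\D$ is a fundamental domain, the only poles lying in $\D$ rather than inside some $\Delta_{a}$ are $z=x$ (coming solely from the $\gamma=\id$ term) and $z=y$; all the $A_{j}$ and all $\gamma x$ with $\gamma\neq\id$ lie in the limit set or its orbit and hence inside $\bigcup_{a}\Delta_{a}$.

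Next I would take a large positively oriented circle $\calC$ enclosing $x$, $y$ and every region $\Delta_{a}$, and argue that $\oint_{\calC}\Psi_{N}(x,\cdot)\Pi_{N}(\cdot,y)=0$. This is the step needing the most care. I would justify it by a degree count on $\Chat$: the weight-$N$ differential $\Pi_{N}(\cdot,y)$ has $2N$ simple poles in $\C$ (at $y$ and the $2N-1$ points $A_{j}$) and no finite zeros, so since the canonical class to the power $N$ has degree $-2N$ on $\Chat$ it is regular and nonvanishing at $z=\infty$; likewise $\Psi_{N}(x,\cdot)$ is a weight-$(1-N)$ differential regular at $\infty$ away from its poles. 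Hence the product $1$-form is holomorphic near $\infty$ with vanishing residue there, and $\oint_{\calC}=0$. Deforming $\calC$ inward past $x$ and $y$ and collapsing it onto the curves $\Con{a}=\partial\Delta_{a}$, which enclose all the remaining (orbit and limit-set) singularities, yields
\begin{align*}
0=\oint_{\calC}\Psi_{N}(x,\cdot)\,\Pi_{N}(\cdot,y)
=\tpi\Res_{z=x}\!\left[\Psi_{N}(x,\cdot)\Pi_{N}(\cdot,y)\right]
+\tpi\Res_{z=y}\!\left[\Psi_{N}(x,\cdot)\Pi_{N}(\cdot,y)\right]
+\sum_{a\in\I}\oint_{\Con{a}}\Psi_{N}(x,\cdot)\,\Pi_{N}(\cdot,y),
\end{align*}
where the orientation convention $\Con{a}=\partial\Delta_{a}$ fixed earlier ensures the signs come out as above.

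Finally I would compute the two residues. Near $z=x$ only the $\gamma=\id$ term of $\Psi_{N}$ is singular, with $\Psi_{N}(x,z)\sim-(z-x)^{-1}dx^{N}dz^{1-N}$, so $\Res_{z=x}\!\left[\Psi_{N}(x,\cdot)\Pi_{N}(\cdot,y)\right]=-\Pi_{N}(x,y)$; near $z=y$ the factor $\Pi_{N}(z,y)$ has residue $1$ (since $\prod_{j}\tfrac{y-A_{j}}{y-A_{j}}=1$) while $\Psi_{N}(x,\cdot)$ is regular, so $\Res_{z=y}\!\left[\Psi_{N}(x,\cdot)\Pi_{N}(\cdot,y)\right]=\Psi_{N}(x,y)$. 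Substituting gives $0=\tpi\bigl(\Psi_{N}(x,y)-\Pi_{N}(x,y)\bigr)+\sum_{a\in\I}\oint_{\Con{a}}\Psi_{N}(x,\cdot)\Pi_{N}(\cdot,y)$, which rearranges at once to \eqref{eq:PsiNint}. The main obstacle, as indicated, is the vanishing of the integral over $\calC$ (equivalently the absence of a residue at infinity); beyond that one need only keep track of orientations and invoke the meromorphy and convergence of $\Psi_{N}$ established earlier so as to treat it as a single meromorphic $1$-form along the contours.
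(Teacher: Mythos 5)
Your proposal is correct and follows essentially the same route as the paper: integrate $\Psi_{N}(x,\cdot)\Pi_{N}(\cdot,y)$ over a large Jordan curve enclosing $x$, $y$ and all the $\Delta_{a}$, note the integral vanishes, and collect the residues $-\Pi_{N}(x,y)$ at $z=x$ and $\Psi_{N}(x,y)$ at $z=y$ together with the boundary contributions $\sum_{a}\oint_{\Con{a}}$. The only difference is that you supply an explicit degree-count justification for the vanishing of $\oint_{\calC}$ (regularity of the $1$-form at $\infty$), which the paper leaves implicit.
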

\begin{proof}
	Let $\calC$ be a simple Jordan curve whose interior region contains  $\Delta_{a}$ for all $a\in \I$ and the points $x,y$. Then 
\begin{align*}
0=&	\frac{1}{\tpi}\oint\limits_{\calC}\Psi_{N} (x,\cdot) \;\Pi_{N}(\cdot,y)
\\
=&-\Pi_{N}(x,y)+\Psi_{N}(x,y)
+\frac{1}{\tpi}\sum_{a\in\I}\,
\oint\limits_{\calC_{a}} \Psi_{N} (x,\cdot) \;\Pi_{N}(\cdot,y),
\end{align*}
since $\Psi_{N}(x,z) \sim (x-z)^{-1}dx^{N}dz^{1-N}$ for $z\sim x$ and $\pi_{N}(z,y)\sim(z-y)^{-1}$ for $z\sim y$.
\end{proof} 
We note that 
\begin{align*}
	\pi_{N}(x,y)&
	=\frac{1}{x-y}+\sum_{\ell\in\calL} f_{\ell}(x)y^{\ell},
\end{align*}
for $f_{\ell}(x)=-\sum_{i\in\calL}\frac{1}{x-A_{i}}p_{i}^{(\ell)}(0)$ where $p_{i}(y)=\prod_{j\neq i}\frac{y-A_{j}}{A_{i}-A_{j}}\in \Poly_{2N-2}$ recalling \eqref{eq:delconv}.
Define
\begin{align}
	L_{b}^{n}(x):=&\frac{\rho_{b} ^{\half n}}{\tpi}
	\oint\limits_{\calC_{b}(y)}
	\Pi_{N}(x,y)y_{b}^{-n-1}\,dy^{N}
	\label{eq:Ldef}
	=\rho_{b}^{\half n} \pi_{N}^{(0,n)}(x,w_{b})dx^{N},  
	\\
	R_{a}^{m}(y):=&\frac{(-1)^{N}\rho_{a} ^{\half (m+1)}}{\tpi}
	\oint\limits_{\calC_{-a}(x)}
	\Pi_{N}(x,y)x_{-a}^{-m-1}\,dx^{1-N}
	\label{eq:Rdef}
	\\
	=&(-1)^{N}\rho_{a}^{\half (m+1)} \pi_{N}^{(m,0)}(w_{-a},y)dy^{1-N},  \notag
\end{align}
where $y_{b}:=y-w_{b}$ and $x_{-a}:=x-w_{-a}$.
We let $L(x)=(L_{b}^{n}(x))$ and 
$R(y)=(R_{a}^{m}(y))$ denote 
infinite row and column vectors, respectively, indexed by $a,b\in \I$ and $m,n\ge 0$.
We  define the doubly indexed matrix $A=(A_{ab}^{mn})$ with components 
\begin{align}
	A_{ab}^{mn} :=&
	=
	\frac{\rho_{b}^{\half n}}{\tpi}
	\oint\limits_{\calC_{b}(y)} R_{a}^{m}(y)  y_{b}^{-n-1}\,dy^{N} 
	\label{eq:Adef}
	=
	\begin{cases}(-1)^{N}\rho_{a}^{\half (m+1)}\rho_{b}^{\half n}\pi_{N}^{(m,n)}(w_{-a},w_{b}),&a\neq-b,\\ 
		(-1)^{N}\rho_{a}^{\half(m+n+1)}e^{mn}(w_{-a}),&a=-b,
	\end{cases}
\end{align}
where $e^{mn}(y):=\sum_{\ell\in\calL}\binom{\ell}{n}f_{\ell}^{(m)}(y)y^{\ell-n}$.
We note that $A_{a,-a}^{mn}=0$ for all $n> 2N-2$.
We also define a doubly indexed matrix $\Delta$ with components
\begin{align}
	\Delta_{ab}^{mn}:=\delta_{m,n+2N-1}\delta_{ab}.\label {eq:Deltadef}
\end{align}
Lastly, we define $\Ltilde(x):=L(x)\Delta$ and $\Atilde:=A\Delta $. These are independent of the $f_{\ell}(y)$ terms in $\pi_{N}(x,y)$ and are given by 
\begin{align}
	\label{eq:Ltilde}
	\Ltilde_{b}^{n}(x)&=
	\frac{\rho_{b}^{\half (n+2N-1)}}{(x-w_{b})^{n+2N}}dx^{N},
	\\
	\label{eq:Atilde}
	\Atilde_{ab}^{mn}&=
	\begin{cases}
		\displaystyle{(-1)^{m+N}\binom{m+n+2N-1}{m}
			\frac{\rho_{a}^{\half (m+1)}\rho_{b}^{\half (n+2N-1)}}
			{(w_{-a}-w_{b})^{m+n+2N}}},&a\neq-b\\ 
		0,&a=-b.
	\end{cases}
\end{align}

Let $I$ denote the infinite identity matrix and we define $(I-\Atilde)^{-1}:=I+\sum_{k\ge 1} \Atilde^{k}$.
We generalize arguments of \cite{Y}, \cite{MT} and \cite{T} to find that  
$\Psi_{N}(x,y)$ can be expressed in terms of $\Pi_{N}(x,y),\Ltilde,R$ and $\Atilde$ as follows:
\begin{proposition}
	\label{prop:Psisew} 
	For all $N\ge 1$ and $x,y\in \D$ 
	\begin{align}
		\Psi_{N} (x,y)=\Pi_{N}(x,y)+\Ltilde(x)
		(I-\Atilde )^{-1}R(y),
		\label{eq:Psisew}
	\end{align}
	where $(I-\Atilde)^{-1}$ is convergent for all $(\bm{w,\rho})\in \Cg$. 
\end{proposition}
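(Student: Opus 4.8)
The plan is to solve the integral equation of Lemma~\ref{lem:inteqn2} by iteration, recognizing the resulting series as a Neumann (geometric) series for the resolvent $(I-\Atilde)^{-1}$. The starting point is \eqref{eq:PsiNint}, which expresses $\Psi_{N}(x,y)$ as $\Pi_{N}(x,y)$ minus a sum of contour integrals of $\Psi_{N}(x,\cdot)$ against $\Pi_{N}(\cdot,y)$ around the curves $\calC_{a}$. First I would substitute \eqref{eq:PsiNint} into itself repeatedly, feeding the right-hand side back into the $\Psi_{N}(x,\cdot)$ appearing under each integral. At zeroth order this gives $\Pi_{N}(x,y)$; at first order it produces $-\frac{1}{\tpi}\sum_{a}\oint_{\calC_{a}}\Pi_{N}(x,\cdot)\Pi_{N}(\cdot,y)$; higher orders generate nested multiple contour integrals of products of $\Pi_{N}$'s around the various $\calC_{a}$.

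The key step is to re-index these nested integrals in terms of the quantities $L,R,A$ defined in \eqref{eq:Ldef}--\eqref{eq:Adef}. The crucial observation is that each $\Pi_{N}$ is holomorphic in $x$ (respectively $y$) away from the relevant curve, so it admits a Laurent/Taylor expansion in the variables $y_{b}=y-w_{b}$ and $x_{-a}=x-w_{-a}$; the contour integrals $\frac{1}{\tpi}\oint_{\calC_{b}}(\cdot)y_{b}^{-n-1}dy^{N}$ and the companion for $x_{-a}$ are precisely the coefficient-extraction operations built into the definitions of $L_{b}^{n}$, $R_{a}^{m}$ and $A_{ab}^{mn}$. I would show that a $k$-fold nested integral collapses, after inserting complete sets of these moment expansions at each intermediate curve, to the matrix product $L\,A^{k-1}R$, so that summing over $k\ge 1$ yields $\Pi_{N}+L(I-A)^{-1}R$. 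The reduction from $L,A$ to the tilded quantities $\Ltilde=L\Delta$, $\Atilde=A\Delta$ follows from the fact that the diagonal shift matrix $\Delta$ of \eqref{eq:Deltadef} encodes the pairing between the $y_{b}^{\ell}$-indexed holomorphic pieces (for $\ell\in\calL$) and the negative-power residue extractions; because $A_{a,-a}^{mn}=0$ for $n>2N-2$ and the holomorphic $f_{\ell}$-terms carry indices in $\calL$, the $f_{\ell}$-dependent contributions telescope away, leaving the $\Ltilde,\Atilde$ of \eqref{eq:Ltilde}--\eqref{eq:Atilde} which are manifestly independent of the $\{A_j\}$ choice. This last cancellation is what makes the final formula intrinsic despite the non-uniqueness noted in Remark~\ref{rem:Psi unique}.

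The main obstacle I expect is twofold: first, justifying the interchange of the infinite Poincar\'e-type summation implicit in $\Psi_{N}$ with the iterative contour manipulations, and more seriously, proving \emph{convergence} of the Neumann series $(I-\Atilde)^{-1}=I+\sum_{k\ge1}\Atilde^{k}$ for all $(\bm{w,\rho})\in\Cg$. Convergence is the genuinely analytic heart of the statement. The strategy here, following \cite{Y,MT,T}, is to bound the matrix entries $\Atilde_{ab}^{mn}$ using the explicit form \eqref{eq:Atilde}: the factors $(w_{-a}-w_{b})^{-(m+n+2N)}$ are controlled because the regions $\Delta_{a}$ are disjoint (so $|w_{-a}-w_{b}|$ is bounded below for $a\neq -b$), while the powers $\rho_{a}^{\half(m+1)}\rho_{b}^{\half(n+2N-1)}$ with $0<|q_a|<1$ supply geometric decay in $m,n$ through \eqref{eq:rhoadef}. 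I would introduce a suitable weighted $\ell^{\infty}$ or $\ell^{2}$ norm on the index set $(a,m)$ adapted to these decay rates, show $\Atilde$ is a contraction (or at least has operator norm $<1$ after finitely many terms, equivalently spectral radius $<1$) uniformly on compact subsets of $\Cg$, and invoke the standard geometric-series bound. Once convergence is secured, uniqueness of the solution to the integral equation \eqref{eq:PsiNint} identifies the convergent series with $\Psi_{N}$, completing the proof.
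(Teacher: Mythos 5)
Your algebraic skeleton (moment expansions around the curves $\Con{a}$, re-indexing via $\Delta$, a geometric series for the resolvent) matches the paper's, but your route to the formula and especially to convergence is genuinely different, and the convergence step as you propose it has a real gap. The paper does not iterate the integral equation of Lemma~\ref{lem:inteqn2} into a nested series of contour integrals. It works with the actual $\Psi_N$ throughout: it expands $\Pi_N(z,y)$ in $z_{-b}$, changes variables by $\gamma_b$ (this is where $\Delta$ enters, via $\rho_{b}^{-\half(n+1)}z_{-b}^{n}dz_{-b}^{N}=(-1)^{N}\rho_{b}^{\half(n+2N-1)}z_{b}^{-n-2N}dz_{b}^{N}$, and where the quasi-periodicity corrections $\chi_N$ are killed by $\oint_{\Con{b}}z_b^{\ell-n-2N}dz=0$ --- not by a telescoping of the $f_\ell$ terms as you suggest), and thereby obtains the exact relations $\Psi_N=\Pi_N+\Utilde R$, $\Utilde=\Ltilde+\Utilde\Atilde$ and $\Ytilde=(I+\Ytilde)\Atilde$, where $\Utilde$ and $\Ytilde$ are moment matrices of $\Psi_N$ itself and hence are already known to be convergent because the Poincar\'e series defining $\Psi_N$ is holomorphic on $\Cg$.

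The decisive difference is how $(I-\Atilde)^{-1}=I+\sum_{k\ge1}\Atilde^{k}$ is shown to converge on all of $\Cg$. You propose a direct weighted-norm contraction estimate on $\Atilde$. That is unlikely to succeed on the whole of $\Cg$: the bounds you describe only control $\Atilde$ when the $|\rho_a|$ are small relative to the separations $|w_{-a}-w_b|$, i.e., on a proper subdomain, and your own hedge (``uniformly on compact subsets'') would not deliver the stated claim. The paper's argument is indirect: since $\Atilde_{ab}^{mn}=O\bigl(\rho_a^{\half}\rho_b^{\half}\bigr)$, only finitely many terms of $\sum_k\Atilde^k$ contribute at any fixed order of the power series in the $\rho_a^{1/2}$, so the formal geometric series agrees order by order with the power series of the convergent matrix $\Ytilde$; convergence of $(I-\Atilde)^{-1}$ is therefore inherited from the holomorphy of $\Psi_N$ rather than proved by operator estimates. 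If you want to complete your version, replace the contraction argument by this identification of $\sum_{k\ge1}\Atilde^{k}$ with the moment matrix $\Ytilde$ of $\Psi_N$.
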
 
\begin{proof}
	Expand $\Pi_{N}(z,y)$ in $z_{-b}:=z-w_{-b}$ using \eqref{eq:Rdef} to obtain
	\[
	\Pi_{N}(z,y)=(-1)^{N}\sum_{n\ge 0}\rho_{b}^{-\half(n+1)}R_{b}^{n}(y)z_{-b}^{n}\,dz^{N}.
	\]
	Lemma~\ref{lem:inteqn2} implies
	\begin{align*}
		\Psi_{N} (x,y)
		=& \Pi_{N}(x,y)
		+(-1)^{N+1}\sum_{b\in\I}\sum_{n\ge 0}R_{b}^{n}(y)
		\frac{\rho_{b}^{-\half (n+1)}}{\tpi}
		\oint\limits_{\calC_{-b}(z)}
		\Psi_{N} (x,z)z_{-b}^{n}\, dz^{N}.
	\end{align*}
	Changing variables via $\gamma_{b}:z_{b}\rightarrow z_{-b} = \rho_{b}/z_{b}$ one finds that 
	\[
	\rho_{b}^{-\half (n+1)}z_{-b}^{n}dz_{-b}^{N}
	=(-1)^{N}\rho_{b}^{\half (n+2N-1)} z_{b}^{-n-2N}dz_{b}^{N},
	\]
	and 
	$\Psi_{N}(x,\gamma_{b}z)=\Psi_{N}(x,z)+\sum_{\ell\in\calL}\Theta_{N,b}^{\ell}(x)z_{b}^{\ell}dz^{1-N}$ from \eqref{eq:chiTheta} so that
	\begin{align*}
	\Psi_{N} (x,y)
	& =\Pi_{N}(x,y)
	+\sum_{b\in\I}\sum_{n\ge 0}R_{b}^{n}(y)
	\frac{\rho_{b}^{\half (n+2N-1)}}{\tpi}
	\oint\limits_{\calC_{b}(z)}
	\Psi_{N} (x,z)z_{b}^{-n-2N}\, dz^{N}
	\\
	&+\sum_{b\in\I}\sum_{n\ge 0}R_{b}^{n}(y)
	\frac{\rho_{b}^{\half (n+2N-1)}}{\tpi}
	\sum_{\ell\in\calL}\Theta_{N,b}^{\ell}(x)
	\oint\limits_{\calC_{b}(z)}z_{b}^{\ell-n-2N}\, dz.
	\end{align*}
	 But  $\oint_{\calC_{b}}z_{b}^{\ell-n-2N}dz=0$ for all $\ell\in\calL$ and $n\ge 0$ so we obtain
	\begin{align}
		\label{eq:PsiPU}
		\Psi_{N} (x,y)=\Pi_{N}(x,y)+\Utilde(x)\, R(y),
	\end{align}
	where $\Utilde(x)=U(x)\Delta$ for infinite row vector $U(x)$ with components 
	\begin{align*}
		U_{b}^{n}(x):=&
		\frac{\rho_{b}^{\half n}}{\tpi}
		\oint\limits_{\calC_{b}(z)}
		\Psi_{N} (x,z) z_{b}^{-n-1}\, dz^{N}.
	\end{align*}
	Integrating \eqref{eq:PsiPU} over $y\in \calC_{-b}$ and using \eqref{eq:Ldef}, \eqref{eq:Ltilde} and \eqref{eq:Atilde} we find
	\begin{align}
		\label{eq:UPR}
		\Utilde(x)=\Ltilde(x)+ \Utilde(x)\, \Atilde.
	\end{align}
	Taking $x$ moments of \eqref{eq:UPR} over $\calC_{-a}$
	we find 
	\begin{align}
		\label{eq:YRY}
		\Ytilde=(I+\Ytilde)\Atilde,
	\end{align}
	where
	$\Ytilde=Y\Delta$ for the  matrix $Y$ with components
	\begin{align}
		Y_{ab}^{mn}:=\frac{(-1)^{N}\rho_{a} ^{\half (m+1)}}{\tpi}
		\oint\limits_{\calC_{-a}(y)}
		U_{b}^{n}(x)x_{-a}^{-m-1}\,dx^{1-N}.
		\label{eq:Ydef}
	\end{align}
	Since $\Psi_{N}(x,y)$ is holomorphic for $(\bm{w,\rho})\in \Cg$ we find that $Y_{ab}^{mn}$ is convergent on $\Cg$.
	But \eqref{eq:YRY} has formal iterative solution
	\begin{align*}
		\Ytilde=\sum_{k\ge 1} \Atilde^{k}=(I-\Atilde )^{-1}-I.
	\end{align*}
	The truncation of the power series expansion of the components of 
	$ \Ytilde$ to $O\left(\rho_{a}^{M/2}\right)$, for all $a\in\I$ and for any integer $M\ge 0$,
	agrees with the equivalent truncation of $\sum_{k\ge 0}^{M} \Atilde^{k}$ since $\Atilde_{ab}^{mn}=O\left(\rho_{a}^{\half}\rho_{b}^{\half}\right)$ from \eqref{eq:Atilde}. Thus  $(I-\Atilde )^{-1}=I+\Ytilde$ is convergent for all $(\bm{w,\rho})\in \Cg$. 	
Then \eqref{eq:UPR} implies $\Utilde(x)=\Ltilde(x)(I-\Atilde )^{-1}$ and hence \eqref{eq:PsiPU} implies  \eqref{eq:Psisew}.
\end{proof}
\medskip

We now describe an expansion for the $\HgN$ spanning $N$-forms $\Theta_{N,a}^{\ell}(x)$ of \eqref{eq:chiTheta}. 
Consider the Laurent expansion
\[
\Psi_{N}(x,w_{a}+y_{a})= \ldots +T_{N,a}^{\ell}(x)y_{a}^{\ell}dy^{1-N}+\ldots 
\]
 where
\begin{align}
	\notag
	T_{N,a}^{\ell}(x):=&
	\Psi_{N}^{(0,\ell)}(x,w_{a})dy^{N-1}
	\label{eq:Tdefn}
= \rho_{a}^{-\half \ell}
	(L(x) +\Ltilde(x)(I-\Atilde)^{-1}A )_{a}^{\ell},
\end{align}
for $a\in\I$ and $\ell\in\calL$ employing \eqref{eq:Ldef} and \eqref{eq:Adef}. Using $\gamma_{a} (w_{a}+y_{a})=w_{-a}+y_{-a}$, with $y_{-a}=\rho_{a}/y_{a}$ we also find
\[
\Psi_{N}(x,\gamma_{a}y)= \ldots +
(-1)^{N+1}\rho_{a}^{N-1-\ell}T_{N,-a}^{2N-2-\ell}(x)y_{a}^{\ell}dy^{1-N}+\ldots 
\]
Comparing  $\Psi_{N}(x,y)-\Psi_{N}(x,\gamma_{a}y)$  to \eqref{eq:chiTheta} we obtain 
\begin{proposition}
For $a\in\Ip$ and $\ell\in\calL$
	\begin{align*}
		\Theta_{N,a}^{\ell}(x)=T_{N,a}^{\ell}(x)+(-1)^{N }\rho_{a}^{N-1-\ell}T_{N,-a}^{2N-2-\ell}(x).
	\end{align*}
\end{proposition}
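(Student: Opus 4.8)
The plan is to read off both sides of the quasiperiodicity relation \eqref{eq:PsiNdefgamy} for $\gamma=\gamma_{a}$ as series in the local parameter $y_{a}=y-w_{a}$ and to equate the coefficients of $y_{a}^{\ell}\,dy^{1-N}$ for each $\ell\in\calL$. First I would record the right-hand side: by \eqref{eq:chiTheta} together with the normalization \eqref{eq:xinorm}, for $a\in\Ip$ one has $\chi_{N}[\gamma_{a}](x,y)=-\sum_{\ell\in\calL}\Theta_{N,a}^{\ell}(x)\,y_{a}^{\ell}\,dy^{1-N}$, since $\Xi_{b}^{\ell}[\gamma_{a}]=\delta_{ab}\,y_{a}^{\ell}dy^{1-N}$ annihilates every summand except $b=a$. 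Hence \eqref{eq:PsiNdefgamy} gives
\begin{align*}
\Psi_{N}(x,y)-\Psi_{N}(x,\gamma_{a}y)=\sum_{\ell\in\calL}\Theta_{N,a}^{\ell}(x)\,y_{a}^{\ell}\,dy^{1-N},
\end{align*}
so that $\Theta_{N,a}^{\ell}(x)$ is precisely the coefficient of $y_{a}^{\ell}\,dy^{1-N}$ in the difference of the two expansions about $y=w_{a}$, and the problem reduces to computing each.

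The first expansion is nothing but the definition \eqref{eq:Tdefn}: since $\Psi_{N}(x,\cdot)$ is regular at $y=w_{a}$ (for $x\in\D$, $x\neq\infty$, the poles $y=\gamma x$ of $\Psi_{N}(x,\cdot)$ avoid $w_{a}$), the coefficient of $y_{a}^{\ell}\,dy^{1-N}$ in $\Psi_{N}(x,w_{a}+y_{a})$ is $T_{N,a}^{\ell}(x)$. The computational heart is the transformed term. Using \eqref{eq:gamaz}/\eqref{eq:Cansew} I would write $\gamma_{a}y=w_{-a}+y_{-a}$ with $y_{-a}=\rho_{a}/y_{a}$, expand $\Psi_{N}(x,\cdot)$ about $w_{-a}$ so the scalar part contributes $\sum_{\ell'}T_{N,-a}^{\ell'}(x)\,y_{-a}^{\ell'}=\sum_{\ell'}T_{N,-a}^{\ell'}(x)\,\rho_{a}^{\ell'}y_{a}^{-\ell'}$, and then \emph{carefully} transport the weight-$(1-N)$ differential. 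Because $d(\gamma_{a}y)=-\rho_{a}y_{a}^{-2}\,dy$, the factor $dy^{1-N}$ acquires $\bigl(-\rho_{a}y_{a}^{-2}\bigr)^{1-N}=(-1)^{N+1}\rho_{a}^{1-N}y_{a}^{2N-2}$. Multiplying the two contributions and re-indexing by $\ell'=2N-2-\ell$, which maps $\calL$ bijectively to itself, isolates the coefficient $(-1)^{N+1}\rho_{a}^{N-1-\ell}T_{N,-a}^{2N-2-\ell}(x)$ of $y_{a}^{\ell}\,dy^{1-N}$ in $\Psi_{N}(x,\gamma_{a}y)$, reproducing the second displayed expansion above.

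Subtracting the two expansions and equating coefficients of $y_{a}^{\ell}\,dy^{1-N}$ for each $\ell\in\calL$ then gives $\Theta_{N,a}^{\ell}(x)=T_{N,a}^{\ell}(x)-(-1)^{N+1}\rho_{a}^{N-1-\ell}T_{N,-a}^{2N-2-\ell}(x)$, and simplifying $-(-1)^{N+1}=(-1)^{N}$ yields the claim. I expect the only genuine obstacle to be bookkeeping rather than anything conceptual: keeping the sign $(-1)^{N+1}$, the power $\rho_{a}^{N-1-\ell}$, and the reflected index $2N-2-\ell$ mutually consistent through the substitution $y_{-a}=\rho_{a}/y_{a}$ and the weight-$(1-N)$ transformation. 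A secondary point worth one line is that the difference is a priori a \emph{polynomial} in $y_{a}$ of degree $\le 2N-2$ with no negative powers, guaranteed by \eqref{eq:chiTheta}, so that matching at the orders $\ell\in\calL$ is exhaustive and no further relations need be checked.
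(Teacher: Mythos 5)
Your proposal is correct and follows essentially the same route as the paper: expand both sides of the quasiperiodicity relation \eqref{eq:PsiNdefgamy} about $y=w_{a}$, use $y_{-a}=\rho_{a}/y_{a}$ together with $dy_{-a}^{1-N}=(-1)^{N+1}\rho_{a}^{1-N}y_{a}^{2N-2}\,dy^{1-N}$ to convert the expansion of $\Psi_{N}(x,\gamma_{a}y)$ about $w_{-a}$ into one in $y_{a}$ with the reflected index $2N-2-\ell$, and compare with $\chi_{N}[\gamma_{a}]$ via \eqref{eq:chiTheta} and \eqref{eq:xinorm}. Your closing observation that the difference is a priori a polynomial of degree $\le 2N-2$ in $y_{a}$, so that matching the orders $\ell\in\calL$ is exhaustive, is a worthwhile point that the paper leaves implicit.
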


\subsection{Expansion of $\omega_{N}(x,y)$}
 \eqref{eq:Lambda}and  \eqref{eq:Psisew} imply 
\begin{align}
	\label{eq:Lambdasew}
	\omega_{N}(x,y)=M_{N}(x,y)+\Ltilde(x)
	(I-\Atilde )^{-1}\Rtilde(y),
\end{align}
for column vector $\Rtilde(y)$ with components 
\begin{align}
	\label{eq:Rtilde}
	\Rtilde_{a}^{m}(y):=\left(R_{a}^{m}(y)\right)^{(2N-1)}dy^{2N-1}
	=(-1)^{m+N}F_{m}\frac{\rho_{a}^{\half(m+1)} }{(w_{-a}-y)^{m+2N}}dy^{N},
\end{align}
where 
\begin{align}
	\label{eq:Fm}
	F_{m}:=\binom{m+2N-1}{m}.
\end{align}
For $|x|>|y|$ we may write $M_{N}(x,y)$ as
\begin{align}
	M_{N}(x,y)=&\sum_{m\ge0}F_{m}x^{-m-2N}y^{m}dx^{N}dy^{N}=B(x) C(y),
	\label{eq:LBC}
\end{align}
where
$B(x)$, $ C (y)$ are row and column vectors with components given by  
\begin{align}
	B^{n}(x)
	:= &
	\frac{1}{\tpi F_{n}}
	\oint\limits_{\Con{0}(y)}
	y^{-n-1}M_{N}(x,y)dy^{1-N}
	=x^{-n-2N}\,dx^{N},
	\label{eq:Bdef}
	\\
	C^{m}(y)
	:= &
	\frac{1}{\tpi}
	\oint\limits_{\Con{\infty}(x)}
	x^{m+2N-1}M_{N}(x,y)dx^{1-N}
	=F_{m}y^{m}\,dy^{N},
	\label{eq:Cdef}
\end{align}
for $m,n\ge 0$ and Jordan curves $\Con{0} $  in the neighborhood of $0$ and $\Con{\infty}  $ 
in the neighborhood of $\infty$.
Note that $\Con{\infty}(x)\sim -\Con{0}(x^{-1})$ implies 
$C^{m}(y)= (-1)^{N}F_{m}B^{m}(y^{-1})$.

For  $\gamma\in \SL_{2}(\C)$ define a 
matrix  $D(\gamma)$ with components for $m,n\ge 0$ given by
\begin{align}
	\label{eq:Dmn}
	D^{mn}(\gamma)
	:= &
	\frac{1}{(\tpi)^2 F_{n}}
	\oint\limits_{\Con{\infty}(x)}
	\oint\limits_{\Con{0}(y)}
	x^{m+2N-1}y^{-n-1}M_{N}(x,\gamma y)\, dx^{1-N}dy^{1-N}
	\\
	\notag
	=&
	\frac{1}{\tpi F_{n}}
	\oint\limits_{\Con{0}}
	y^{-n-1}C^{m}(\gamma y)\,dy^{1-N}
	=
	\begin{cases}
		F_{m}F_{n}^{-1}
		\left(y_{\gamma}^{m}y_{\gamma}'^{N}\right)^{(n)}
		(0), & \gamma(0)\neq \infty,
		\\
		0, & \gamma(0)=\infty,
	\end{cases}
\end{align}
where $y_{\gamma}:=\gamma y$ and $y_{\gamma}':=\partial_{y}y_{\gamma}$. 
\begin{lemma} 
	\label{lem:Drep}
	(i) $B(x)D(\gamma)= B(\gamma^{-1} x)$ for $\gamma(0)\neq \infty$, 
	(ii) $D(\gamma) C (y)=  C (\gamma y)$ for $\gamma(0)\neq \infty$ 
	and (iii) $D(\gamma_1)D(\gamma_2)= D(\gamma_1\gamma_2)$ for $\gamma_{1}(0),\gamma_{2}(0),\gamma_{1}\gamma_{2}(0)\neq \infty$.
\end{lemma}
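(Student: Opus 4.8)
The plan is to exploit the single structural fact that $M_N(x,y)=\dfrac{dx^N dy^N}{(x-y)^{2N}}$ is a M\"obius-invariant bidifferential of weight $(N,N)$: writing $\gamma=\left(\begin{smallmatrix}a&b\\c&d\end{smallmatrix}\right)$, the identities $\gamma x-\gamma y=\frac{x-y}{(cx+d)(cy+d)}$ and $d(\gamma z)=(cz+d)^{-2}dz$ give $M_N(\gamma x,\gamma y)=M_N(x,y)$, equivalently $M_N(x,\gamma y)=M_N(\gamma^{-1}x,y)$. I would prove the three parts in the order (ii), (i), (iii), deriving each from the previous together with this invariance and the factorization $M_N(x,y)=B(x)C(y)$ of \eqref{eq:LBC}.

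For (ii) I would start from the one-variable form of \eqref{eq:Dmn}, namely $D^{mn}(\gamma)=\frac{1}{\tpi F_n}\oint_{\Con{0}(y')}y'^{-n-1}C^m(\gamma y')\,dy'^{1-N}$, and compute the $m$-th component of $D(\gamma)C(y)$ as $\sum_n D^{mn}(\gamma)C^n(y)$. Inserting $C^n(y)=F_n y^n dy^N$ from \eqref{eq:Cdef} and summing the geometric series $\sum_{n\ge 0}y'^{-n-1}y^n=(y'-y)^{-1}$ (valid on a contour $\Con{0}(y')$ enclosing $y$) collapses the sum to $\frac{dy^N}{\tpi}\oint_{\Con{0}(y')}\frac{c^m(y')}{y'-y}\,dy'$, where $c^m$ denotes the scalar part of $C^m(\gamma\,\cdot\,)$. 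Cauchy's theorem then extracts the residue at $y'=y$, giving $c^m(y)dy^N=C^m(\gamma y)$, which is precisely $D(\gamma)C(y)=C(\gamma y)$. The hypothesis $\gamma(0)\neq\infty$ guarantees $c^m$ is holomorphic near $0$, so the construction is well posed.

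For (i) I would not recompute an integral but instead pair on the right with $C(y)$: using (ii) and the factorization, $B(x)D(\gamma)C(y)=B(x)C(\gamma y)=M_N(x,\gamma y)=M_N(\gamma^{-1}x,y)=B(\gamma^{-1}x)C(y)$, the middle equality being M\"obius invariance. Since $B(x)D(\gamma)$ and $B(\gamma^{-1}x)$ are row vectors whose pairings $\sum_n V^n C^n(y)=\sum_n V^n F_n y^n\,dy^N$ against $C(y)$ agree for all $y$, comparing Taylor coefficients in $y$ (the $\{y^n\}$ being linearly independent) forces $B(x)D(\gamma)=B(\gamma^{-1}x)$, which is (i). Part (iii) then follows formally: applying (i) twice gives $B(x)D(\gamma_1)D(\gamma_2)=B(\gamma_1^{-1}x)D(\gamma_2)=B(\gamma_2^{-1}\gamma_1^{-1}x)=B((\gamma_1\gamma_2)^{-1}x)=B(x)D(\gamma_1\gamma_2)$ for all $x$, and linear independence of the components $\{x^{-m-2N}\}$ of $B$ then yields $D(\gamma_1)D(\gamma_2)=D(\gamma_1\gamma_2)$; the three regularity hypotheses are exactly what is needed to legitimately invoke (i) for $\gamma_1$, $\gamma_2$ and $\gamma_1\gamma_2$.

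The main obstacle I anticipate is analytic rather than algebraic: one must justify the contour manipulations---that $\Con{0}(y')$ can be chosen to enclose the pole at $y'=y$ while excluding the pole of $\gamma y'$ at $y'=\gamma^{-1}(\infty)$, and that the geometric series converges there---together with the interchange of summation and integration used to strip off $C(y)$. The conditions $\gamma(0)\neq\infty$ (and their analogues in (iii)) are precisely the hypotheses that keep the offending poles away from the contours near $0$ and $\infty$, so the algebra is forced through once the domains are pinned down; alternatively, reading the identities as equalities of formal power series in $y$ (respectively $x^{-1}$) sidesteps the convergence bookkeeping entirely.
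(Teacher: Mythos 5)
Your proposal is correct and rests on exactly the same ingredients as the paper's proof: M\"obius invariance $M_N(x,\gamma y)=M_N(\gamma^{-1}x,y)$, the factorization $M_N(x,y)=B(x)C(y)$, and extraction of Taylor/moment coefficients (the paper obtains (i) by taking $y$-moments of the invariance relation and (iii) from moments of $D(\gamma_1)C(\gamma_2 y)=C(\gamma_1\gamma_2 y)$, whereas you prove (ii) first and then deduce (i) and (iii) by pairing and linear independence --- a cosmetic reordering). The analytic caveats you flag about contour placement relative to the pole at $\gamma^{-1}(\infty)$ are precisely what the hypotheses $\gamma(0)\neq\infty$ are for, and the paper handles them with the same level of brevity.
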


\begin{proof} $\SL_{2}(\C)$ invariance of $M_{N}(x,y)$ implies
	\[
	M_{N}(\gamma^{-1}x, y)
	=
	M_{N}(x,\gamma y)
	=B(x)  C (\gamma y),
	\] 
	Provided $\gamma(0)\neq \infty$ we may compute non-zero $y$  moments of this equation to obtain (i). A similar argument leads to (ii).  
	(iii) follows by considering $y$ moments of the relation $D(\gamma_{1})C(\gamma_{2}y)=C(\gamma_{1}\gamma_{2}y)$. 
\end{proof}
\begin{remark}
	Note that (iii) implies that $D$  is not a representation of $\SL_{2}(\C)$ \cite{T} e.g. for $\gamma=\bigl(\begin{smallmatrix}
		0 & 1\\ 1 & 0
	\end{smallmatrix} \bigr)$ then 
	$I=D(\gamma^2)\neq D(\gamma)^2=0$. 
\end{remark}

Define $\lambda_{a},\mu_{a}\in \SL_{2}(\C)$ for $a\in \I$ by
\begin{align}
	\lambda_{a}:= 
	\begin{pmatrix}
		\rho_{a}^{-1/2} & 0\\
		0 & 1
	\end{pmatrix}
	\begin{pmatrix}
		1 & -w_{a}\\
		0 & 1
	\end{pmatrix},
	\quad
	\mu_{a}:= 
	\begin{pmatrix}
		\rho_{a}^{1/2} & 0\\
		0 & 1
	\end{pmatrix}
	\begin{pmatrix}
		0 & 1\\
		1 & -w_{-a}
	\end{pmatrix}.
	\label{eq:lammu}
\end{align}
The sewing condition \eqref{eq:Cansew} reads $\mu_{a}z'=\lambda_{a}z$ with Schottky group generator\footnote{Note that there is an error (6.4) in \cite{T}}
\begin{align}
	\gamma_{a}=\mu_{a}^{-1}\lambda_{a}. 
	\label{eq:gammaSchot}
\end{align}
\begin{lemma}\label{lem:LRABCD}
	We may re-express $\Ltilde(x)$, $\Rtilde(y)$ and $\Atilde$ as follows:
	\begin{align}
		\Ltilde_{b}^{n}(x)&=\rho_{b}^{\half(N-1)} B^{n}(\lambda_{b} x),
		\label{eq:BP}
		\\
		\Rtilde_{a}^{m}(y)&=\rho_{a}^{\half(1-N)} C^{m}(\mu_{a} y),
		\label{eq:CQ} 
		\\
		\Atilde_{ab}^{mn}&=
		\rho_{a}^{\half(1-N)} \rho_{b}^{\half(N-1)} D^{mn}(\mu_{a}\lambda_{b}^{-1}).
		\label{eq:DR}
	\end{align}
\end{lemma}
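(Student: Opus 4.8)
The plan is to establish all three identities by direct substitution, since each reduces to matching explicit rational expressions once the weight-$N$ transformation of the differentials is tracked. The first step is to record the M\"obius actions and their derivatives in closed form. From \eqref{eq:lammu} one computes $\lambda_{b}x=\rho_{b}^{-1/2}(x-w_{b})$, $\mu_{a}y=\rho_{a}^{1/2}/(y-w_{-a})$, and, by inverting $\lambda_{b}$, $\lambda_{b}^{-1}y=w_{b}+\rho_{b}^{1/2}y$. Differentiating gives $\del_{x}(\lambda_{b}x)=\rho_{b}^{-1/2}$ and $\del_{y}(\mu_{a}y)=-\rho_{a}^{1/2}(y-w_{-a})^{-2}$, which supply the Jacobian factors $d(\lambda_{b}x)^{N}=\rho_{b}^{-N/2}dx^{N}$ and $d(\mu_{a}y)^{N}=(-1)^{N}\rho_{a}^{N/2}(y-w_{-a})^{-2N}dy^{N}$ needed throughout.

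For \eqref{eq:BP} and \eqref{eq:CQ} I would substitute these into the explicit forms $B^{n}(x)=x^{-n-2N}dx^{N}$ and $C^{m}(y)=F_{m}y^{m}dy^{N}$ of \eqref{eq:Bdef} and \eqref{eq:Cdef}. This yields $B^{n}(\lambda_{b}x)=\rho_{b}^{(n+N)/2}(x-w_{b})^{-n-2N}dx^{N}$, so that multiplying by $\rho_{b}^{\half(N-1)}$ reproduces $\Ltilde_{b}^{n}(x)$ of \eqref{eq:Ltilde}; likewise $C^{m}(\mu_{a}y)=(-1)^{N}F_{m}\rho_{a}^{(m+N)/2}(y-w_{-a})^{-m-2N}dy^{N}$, and the factor $\rho_{a}^{\half(1-N)}$ together with $(w_{-a}-y)^{-m-2N}=(-1)^{m}(y-w_{-a})^{-m-2N}$ reproduces $\Rtilde_{a}^{m}(y)$ of \eqref{eq:Rtilde}. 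Both are short verifications.

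The substantive step is \eqref{eq:DR}. I would compose the two maps to obtain $y_{\gamma}:=\mu_{a}\lambda_{b}^{-1}y=\rho_{a}^{1/2}\bigl((w_{b}-w_{-a})+\rho_{b}^{1/2}y\bigr)^{-1}$, with derivative $y_{\gamma}'=-\rho_{a}^{1/2}\rho_{b}^{1/2}\bigl((w_{b}-w_{-a})+\rho_{b}^{1/2}y\bigr)^{-2}$. Writing $c:=w_{b}-w_{-a}$, the weighted product is $y_{\gamma}^{m}y_{\gamma}'^{N}=(-1)^{N}\rho_{a}^{(m+N)/2}\rho_{b}^{N/2}(c+\rho_{b}^{1/2}y)^{-m-2N}$, whose $n$-th Taylor coefficient at $y=0$, via the generalized binomial theorem, is $(-1)^{N+n}\rho_{a}^{(m+N)/2}\rho_{b}^{(n+N)/2}\binom{m+n+2N-1}{n}c^{-m-n-2N}$. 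Feeding this into the definition \eqref{eq:Dmn} $D^{mn}(\gamma)=F_{m}F_{n}^{-1}\bigl(y_{\gamma}^{m}y_{\gamma}'^{N}\bigr)^{(n)}(0)$ and using the binomial collapse $F_{m}F_{n}^{-1}\binom{m+n+2N-1}{n}=\binom{m+n+2N-1}{m}$ (which follows from \eqref{eq:Fm} after cancelling factorials), together with $c^{-m-n-2N}=(-1)^{m+n}(w_{-a}-w_{b})^{-m-n-2N}$, I would collapse the sign factors to $(-1)^{m+N}$ and multiply by $\rho_{a}^{\half(1-N)}\rho_{b}^{\half(N-1)}$; this exactly reproduces the $a\neq-b$ case of $\Atilde_{ab}^{mn}$ in \eqref{eq:Atilde}. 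Finally, when $a=-b$ one has $c=w_{b}-w_{-a}=0$, so $y_{\gamma}(0)=\infty$ and $D^{mn}=0$ by definition, matching the vanishing of $\Atilde_{a,-a}^{mn}$.

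The main obstacle is purely bookkeeping: keeping the half-integer powers of $\rho_{a},\rho_{b}$ and the accumulated signs consistent through the Jacobian factors, the M\"obius denominator, and the reflection $c\mapsto-(w_{-a}-w_{b})$, while verifying the binomial collapse above. No analytic input is required beyond these algebraic manipulations. Alternatively, \eqref{eq:DR} could be derived from Lemma~\ref{lem:Drep}(iii) by factoring $D(\mu_{a}\lambda_{b}^{-1})=D(\mu_{a})D(\lambda_{b}^{-1})$ away from the degenerate locus, but the direct computation is cleaner and handles the $a=-b$ case uniformly.
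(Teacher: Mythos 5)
Your proposal is correct and follows essentially the same route as the paper: compute the explicit M\"obius actions $\lambda_{b}x=\rho_{b}^{-1/2}(x-w_{b})$ and $\mu_{a}y=\rho_{a}^{1/2}/(y-w_{-a})$, substitute into the closed forms of $B^{n}$, $C^{m}$ and $D^{mn}$, and invoke the binomial identity $F_{m}F_{n}^{-1}\binom{m+n+2N-1}{n}=\binom{m+n+2N-1}{m}$ to match \eqref{eq:Atilde}. Your explicit treatment of the degenerate case $a=-b$ via $\gamma(0)=\infty$ is a small but welcome addition that the paper leaves implicit.
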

\begin{proof}
	Since $\lambda_{b} x=\rho_{b}^{-\half}(x-w_{b})$ and 
	$\mu_{a} y=\rho_{a}^{\half}/(x-w_{-a})$ then \eqref{eq:Ltilde} and \eqref{eq:Bdef} imply \eqref{eq:BP} whereas  \eqref{eq:Rtilde} and \eqref{eq:Cdef} imply \eqref{eq:CQ}.
	Defining $y_{\gamma}:=\gamma y$ for $\gamma=\mu_{a}\lambda_{b}^{-1}$ we find
	\[
	y_{\gamma} =-\rho_{a}^{\half}\left( w_{-a}-w_{b}-\rho_{b}^{\half}y\right)^{-1},\quad 
	y_{\gamma}' =-\rho_{a}^{\half}\rho_{b}^{\half}\left( w_{-a}-w_{b}-\rho_{b}^{\half}y\right)^{-2},
	\]
	so that 
	\begin{align*}
		\left(y_{\gamma}^{m}y_{\gamma}'^{N}\right)^{(n)}
		(0)=
		(-1)^{m+N}
		\binom{m+n+2N-1}{n}
		\frac{\rho_{a}^{\half (m+N)}\rho_{b}^{\half (n+N)}}
		{\left( w_{-a}-w_{b}\right)^{m+n+2N}}.
	\end{align*}
	Finally, with $F_{m}$ of \eqref{eq:Fm}, we find a multinomial relation
	\[
	\binom{m+n+2N-1}{n}F_{m}=\binom{m+n+2N-1}{m}F_{n}
	=\binom{m+n+2N-1}{m,\,n,\,2N-1}.
	\]
	Then \eqref{eq:Atilde} and \eqref{eq:Dmn} imply \eqref{eq:DR}.
\end{proof}

\begin{remark}\label{rem:Poincare}
	Lemmas~\ref{lem:Drep} and \ref{lem:LRABCD} offer an alternative interpretation for the $\Psi_{N}$ expression \eqref{eq:Psisew} \cite{W}. Write the Poincar\'e sum for $\Psi_{N}$ as 
	\begin{align*}
		\Psi_{N}(x,y)=\Pi_{N}(x,y)+\sum_{k\ge 1}\sum_{\gamma \in \Gamma _{k}}\Pi_{N}(\gamma x,y),
	\end{align*}
	where $\Gamma_{k}$ denotes the set of Schotty group reduced word elements of length $k$ i.e. $\gamma=\gamma_{a_{1}}\ldots \gamma_{a_{k}}$ where for all adjacent generator labels $ a_{i} \neq -a_{i+1}$ for $i=1,\ldots,k-1$. In particular, we find that for each integer $k\ge 1$ 
	\begin{align}\label{eq:Gamksum}
		\sum_{\gamma \in \Gamma _{k}}\Pi_{N}(\gamma x,y)=\Ltilde(x)\Atilde^{k-1}R(y).
	\end{align}
	To obtain \eqref{eq:Gamksum} we first note from \eqref{eq:Rdef} and \eqref{eq:Ltilde} that 
	\begin{align}
		\notag
		\Ltilde(x)R(y)=&\sum_{a\in\I}\sum_{m\ge 0}
		\pi_{N}^{(m,0)}(w_{-a},y)
		\left(\frac{\rho_{a}}{x-w_{a}}\right)^{m}
		d\left(\frac{\rho_{a}}{x-w_{a}}\right)^{N}dy^{1-N}
		\\
		=& \sum_{a\in\I} \Pi_{N}(\gamma_{a}x,y)=\sum_{\gamma \in \Gamma _{1}}\Pi_{N}(\gamma x,y).
		\label{eq:PtildeQ}
	\end{align}
	We further find from Lemmas~\ref{lem:Drep} and \ref{lem:LRABCD} that for integer $k\ge 2$ 
	\begin{align*}
		\Ltilde(x)\Atilde^{k-1}R(y)= &
		\sum_{a_{1},\ldots,a_{k}\in\I}^{\prime}
		B(\lambda_{a_{k}}x)D(\mu_{a_{k}}\lambda_{a_{k-1}}^{-1})\ldots D(\mu_{a_{2}}\lambda_{a_{1}}^{-1}) \rho_{a_{1}}^{\half(N-1)}R_{a_{1}}(y)
		\notag
		\\
		= &
		\sum_{a_{1},\ldots,a_{k}\in\I}^{\prime}
		B(\lambda_{a_{1}}\gamma_{a_{2}}\ldots \gamma_{a_{k}}x) \rho_{a_{1}}^{\half(N-1)}R_{a_{1}}(y)
		\\
		= &
		\sum_{a_{1},\ldots,a_{k}\in\I}^{\prime}
		\Ltilde_{a_{1}}(\gamma_{a_{2}}\ldots \gamma_{a_{k}} x)R_{a_{1}}(y) 
		= \sum_{\gamma \in \Gamma _{k}}\Pi_{N}(\gamma x,y),
	\end{align*}
	using \eqref{eq:gammaSchot} and where the primed sum means that $ a_{i} \neq -a_{i+1}$ for $i=1,\ldots,k-1$. In particular, we note that the conditions of Lemma~\ref{lem:Drep} are satisfied since every length $k$ reduced word appears. 
\end{remark}

\subsection{ An Infinite Product Formula for $\det (I-\Atilde)$ } 
We define $\det (I-\Atilde)$  by 
\begin{align*}
	\log \det (I-\Atilde):
	=\Tr\log(I-\Atilde)
	=-\sum_{k\ge 1}\frac{1}{k}\Tr \Atilde^{k}.
\end{align*}
$\det(I-\Atilde)$ can be expressed in terms of a Selberg zeta function  infinite product formula related to the holomorphic part of determinant formula for the Laplacian acting on $N$-differentials on $\Sg$ \cite{McIT}. In the case $N=1$, this is the holomorphic part of the Montonen-Zograf formula  \cite{Mo}, \cite{Zo}, \cite{DP} whose inverse square root is the genus $g$ partition function for the Heisenberg vertex operator algebra \cite{T}.

An element $\gamma_{p}$ of the Schottky group $\Gamma$  is called primitive if $\gamma_{p}\neq \gamma^{l}$ for any $l>1$ and $\gamma\in \Gamma$. In particular, the identity element is not primitive.
Thus for each non-identity element $\gamma\in\Gamma$ we may write  $\gamma=\gamma_{p}^{l}$ for some primitive $\gamma_{p}$ and some $l\ge 1$.  We then find
\begin{theorem}
	$\det (I-\Atilde)$ is non-vanishing and holomorphic on $ \Cg$ and is given by
	\[	
	\det(I-\Atilde)=\prod_{m\ge 0}\prod_{\gamma_{p}}(1-q_{\gamma_{p}}^{m+N})
	,\]
	where $\gamma_{p}$, with multiplier $q_{\gamma_{p}}$,  is summed over representatives of the primitive conjugacy classes of $\Gamma$.
\end{theorem}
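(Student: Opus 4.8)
The plan is to work directly from the definition $\log\det(I-\Atilde)=-\sum_{k\ge1}\tfrac1k\Tr\Atilde^{k}$ and to evaluate each $\Tr\Atilde^{k}$ as a sum over closed reduced words in $\Gamma$. First I would expand the trace using the representation \eqref{eq:DR} of Lemma~\ref{lem:LRABCD},
\[
\Tr\Atilde^{k}=\sum_{a_{1},\ldots,a_{k}}^{\prime}\;\sum_{m_{1},\ldots,m_{k}\ge 0}
\prod_{i=1}^{k}\rho_{a_{i}}^{\half(1-N)}\rho_{a_{i+1}}^{\half(N-1)}
D^{m_{i}m_{i+1}}(\mu_{a_{i}}\lambda_{a_{i+1}}^{-1}),
\]
with all indices read cyclically ($a_{k+1}=a_{1}$, $m_{k+1}=m_{1}$) and the primed sum imposing $a_{i}\neq -a_{i+1}$, forced by the vanishing $\Atilde_{a,-a}=0$ of \eqref{eq:Atilde}. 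The product of $\rho$-powers telescopes to $1$ around the cycle, and summing over $m_{2},\ldots,m_{k}$ by the composition law of Lemma~\ref{lem:Drep}(iii) collapses the chain of $D$'s into $\Tr D(g)$ with $g=\mu_{a_{1}}\lambda_{a_{2}}^{-1}\mu_{a_{2}}\cdots\lambda_{a_{k}}^{-1}\mu_{a_{k}}\lambda_{a_{1}}^{-1}$. Using $\gamma_{a}=\mu_{a}^{-1}\lambda_{a}$, hence $\lambda_{a}^{-1}\mu_{a}=\gamma_{a}^{-1}$ and $\lambda_{a_{1}}^{-1}=\gamma_{a_{1}}^{-1}\mu_{a_{1}}^{-1}$, this telescopes to $g=\mu_{a_{1}}\bigl(\gamma_{a_{2}}^{-1}\cdots\gamma_{a_{k}}^{-1}\gamma_{a_{1}}^{-1}\bigr)\mu_{a_{1}}^{-1}$, a conjugate of the cyclically reduced word $\delta=\gamma_{a_{2}}^{-1}\cdots\gamma_{a_{k}}^{-1}\gamma_{a_{1}}^{-1}$ (cyclical reducedness being exactly the primed condition).

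Next I would establish the key local identity: for any loxodromic $\gamma\in\Gamma$ with multiplier $q_{\gamma}$, $|q_{\gamma}|<1$, one has $\Tr D(\gamma)=q_{\gamma}^{N}/(1-q_{\gamma})$. The cleanest route is to observe that $\Tr D$ is a conjugation invariant: cyclicity of the trace together with $D(h^{-1})D(h)=D(\id)=I$ and Lemma~\ref{lem:Drep}(iii) give $\Tr D(h\gamma h^{-1})=\Tr D(\gamma)$ whenever the attendant base-point conditions hold. Conjugating $\gamma$ to its diagonal normal form $y\mapsto q_{\gamma}y$ (fixed points $0,\infty$), $D^{mm}$ is the coefficient of $y^{m}$ in $(q_{\gamma}y)^{m}q_{\gamma}^{N}=q_{\gamma}^{m+N}y^{m}$, and the geometric series $\sum_{m\ge0}q_{\gamma}^{m+N}$ sums to $q_{\gamma}^{N}/(1-q_{\gamma})$; invariantly this is the holomorphic Lefschetz contribution of the attracting fixed point, at which $\gamma'=q_{\gamma}$. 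Applied to $\gamma=\gamma_{p}^{l}$ (multiplier $q_{\gamma_{p}}^{l}$) this gives $\Tr D(\gamma_{p}^{l})=q_{\gamma_{p}}^{lN}/(1-q_{\gamma_{p}}^{l})$.

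Then I would carry out the group-theoretic bookkeeping. As $(a_{1},\ldots,a_{k})$ ranges over valid tuples, $\delta$ ranges over cyclically reduced words of length $k$, and two such words are conjugate in the free group $\Gamma$ precisely when they are cyclic rotations of one another. A primitive class represented by $\gamma_{p}$ of length $d$ contributes through its powers $\gamma_{p}^{l}$ with $k=dl$; the necklace $P^{l}$ has period $d$, so exactly $d$ ordered tuples represent it. Hence $\Tr\Atilde^{k}=\sum_{dl=k}d\,q_{\gamma_{p}}^{lN}/(1-q_{\gamma_{p}}^{l})$, summed over primitive classes of length $d$. The factor $d/k=1/l$ then yields
\[
-\sum_{k\ge1}\tfrac1k\Tr\Atilde^{k}
=-\sum_{\gamma_{p}}\sum_{l\ge1}\frac1l\,\frac{q_{\gamma_{p}}^{lN}}{1-q_{\gamma_{p}}^{l}}
=-\sum_{\gamma_{p}}\sum_{m\ge0}\sum_{l\ge1}\frac{(q_{\gamma_{p}}^{m+N})^{l}}{l}
=\sum_{\gamma_{p}}\sum_{m\ge0}\log\bigl(1-q_{\gamma_{p}}^{m+N}\bigr),
\]
after expanding $(1-q^{l})^{-1}=\sum_{m\ge0}q^{lm}$ and using $\sum_{l\ge1}x^{l}/l=-\log(1-x)$; exponentiating gives the stated product.

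Finally I would settle the analytic matters. The entries $\Atilde_{ab}^{mn}$ are holomorphic rational functions of $(\bm{w,\rho})$ on $\Cg$ by \eqref{eq:Atilde}, and all rearrangements above are justified once $\sum_{\gamma_{p}}|q_{\gamma_{p}}|^{N}<\infty$ locally uniformly on $\Cg$ — the classical Poincar\'e-series convergence for Schottky groups, already implicit in the convergence of $(I-\Atilde)^{-1}$ in Proposition~\ref{prop:Psisew}. This gives local uniform convergence of the product, hence holomorphy; non-vanishing follows since every factor $1-q_{\gamma_{p}}^{m+N}\neq0$ (as $|q_{\gamma_{p}}^{m+N}|<1$) and the product is absolutely convergent, or equivalently because $I-\Atilde$ is invertible on $\Cg$. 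The main obstacle I anticipate is the rigorous evaluation of $\Tr D(\gamma)$: both the convergence of the infinite diagonal sum and the conjugation-invariance rely on the base-point conditions $\gamma(0)\neq\infty$ of Lemma~\ref{lem:Drep}, which can fail on a proper analytic subset. I would handle this by proving the identity on the dense open locus where the conditions hold and extending it to all of $\Cg$ by holomorphy, and I would be especially careful with the necklace count (the multiplicity $d$), where rotation/off-by-one errors are easy to make.
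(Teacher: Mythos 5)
Your proposal is correct and follows essentially the same route as the paper: expand $\Tr\Atilde^{k}$ via \eqref{eq:DR}, collapse the cyclic product of $D$'s with Lemma~\ref{lem:Drep}(iii) into $\Tr D$ of a cyclically reduced word, count the $k/l=d$ cyclic rotations representing each primitive power $\gamma_{p}^{l}$, and evaluate $\Tr D$ on the diagonal normal form as the geometric series $\sum_{m\ge 0}q_{\gamma_p}^{l(m+N)}$. Your extra care about the base-point conditions $\gamma(0)\neq\infty$ and the absolute-convergence justification is a welcome refinement of details the paper leaves implicit, but it does not change the argument.
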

\begin{proof}
	The proof is a variation of arguments in   \cite{T}. Using \eqref{eq:DR} we find
	\begin{align*}
		\Tr \Atilde^{k}&=\sum'_{a_{1},\ldots,a_{k}\in\I}\Tr \left( D(\mu_{a_{1}}\lambda_{a_{2}}^{-1})
		D(\mu_{a_{2}}\lambda_{a_{3}}^{-1})\ldots D(\mu_{a_{k}}\lambda_{a_{1}}^{-1})\right),
	\end{align*}
	for $\lambda_{a},\mu_{a}$ of \eqref{eq:lammu} where the prime indicates that $-a_{i}\neq a_{i+1}$ for $i=1,\ldots,k-1$ and $-a_{k}\neq a_{1}$. Note that the summand trace is over the integer labels of the  $D(\gamma)$ matrices only. From Lemma~\ref{lem:Drep}~(iii), \eqref{eq:gammaSchot} and Remark~\ref{rem:Poincare} it follows that
	\begin{align*}
		\Tr \Atilde^{k}&=\sum'_{a_{1},\ldots,a_{k}\in\I}\Tr D\left(\lambda_{a_{1}}^{-1}\mu_{a_{1}}\lambda_{a_{2}}^{-1}
		\mu_{a_{2}}\ldots \lambda_{a_{k}}^{-1}\mu_{a_{k}}\right)
		\\
		&=\sum'_{a_{1},\ldots,a_{k}\in\I}
		\Tr D
		\left(\gamma_{-a_{1}}\gamma_{-a_{2}}\ldots \gamma_{-a_{k}}
		\right)
		= \sum_{\gamma\in \Gamma_{k}^{\text{CR}}}\Tr D(\gamma),
	\end{align*}
since $\gamma_{-a}=\lambda_{a}^{-1}\mu_{a}$ and 
	where $\Gamma_{k}^{\text{CR}}$ is the set of Cyclically Reduced words of length $k$ in $\Gamma$ i.e.  reduced words $\gamma_{a_{1}}\ldots \gamma_{a_{k}}$ for which $a_{1}\neq -a_{k}$.  
	For $\gamma\in \Gamma_{k}^{\text{CR}}$ we have $\gamma=\gamma_{p}^{l}$ for some primitive cyclically reduced word $\gamma_{p}$ and some $l\ge 1$ where $l|k$. 
	Every element of  $\Gamma$ is conjugate to a cyclically reduced word and any two cyclically reduced words are conjugate if and only if they are cyclic permutations of each other e.g. Prop.~9 of \cite{C}. 
	Then it follows that there are $k/l$ cyclically reduced words conjugate to $\gamma_{p}^{l}$. Therefore we find
	\begin{align*}
		\sum_{k\ge 1}\frac{1}{k}\Tr \Atilde^{k}=\sum_{\gamma_{p}}\sum_{l\ge 1}\frac{1}{l}\Tr D(\gamma_{p}^{l}),
	\end{align*}
	summing $\gamma_{p}$ over  the primitive conjugacy classes  of $\Gamma$. 
	$\gamma_{p} $ is conjugate in $\SL(2,\C)$ to $\diag(q_{\gamma_p}^{1/2},q_{\gamma_{p}}^{-1/2})$ for multiplier $q_{\gamma_{p}}$. 
	Furthermore, from \eqref{eq:Dmn} we find
	\[
	D^{mn}\left(\diag(q_{\gamma_p}^{1/2},q_{\gamma_{p}}^{-1/2})\right)=\delta_{mn}q_{\gamma_p}^{m+N}.
	\] 
	Thus we find  $\Tr \left(D^{mn}(\gamma_{p}^{l})\right)=\sum_{m\ge 0}q_{\gamma_{p}}^{l(m+N)}$  so that
	\begin{align*}
		\log\det(I-\Atilde)&=-\sum_{m\ge 0}\sum_{\gamma_{p}}\sum_{l\ge 1}\frac{1}{l}q_{\gamma_{p}}^{l(m+N)}=\sum_{m\ge 0}\sum_{\gamma_{p}}\log(1-q_{\gamma_{p}}^{m+N}).
	\end{align*}
\end{proof}

\section{Moduli Variations and Differential Operators } \label{sec:Moduli}
\subsection{Quasiconformal maps}
We consider Bers potentials, holomorphic  differentials where the $N=2$ Bers quasiform $\Psi_{2}(x,y)$  has particular geometric significance.  
The existence of a complex structure on a Riemann surface is equivalent to that of a Riemannian metric with line element
\begin{align*}
ds^2  \sim| dz +\mu(z,\zbar)\, d\zbar|^{2},
\end{align*} 
for local coordinates $z,\zbar$ where $|\mu|<1$ with  $B(z,\zbar):=\mu(z,\zbar)dz^{-1} d\zbar\in {\mathcal A} _{-1,1}$, the  Beltrami differential  e.g. \cite{GL}. The metric can be transformed to $ds^{2} \sim|dw| ^{2}$ by a quasiconformal  map $z\rightarrow w(z,\zbar)$ provided $w(z,\zbar)$ satisfies the Beltrami equation 
\begin{align}
\label{eq:beltrami}
\partial_{\zbar}w=\mu\,\partial_{z}w.
\end{align}
$B_{\Phi}(z,\zbar)=\overline{\Phi}\Lambda^{-2}\vol$ of \eqref{eq:Beldef}  for $\Phi\in\Hgtwo$ is called a harmonic Beltrami differential.
There is a 1-1 map between the infinitesimal variations of the moduli space $\Mg$ for $\Sg$ and the space of harmonic Beltrami differentials  i.e. a bijective antilinear map, known as the Ahlfors map,  between the moduli tangent space $T(\Mg)$ and $\Hgtwo$  \cite{A}.

We may explicitly realise  the Ahlfors map in the Schottky uniformization as follows. Consider a small variation in a Schottky parameter $\eta\rightarrow \eta+\varepsilon$ with corresponding  quasiconformal map given by $z\rightarrow w(z,\zbar,\varepsilon)$ where\footnote{The factor of $\frac{1}{\pi}$ is introduced to comply with our Bers potential definition \eqref{eq:dF}.}
\begin{align}\label{eq:waz}
w=z+\frac{\varepsilon}{\pi}f_{\eta}+O(\varepsilon^2),
\end{align}
for some $f_{\eta}(z,\zbar)$. 
\eqref{eq:beltrami} implies that $\mu=\varepsilon\mu_{\eta}+O(\varepsilon^2)$ where
\[
\mu_{\eta}=\frac{1}{\pi}\partial_{\zbar}f_{\eta} .
\]
Thus for a harmonic Beltrami differential,  
$F_{\eta}=f_{\eta}(z)dz^{-1}\in \calF_{2}$ is a Bers potential for $\Phi_{\eta}=\overline{\mu}_{\eta}\pmet dz^{2}\in\Hgtwo$ from \eqref{eq:dF} with $N=2$. 
The deformed Riemann surface is uniformized with a Schottky group $\Gamma_{\varepsilon}$  where for each  $\gamma\in\Gamma$ we define $\gamma_{\varepsilon}\in\Gamma_{\varepsilon}$ via the compatibility condition: $
\gamma_{\varepsilon}w(z)=w(\gamma z)$. 
But $\gamma_{\varepsilon}z=\gamma z+\varepsilon \partial_{\eta}(\gamma z)+O(\varepsilon^2)$ and using \eqref{eq:waz} we find \cite{EO,Ro,P} 
\begin{align}\label{eq:delmgamz}
\partial_{\eta}(\gamma z)=&\frac{1}{\pi}\left(f_{\eta}(\gamma z)- f_{\eta}(z)(\gamma z)'  \right)
=\frac{1}{\pi}\Xi_{\eta}[\gamma](z)d(\gamma z),
\end{align}
where $\Xi_{\eta}$ denotes  the 1-cocycle for the potential $F_{\eta}$. 

Define a $T(\Cg)$ basis $\{\partial_{a}^{\ell}\}$ for $a\in\Ip$ and $\ell=0,1,2$ given by
\begin{align}
\label{eq:delael}
\partial_{a}^{0}:=\partial_{w_{a}},\quad
\partial_{a}^{1}:=\rho_{a} \partial_{\rho_{a}},\quad
\partial_{a}^{2}:=\rho_{a} \partial_{w_{-a}}.
\end{align}
For each generator $\gamma_{b}\in\Gamma $ of \eqref{eq:gamaz} we find
\begin{align*}
\partial_{a}^{\ell}(\gamma_{b}z)=\rho_{a}(z-w_{a})^{\ell-2}\delta_{ab}=-\Xi_{a}^{\ell}[\gamma_{b}](z)d(\gamma_{b}z),
\end{align*}
where $\Xi_{a}^{\ell}$ is the canonical cocycle basis of \eqref{eq:xinorm} for $N=2$. 
Thus we find that 
\begin{align}\label{eq:delgamz}
	\partial_{a}^{\ell}(\gamma z)=-\Xi_{a}^{\ell}[\gamma ](z)d(\gamma z), \quad  \gamma\in\Gamma,
	\end{align}
giving a natural pairing of $\partial_{a}^{\ell}$ with $\Xi_{a}^{\ell}$. In conjunction with Proposition~\ref{prop:BersMap} this implies
\begin{proposition}\label{prop:TCHm1}
 $\calF_{2}\simeq \Eitwo\simeq T(\Cg)$  as vector spaces. 
\end{proposition}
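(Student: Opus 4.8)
The plan is to establish the two isomorphisms separately, each resting on results already in place. The first, $\calF_{2}\simeq\Eitwo$, is immediate: it is precisely Proposition~\ref{prop:BersMap} specialized to $N=2$, namely that the linear map $\alpha$ of \eqref{eq:xiF} is bijective. So the substance of the proposition lies in the second isomorphism $\Eitwo\simeq T(\Cg)$, which I would obtain by producing a matching pair of bases indexed by the same set.

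First I would record the two dimensions. By the counting in the proof of Lemma~\ref{lem:dimH}, a cocycle is determined by its values on the free generators $\{\gamma_{a}\}_{a\in\Ip}$, so $\dim\Eitwo=g\dim\HOtwo=3g$, since $\HOtwo$ consists of $P(z)=p(z)dz^{-1}$ with $p\in\Poly_{2}$. On the other side, $\Cg\subset\C^{3g}$ is open, so $T(\Cg)\cong\C^{3g}$, and the vector fields $\{\partial_{a}^{\ell}\}$ of \eqref{eq:delael} form a basis: for each $a\in\Ip$ they are the coordinate fields $\partial_{w_{a}},\partial_{\rho_{a}},\partial_{w_{-a}}$ rescaled by the factors $1,\rho_{a},\rho_{a}$, which are nonzero throughout $\Cg$.

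Next I would verify that the canonical cocycles $\{\Xi_{a}^{\ell}\}$ of \eqref{eq:xinorm}, with $\Xi_{a}^{\ell}[\gamma_{b}](y)=\delta_{ab}y_{a}^{\ell}dy^{-1}$ for $a\in\Ip$ and $\ell\in\{0,1,2\}$, genuinely form a basis of $\Eitwo$. Since $\Gamma$ is free on the $\gamma_{a}$, the evaluation map $\Xi\mapsto(\Xi[\gamma_{a}])_{a\in\Ip}$ identifies $\Eitwo$ with $\bigoplus_{a\in\Ip}\HOtwo$; under this identification $\Xi_{a}^{\ell}$ is $y_{a}^{\ell}dy^{-1}$ in the $a$-th summand and zero elsewhere. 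Because $\{(y-w_{a})^{\ell}:\ell=0,1,2\}$ is a basis of $\Poly_{2}$ (a unitriangular change of basis from $\{y^{\ell}\}$), the $3g$ cocycles $\Xi_{a}^{\ell}$ are indeed a basis.

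Finally, the identity \eqref{eq:delgamz}, $\partial_{a}^{\ell}(\gamma z)=-\Xi_{a}^{\ell}[\gamma](z)d(\gamma z)$, is exactly the statement that the natural $\C$-linear map $T(\Cg)\to\Eitwo$ (variation $\mapsto$ associated cocycle) sends $\partial_{a}^{\ell}$ to a nonzero scalar multiple of $\Xi_{a}^{\ell}$, hence carries the basis of $T(\Cg)$ bijectively to the basis of $\Eitwo$; it is therefore an isomorphism. Composing with $\alpha^{-1}$ yields the full chain $\calF_{2}\simeq\Eitwo\simeq T(\Cg)$. I do not expect a genuine obstacle: the result is essentially a repackaging of Proposition~\ref{prop:BersMap} and \eqref{eq:delgamz}. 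The one point deserving care is the bookkeeping that the two indexing sets coincide and that \eqref{eq:delgamz} pairs the bases \emph{bijectively}, which is why I would phrase the final step as a correspondence of bases rather than appeal to deformation-theoretic naturality.
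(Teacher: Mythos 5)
Your proposal is correct and follows essentially the same route as the paper: the paper's own justification is precisely the pairing \eqref{eq:delgamz} of the basis $\{\partial_{a}^{\ell}\}$ of $T(\Cg)$ with the canonical cocycle basis $\{\Xi_{a}^{\ell}\}$ of $\Eitwo$, combined with the bijectivity of $\alpha$ from Proposition~\ref{prop:BersMap}. You merely make explicit the bookkeeping (dimension counts, that $\{(y-w_{a})^{\ell}\}_{\ell=0}^{2}$ spans $\Poly_{2}$, that $\rho_{a}\neq 0$ on $\Cg$) that the paper leaves implicit.
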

By Lemma~\ref{lem:dimcobound} the  coboundary space $\Boundtwo\simeq \HOtwo$.
$\HOtwo$ is  isomorphic as a vector space to the M\"obius $\slLie_{2}(\C)$  Lie algebra where   $P=p(z)dz^{-1}\in \HOtwo$ for $p\in\Poly_{2}$ is  identified with $p(z)\partial_{z}$.
By \eqref{eq:delgamz}, the $\Boundtwo$  element of \eqref{eq:cobound} can be  written as
$\Xi_{P}[\gamma](y)= \sum_{\ell=0}^{2}\sum_{a\in\Ip}p_{a}^{\ell}\Xi_{a}^{\ell}[\gamma](y)$ for some $p_{a}^{\ell}\in\C$ and 
is paired with $\D^{P}\in T(\Cg)$ given by
\begin{align}
\D^{P}:&=\sum_{\ell=0}^{2}\sum_{a\in\Ip}p_{a}^{\ell}\partial_{a}^{\ell}
=\sum_{a\in\I}p(W_{a})\partial_{W_{a}},\label{eq:DP}
\end{align}
for the  original Schottky parameters $W_{\pm a}$. Thus $\{\D^{P}\}$ generates the $\slLie_{2}(\C)$ subalgebra of $T(\Cg)$ associated with the M\"obius action \eqref{eq:Mobwrhoa}.
%
In summary, we have the following vector space isomorphisms:
\begin{align}
\label{eq:cobound2}
\HOtwo \simeq   \Boundtwo \simeq  \slLie_{2}(\C)\subset T(\Cg).
\end{align}
Recalling that $\Schg=\Cg/\SL_{2}(\C)$ with tangent space $T(\Schg)=T(\Mg)$ we may consider the relevant quotients using Proposition~\ref{prop:TCHm1} and \eqref{eq:cobound2} to find
\begin{proposition}
	\label{prop:IsVS}
$  \Hgtwo\simeq \COHtwo \simeq T(\Mg)$ as vector spaces.
\end{proposition}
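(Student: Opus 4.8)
The plan is to build the two claimed isomorphisms from the structures already assembled, the first being immediate and the second obtained by passing to a quotient of $T(\Cg)$. For $\Hgtwo\simeq\COHtwo$ I would simply specialise Proposition~\ref{prop:BersMap} to $N=2$: the Bers map $\beta\colon\Hgtwo\to\COHtwo$ is a bijection. It is complex antilinear, so it is an isomorphism of real vector spaces (and becomes $\C$-linear after composing with complex conjugation); in either reading it supplies the first isomorphism with no further work.

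For $\COHtwo\simeq T(\Mg)$ the strategy is to realise $\COHtwo$ as a quotient of $T(\Cg)$. Proposition~\ref{prop:TCHm1} furnishes a linear isomorphism $\varphi\colon T(\Cg)\to\Eitwo$, fixed by the pairing \eqref{eq:delgamz} so that the basis $\{\partial_{a}^{\ell}\}$ of $T(\Cg)$ is sent to the canonical cocycle basis $\{\Xi_{a}^{\ell}\}$ of $\Eitwo$. The decisive point is to verify that $\varphi$ carries the M\"obius subalgebra $\slLie_{2}(\C)\subset T(\Cg)$ exactly onto the coboundary space $\Boundtwo\subset\Eitwo$. This is precisely what \eqref{eq:DP} and \eqref{eq:cobound2} encode: the vector field $\D^{P}=\sum_{a,\ell}p_{a}^{\ell}\partial_{a}^{\ell}$ is mapped to the coboundary $\Xi_{P}=\sum_{a,\ell}p_{a}^{\ell}\Xi_{a}^{\ell}$ attached to $P\in\HOtwo$, whence $\varphi(\slLie_{2}(\C))=\Boundtwo$.

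With this subspace correspondence secured, standard linear algebra lets $\varphi$ descend to an isomorphism of quotients
\begin{align*}
\bar{\varphi}\colon T(\Cg)/\slLie_{2}(\C)\ \xrightarrow{\ \sim\ }\ \Eitwo/\Boundtwo=\COHtwo.
\end{align*}
Since $\Schg=\Cg/\SL_{2}(\C)$ one has $T(\Schg)=T(\Cg)/\slLie_{2}(\C)$, and because $\Schg$ covers $\mathcal{M}_{g}$ we may identify $T(\Schg)=T(\Mg)$. Composing these identifications with $\bar\varphi$ gives $T(\Mg)\simeq\COHtwo$, and chaining with the Bers isomorphism yields the full statement $\Hgtwo\simeq\COHtwo\simeq T(\Mg)$.

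The only genuinely substantive step is the verification that $\varphi$ intertwines the distinguished subspaces $\slLie_{2}(\C)$ and $\Boundtwo$; everything else is bookkeeping. As a sanity check I would confirm the dimensions all equal $3g-3$: $\dim\Hgtwo=(g-1)(2N-1)$ with $N=2$ by Riemann--Roch, $\dim\COHtwo=3g-3$ by Lemma~\ref{lem:dimH}, and $\dim T(\Mg)=\dim T(\Cg)-\dim\slLie_{2}(\C)=3g-3$.
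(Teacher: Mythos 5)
Your proposal is correct and follows essentially the same route as the paper, which obtains the result by quotienting the chain of Proposition~\ref{prop:TCHm1} by the corresponding subspaces in \eqref{eq:cobound2} (the paper folds your first step, the Bers map of Proposition~\ref{prop:BersMap}, into the same quotient picture via $\epsilon$ and $\ker\epsilon=\HOtwo$). The key verification you single out --- that $\slLie_{2}(\C)\subset T(\Cg)$ corresponds exactly to $\Boundtwo$ under the pairing of $\partial_{a}^{\ell}$ with $\Xi_{a}^{\ell}$ --- is precisely the content of \eqref{eq:DP} and \eqref{eq:cobound2} that the paper relies on.
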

\medskip

\subsection{Differential Operators on Meromorphic  Forms}
Recall the $\Hgtwo$ spanning set $\{\Theta_{2,a}^{\ell}(x)\}$ for $a\in\Ip$ and $\ell\in\{0,1,2\}$ associated with $\Psi_{2}(x,y)$ from \eqref{eq:PsiNdefgamy} and \eqref{eq:chiTheta}. 
We define a canonical  differential operator given by \cite{GT,TW}
\begin{align}
\label{eq:nablaCg}
\delCg(x):=\sum_{\ell=0}^{2}\sum_{a\in\Ip} \Theta_{2,a}^{\ell}(x) \partial_{a}^{\ell}.
\end{align}
$\delCg(x)$ depends on $x$ and the choice of Bers quasiform $\Psi_{2}$ as described in \eqref{eq:Psihat}.
From \eqref{eq:Thetatilde} we find the differential operator associated with $\Psitilde_{2}$ is
\begin{align*}
\delCgtilde(x)=\delCg(x)+ \sum_{r=1}^{3g-3} \Phi_{r}^{\vee}(x) \D^{P_{r}},
\end{align*}
for M\"obius generator $\D^{P_{r}}$ of \eqref{eq:DP}.
Thus $\delCg(x)$ determines a unique tangent vector field, independent of the choice of Bers quasiform, on
 $T(\Schg)=T(\Mg)$ for Schottky space $\Schg=\Cg/\SL_{2}(\C)$. We denote this tangent vector field by $\delMg(x)$.   In fact, for any coordinates $\{\eta_{r}\}  $ on  moduli space $\Mg$,   the $T(\Mg)$ basis $\{\partial_{\eta_{r}}\}_{r=1}^{3g-3}$ is in 1-1 correspondence with some $\Hgtwo$-basis  $\{\Phi_{r}\} _{r=1}^{3g-3}$ via the Ahlfors map. Then for Petersson dual basis  $\{\Phi^{\vee}_{r}\} _{r=1}^{3g-3}$  we find \cite{O}
\begin{align}
\label{eq:nablaMg}
\delMg(x)=\sum_{r=1}^{3g-3}\Phi^{\vee}_{r}(x)\partial_{\eta_{r}}.
\end{align}

\medskip

$\delCg(x)$ maps differentiable functions on $\Cg$  to $\Hgtwo$. 
We generalize this to an operator which acts on meromorphic multi-variable forms as follows. 
Let $H_{\bfm}(\bfy)=h_{\bfm}(\bfy)dy_{1}^{m_{1}} \ldots dy_{n}^{m_{n}}$, for $n$ integers $\bfm:=m_{1},\cdots ,m_{n}$, denote a meromorphic form of weight $(\bfm)$ in  $\bfy:=y_{1},\cdots ,y_{n}\in \Sg$. We denote the space of  such forms by $\calM_{\bfm}^{(g)}$. 

With $d_{y}:=dy\,\partial_{y}$ for $y\in\Sg$, define the differential operators 
\begin{align}
	\nabmy{\bfm}{\bfy}(x):=&
	\nabla(x)
	+\sum_{k=1}^{n}\left(\Psi_{2}(x,y_{k})\,d_{y_{k}}+m_{k} d_{y_{k}}\Psi_{2}(x,y_{k})\right),
	\label{eq:nablaCgN}
	\\
	\D^{P}_{\bfy}:=&\D^{P}
	+\sum_{k=1}^{n}\left(P(y_{k})\,d_{y_{k}}+m_{k}d_{y_{k}}P(y_{k})\right),\quad P\in \HOtwo.
	\label{eq:DPCgN}
\end{align}
Then we find $\nabmy{\bfm}{\bfy}(x):\calM_{\bfm}^{(g)}\rightarrow \calM_{2,\bfm}^{(g)}$ as follows:
\begin{proposition}\label{prop:nablaHN}
\leavevmode
\begin{enumerate}[(i)]
\item
Let $H_{\bfm}(\bfy)\in \calM_{\bfm}^{(g)}$. Then  $\nabmy{\bfm}{\bfy}(x)\,H_{\bfm}(\bfy)$ is a meromorphic form of weight $(2,\bfm)$ in $(x,\bfy)$ with additional pole structure for  $x\sim y_{k}$ given by 
	\begin{align}
		\label{eq:NablaHNpoles}
		\nabmy{\bfm}{\bfy}(x)\,H_{\bfm}(\bfy)\sim 
		dx^{2}\left(
		\frac{m_{k}H_{\bfm}(\bfy)}{(x-y_{k})^{2}}
		+
		\frac{\del_{y_{k}}H_{\bfm}(\bfy)}{x-y_{k}}
		\right).
	\end{align}
	\item $\D^{P}_{\bfy} H_{\bfm}(\bfy)=0$ for all $P\in \HOtwo $.
\end{enumerate}
\end{proposition}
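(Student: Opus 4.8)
The plan is to read the two correction sums in \eqref{eq:nablaCgN} and \eqref{eq:DPCgN} as Lie derivatives acting on the point-dependence of $H_{\bfm}$. Writing $H_{\bfm}(\bfy)=h_{\bfm}(\bfy)\,dy_{1}^{m_{1}}\cdots dy_{n}^{m_{n}}$ and $\Psi_{2}(x,y_{k})=\psi_{2}(x,y_{k})\,dx^{2}dy_{k}^{-1}$, a direct expansion of $d_{y_{k}}=dy_{k}\partial_{y_{k}}$ gives
\begin{align*}
\bigl(\Psi_{2}(x,y_{k})\,d_{y_{k}}+m_{k}\,d_{y_{k}}\Psi_{2}(x,y_{k})\bigr)H_{\bfm}
= dx^{2}\bigl(\psi_{2}\,\partial_{y_{k}}h_{\bfm}+m_{k}(\partial_{y_{k}}\psi_{2})\,h_{\bfm}\bigr)\,dy_{1}^{m_{1}}\cdots dy_{n}^{m_{n}},
\end{align*}
which is the standard weight-$m_{k}$ Lie derivative of $H_{\bfm}$ along the ``vector field'' $\psi_{2}(x,y_{k})\partial_{y_{k}}$, carrying a spectator $dx^{2}$. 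Thus $\nabmy{\bfm}{\bfy}(x)$ is the moduli derivative $\nabla(x)$ corrected by the infinitesimal motion of each marked point generated by $\Psi_{2}(x,\cdot)$, and $\D^{P}_{\bfy}$ is its $\slLie_{2}(\C)$ analogue built from $\D^{P}$ and $P$.

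For (i) I would check the three requirements for membership in $\calM_{2,\bfm}^{(g)}$ separately. The weight-$2$, $\Gamma$-invariant behaviour in $x$ is immediate, since each $\Theta_{2,a}^{\ell}(x)\in\Hgtwo$ and $\Psi_{2}(x,y_{k})$ is weight-$2$ and $\Gamma$-invariant in $x$ by \eqref{eq:GEM3rsx}; as $\nabla(x)H_{\bfm}$ is holomorphic in $x$, the only new $x$-poles come from the simple pole $\psi_{2}(x,y_{k})\sim(x-y_{k})^{-1}$, and substituting this into the displayed Lie-derivative coefficient produces exactly $\tfrac{m_{k}h_{\bfm}}{(x-y_{k})^{2}}+\tfrac{\partial_{y_{k}}h_{\bfm}}{x-y_{k}}$, i.e.\ \eqref{eq:NablaHNpoles}.

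The crux is $\Gamma$-invariance of weight $m_{k}$ in each $y_{k}$, which I would obtain by an anomaly-cancellation argument. Under $y_{k}\mapsto\gamma y_{k}$ the correction term is not invariant because $\Psi_{2}$ is only a quasiform: its quasi-periodicity \eqref{eq:PsiNdefgamy} feeds $\chi_{2}[\gamma](x,y_{k})$ into the Lie derivative, producing an anomaly equal to the Lie derivative of $H_{\bfm}$ along $\chi_{2}[\gamma](x,\cdot)$, which by \eqref{eq:chiTheta} is $-\sum_{a,\ell}\Theta_{2,a}^{\ell}(x)$ times the Lie derivative along $\Xi_{a}^{\ell}[\gamma]$. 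On the other hand, $\nabla(x)H_{\bfm}$ is not invariant either, because the moduli derivatives $\partial_{a}^{\ell}$ fail to commute with the $\Gamma$-action on $y_{k}$; differentiating the invariance relation $h_{\bfm}(\ldots,\gamma y_{k},\ldots)\bigl((\gamma y_{k})'\bigr)^{m_{k}}=h_{\bfm}(\ldots,y_{k},\ldots)$ with respect to the moduli and inserting \eqref{eq:delgamz} shows that this second anomaly is precisely $\sum_{a,\ell}\Theta_{2,a}^{\ell}(x)$ times the Lie derivative of $H_{\bfm}$ along $\Xi_{a}^{\ell}[\gamma]$. The two anomalies are equal and opposite, so they cancel and $\nabmy{\bfm}{\bfy}(x)H_{\bfm}$ is a genuine weight-$m_{k}$ form in $y_{k}$. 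I expect the main obstacle to be this last computation: one must carry out the chain-rule differentiation carefully enough to see the Schwarzian-type $(\gamma y_{k})''$ terms drop out, leaving a clean Lie derivative along the cocycle $\Xi_{a}^{\ell}[\gamma]$, and then match it against the $\chi_{2}[\gamma]$ anomaly through \eqref{eq:chiTheta}.

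For (ii) I would argue structurally rather than repeat the computation. By \eqref{eq:DP}, $\D^{P}=\sum_{a\in\I}p(W_{a})\partial_{W_{a}}$ is exactly the vector field generating the M\"obius action \eqref{eq:Mobwrhoa} on the Schottky parameters (with the multipliers fixed), while the correction sum in \eqref{eq:DPCgN} is the Lie derivative recording the induced motion $y_{k}\mapsto\gamma y_{k}$ of the marked points under the same infinitesimal transformation $\gamma=\id+\varepsilon P$, with $P\leftrightarrow p(z)\partial_{z}$. Hence $\D^{P}_{\bfy}$ is nothing but the infinitesimal generator of the simultaneous $\SL_{2}(\C)$ action on $(\text{moduli},\bfy)$. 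Since $\Schg=\Cg/\SL_{2}(\C)$ and $H_{\bfm}$ is intrinsically defined on $\Sg=\Omo/\Gamma$ --- so that it depends only on the point of $\Schg$ and transforms covariantly as a form under M\"obius maps of the points --- it is invariant along these gauge directions, and differentiating that invariance in $\varepsilon$ yields $\D^{P}_{\bfy}H_{\bfm}=0$. This parallels (i), with $P$ playing the role of $\Psi_{2}$ and $\D^{P}$ that of $\nabla(x)$, the coboundary datum $\Xi_{P}$ now replacing the quasi-periodicity datum $\chi_{2}[\gamma]$.
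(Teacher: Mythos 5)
Your proposal is correct and takes essentially the same route as the paper: the paper's proof of (i) is exactly your anomaly-cancellation argument made explicit --- it applies the operator to $h_{m}(\ygam)(\ygam')^{m}$ and uses the identity $\delCg(x)\ygam=\left(\Psi_{2}(x,\ygam)-\Psi_{2}(x,y)\right)d\ygam$ of \eqref{eq:nablayt} (i.e.\ \eqref{eq:delgamz} combined with \eqref{eq:chiTheta}) so that the $\ygam''$ terms cancel and the operator transforms covariantly. Part (ii) is likewise proved there by expanding the M\"obius invariance $H_{m}(y,W_{a},q_{a})=H_{m}(y+\varepsilon p(y),W_{a}+\varepsilon p(W_{a}),q_{a})+O(\varepsilon^{2})$ to first order, just as you describe.
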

\begin{proof}
(i)  For notational simplicity we first consider the $n=1$ case of an $m$-form $H_{m}(y)=h_{m}(y)dy^{m}$ for  meromorphic $h_{m}(y)$. Then for all $\gamma\in\Gamma$
\[
h_{m}(y)=h_{m}(\ygam)\left(\ygam'\right)^{m},
\]
where $\ygam:=\gamma y$ and $\ygam':=\dy\ygam$. From \eqref{eq:nablaCgN} it is clear that $\nabmy{m}{y}(x)h_{m}(y)$
is of weight $2$ in $x$ with poles at $x=y$ as shown in \eqref{eq:NablaHNpoles}. 
We need to also show that $\nabmy{m}{y}(x)h_{m}(y)$ is of weight $m$  in $y$. 
Define $\psi_{2}(x,y):=\Psi_{2}(x,y)dy$,  a meromorphic 2-form in $x$. Then
\begin{align}
	\nabmy{m}{y}(x)\,h_{m}(y)=&\left(\delCg(x)+\psi_{2}(x,y)\partial_{y}+m\psi_{2}^{(0,1)}(x,y)\right)\left(h_{m}(\ygam)\left(\ygam'\right)^{m}\right)
\notag
\\
=& \left(\ygam'\right)^{m}
\left(\delCg(x)+\psi_{2}(x,y)\partial_{y}\right)h_{m}(\ygam)
\notag
\\
&
+mh_{m}(\ygam)\left(\ygam'\right)^{m-1}\left(\delCg(x)\ygam'+\psi_{2}(x,y)\ygam''+\psi_{2}^{(0,1)}(x,y)\ygam'\right),
\label{eq:nabhm}
\end{align}
for $\psi_{2}^{(0,1)}(x,y):=\partial_{y}\psi_{2}(x,y)$ and  $\ygam'':=\dy^2 \ygam$. Therefore 
\eqref{eq:delgamz} implies that  
\begin{align}
	\label{eq:nablayt}
	\delCg(x)\ygam
	= -\sum_{\ell=0}^{2}\sum_{a\in\Ip}\Theta_{2,a}^{\ell}(x)\Xi_{a}^{\ell}[\gamma](y)d\ygam
	=\left(\Psi_{2}(x,\ygam)-\Psi_{2}(x,y)\right)d\ygam,
\end{align} 
using \eqref{eq:chiTheta}. Thus \eqref{eq:nablayt} implies
\begin{align}
	\notag
\left(\delCg(x)+\psi_{2}(x,y)\partial_{y}\right)h_{m}(\ygam)
=&
\delCg(x)h_{m}\vert_{y}(\ygam)
+\left(\delCg(x)\ygam+\psi_{2}(x,y)\ygam'\right)\partial_{\ygam}h_{m}	(\ygam)
\\
=& \delCg(x)h_{m}\vert_{y}(\ygam)
+\psi_{2}(x,\ygam)\,\partial_{\ygam}h_{m}	(\ygam),
\label{eq:nabhmp}
\end{align}
where $\delCg(x)h_{m}\vert_{y}$ denotes differentiation with respect to the Schottky parameters for fixed $y$.
From \eqref{eq:nablayt} we also find
\begin{align}\label{eq:nabyp}
\delCg(x)(\ygam')=\partial_{y}\left(\delCg(x)\ygam\right)
=&\psi_{2}^{(0,1)}(x,\ygam)\ygam'-\psi_{2}^{(0,1)}(x,y)\ygam'-\psi_{2}(x,y)\ygam''.
\end{align}
Substituting \eqref{eq:nabhmp} and \eqref{eq:nabyp} into \eqref{eq:nabhm} we find 
\[
\nabmy{m}{y}(x)\,h_{m}(y)= \left(\ygam'\right)^{m}\nabmy{m}{\ygam}(x)h_{m}(\ygam),
\]
for all $\gamma\in\Gamma$ as required. 
The general result \eqref{eq:nablaCgN} for $n>1$ follows in  a similar way.

(ii) The second identity follows from M\"obius invariance of $H_{\bfm}(\bfy)$ cf. \secref{subsec:Schottky}. 
We again consider the $n=1$ case with $H_{m}(y)=h_{m}(y)dy^{m}$. The Schottky sewing relation \eqref{eq:Schottkysew} is invariant under M\"obius transformations
generated by $\D^{P}$ of \eqref{eq:DP}. This implies  $H_{m}(y,W_{a},q_{a})=H_{m}( y+\varepsilon p(y),W_{a}+\varepsilon p(W_{a}),q_{a})+O(\varepsilon^2)$ for $p\in\Poly_{2}$ (where we make the dependence of $H_{m}$ on the original  Schottky parameters explicit). Thus it follows that
\begin{align*}
&h_{m}(y,W_{a},q_{a})dy^{m}=  h_{m}(y+\varepsilon p(y),W_{a}+\varepsilon p(W_{a})d(y+\varepsilon p(y))^{m}+O(\varepsilon^2)
\\
&=  H_{m}(y,W_{a},q_{a}) +\varepsilon\left( \sum_{a\in\I}p(W_{a})\partial_{W_{a}}
+p(y)\partial_{y}+mp^{(1)}(y)
\right)H_{m}(y,W_{a},q_{a})+O(\varepsilon^2).
\end{align*}
This implies (ii) for $P(z)=p(z)dz^{-1}$. The result for $n\ge 2$ follows similarly.
\end{proof}
\begin{remark}\label{rem:nablarem}
	\leavevmode
\begin{enumerate}
\item Proposition \ref{prop:nablaHN}~(ii) implies  $\nabmy{\bfm}{\bfy}(x)$ determines a unique differential operator for meromorphic forms on $\Sg$ independent of the choice of the Bers quasiform $\Psi_{2}(x,y)$. This generalizes the replacement of $\nabla(x)$ by $\delMg(x)$ of \eqref{eq:nablaMg}.
\item For meromorphic forms $H_{\bfm}(\bfy)$, $G_{\bfm'}(\bfy)$,  of respective weights  $(\bfm)$, $(\bfm')$, then $H_{\bfm}G_{\bfm'}$ is a meromorphic form of  weight  $(\bfm)+(\bfm')$ and the Leibniz rule holds
\begin{align*}
\nabmy{\bfm+\bfm'}{\bfy}(x)\left(H_{\bfm}G_{\bfm'}\right)
=\left(\nabmy{\bfm}{\bfy}(x)H_{\bfm}\right)G_{\bfm'}
+H_{\bfm}\left(\nabmy{\bfm'}{\bfy}(x)G_{\bfm'}\right).
\end{align*}
\item If $H_{\bfm}(\bfy)$ is independent of $y_k$ then $m_{k}=0$ and  $\nabmy{\bfm}{\bfy}(x)\,H_{\bfm}=\nabmy{\widehat{\bfm}}{\widehat{\bfy}}(x)\,H_{\bfm}$ where $\widehat{\bfm},\widehat{\bfy}$ are  $n-1$ labels formed by deleting the $k$-th entries of ${\bfm},{\bfy}$.
\end{enumerate}
\end{remark}
\medskip
The  $\nabmy{\bfm}{\bfy}(x)$ differential operators enter into  differential equations for the following classical differentials e.g. \cite{Mu,Fa}
\begin{itemize}
	\item  the symmetric meromorphic bidifferential $\omega(x,y)$ (i.e. $\omega_{1}(x,y)$ of  \eqref{eq:Lambda}) of the second kind	 of weight $(1,1)$ in $x,y$,
	\item the meromorphic quasi-differential $\omega_{y-z}(x):=\int^{y}_{z}\omega(x,\cdot)$ of weight $(1,0,0)$ in $(x,y,z)$,
	\item the projective connection $s(x):=6\,dx^{2}\lim_{x \rightarrow y}\left(\omega(x,y)dx^{-1}dy^{-1}-\frac{1}{(x-y)^{2}}\right)$ of weight $2$, 
	\item the holomorphic differentials  $\nu_{a}(x):=\oint_{\beta_{a}}\omega(x,\cdot)$ for homology cycle $\beta_{a}$ with $a\in\Ip$, of weight 1,
	\item the Jacobian variety coordinates $\int_{y}^{x}\nu_{a}$, a quasi-differential of weight $(0,0)$ in $(x,y)$,
	\item the period matrix $\Omega_{ab}:=\frac{1}{\tpi}\oint_{\beta_{a}}\nu_{b}$ for $a,b\in\Ip $, 
	\item the prime form $E(x,y)$ (with a multiplier system) of weight $\left(-\half,-\half\right)$ in $(x,y)$. 
\end{itemize}    
\begin{proposition}\label{prop:delsForm}
$\omega(x,y)$, $s(x)$, $\omega_{y-z}(x)$,  $\nu_{a}(x)$, $\int_{y}^{x}\nu_{a}$,  $\Omega_{ab}$ and $E(x,y)$ for $a,b\in\Ip$ satisfy the following differential equations:
	\begin{align}
		&\nabmy{1,1}{y,z}(x)\,\omega(y,z) = \omega(x,y)\omega(x,z),
		\label{eq:Nabla_omega}
		\\[4pt]
		&\nabmy{2}{y}(x)\,s(y)
		 = 6 \left(\omega(x,y)^{2}-\omega_{2}(x,y)\right), 
		 \label{eq:Nabla_s} 
		\\[4pt]
		&\nabmy{1,0,0}{w,y,z}(x)\,\omega_{y-z}(w)
		 = \omega_{y-z}(x)\omega(x,w),
		\label{eq:Nabla_omz1z2} 
		\\[4pt]
		&\nabmy{1}{y}(x)\,\nu_{a}(y) = \omega(x,y)\nu_{a}(x),
		\label{eq:Nabla_nu}
		\\[4pt]
		&\nabmy{0,0}{y,z}(x) \int_{z}^{y}\nu_{a} = \omega_{y-z}(x)\nu_{a}(x),
		\label{eq:Nabla_Ja}
		\\[4pt]
		&\tpi \nabla(x)\,\Omega_{ab} = \nu_{a}(x)\nu_{b}(x)
		\label{eq:Rauch},
		\\[4pt]
		&\nabmy{0,0}{y,z}(x)\,\log E(y,z) = -\half\omega_{y-z}^{2}(x),
		\label{eq:Nabla_E}
	\end{align}
where $\omega_{2}(x,y)$ is the symmetric differential $(2,2)$ form of  \eqref{eq:Lambda}.
\end{proposition}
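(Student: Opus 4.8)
The plan is to treat \eqref{eq:Nabla_omega} as the cornerstone and to derive the remaining six identities from it by integrating in an auxiliary variable over paths and cycles, using the Leibniz rule of Remark~\ref{rem:nablarem}(2) and the singularity structure of Proposition~\ref{prop:nablaHN}(i). The central technical device is a commutation principle: $\nabmy{\mathbf{m}}{\mathbf{y}}(x)$ commutes with integration in an auxiliary variable $u$ once one accounts correctly for the boundary contributions, which are precisely the weight-operator pieces of \eqref{eq:nablaCgN} evaluated at the endpoints.

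For \eqref{eq:Nabla_omega} itself I would compare the two sides as meromorphic forms in $y$ for fixed $x,z$; both are of weight $1$ in $y$ by Proposition~\ref{prop:nablaHN}(i). The pole of $\nabmy{1,1}{y,z}(x)\omega(y,z)$ at $y=x$ is fixed by \eqref{eq:NablaHNpoles}, while the apparent singularity at $y=z$ coming from $\omega(y,z)\sim dy\,dz/(y-z)^{2}$ is cancelled by the two position-operator pieces $\Psi_{2}(x,y)d_{y}+d_{y}\Psi_{2}(x,y)$ and $\Psi_{2}(x,z)d_{z}+d_{z}\Psi_{2}(x,z)$ in \eqref{eq:nablaCgN}, as a short Taylor expansion about $y=z$ shows. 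A matching computation gives the identical principal parts for the right-hand side $\omega(x,y)\omega(x,z)$, so the difference is a holomorphic $1$-form in $y$. To force it to vanish I would use the normalization $\oint_{\alpha_{a}}\omega=0$: since $\omega(y,z)$ and $\omega(x,y)$ both have vanishing $\alpha_{a}$-periods in $y$, and the $y$-operator piece integrates to a total $y$-derivative $d_{y}[\Psi_{2}(x,y)\omega(y,z)]$ of a single-valued function around the closed cycle $\alpha_{a}\simeq\Con{-a}$, the $\alpha_{a}$-periods of the difference all vanish; a holomorphic weight-$1$ form with vanishing $\alpha$-periods is zero.

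The derived identities then follow by the same mechanism. Integrating \eqref{eq:Nabla_omega} in $u$ along the open path from $z$ to $y$ converts the weight-$1$ $u$-operator piece $\Psi_{2}(x,u)d_{u}+d_{u}\Psi_{2}(x,u)$ into the weight-$0$ endpoint operators $\Psi_{2}(x,y)d_{y}$ and $\Psi_{2}(x,z)d_{z}$ of $\nabmy{1,0,0}{w,y,z}(x)$, producing \eqref{eq:Nabla_omz1z2}; integrating \eqref{eq:Nabla_nu} the same way gives \eqref{eq:Nabla_Ja}. The identities \eqref{eq:Nabla_nu} and \eqref{eq:Rauch} are obtained by integrating \eqref{eq:Nabla_omega} and \eqref{eq:Nabla_nu} respectively in $u$ over the cycle $\beta_{a}$, using $\oint_{\beta_{a}}\omega(x,\cdot)=\nu_{a}(x)$ and $\tfrac{1}{\tpi}\oint_{\beta_{a}}\nu_{b}=\Omega_{ab}$. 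For \eqref{eq:Nabla_s} I would take the regularized coincidence limit $w\to y$ of $\nabmy{1,1}{w,y}(x)\omega(w,y)$: subtracting the universal pole $dw\,dy/(w-y)^{2}$ merges the two weight-$1$ operators into $\nabmy{2}{y}(x)$ and extracts the projective connection $s(y)$ from the regular part of $\omega$, while the $\omega_{2}(x,y)$ term of \eqref{eq:Lambda} is generated by the action of the operator pieces on the subtracted double pole. Finally \eqref{eq:Nabla_E} follows by applying $d_{y}d_{z}$ to both sides: the left side reduces to $\nabmy{1,1}{y,z}(x)\omega(y,z)$ and the right side to $\omega(x,y)\omega(x,z)$ via $\omega_{y-z}(x)=d_{x}\log\bigl(E(x,y)/E(x,z)\bigr)$ and $\partial_{y}\partial_{z}\omega_{y-z}(x)=0$, and the single-variable integration constants are pinned down by the $y\to z$ limit and the symmetry $y\leftrightarrow z$.

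The hard part will be the commutation principle for the closed cycle $\beta_{a}$. Unlike the open path, the representative of $\beta_{a}$ in $\Omo$ runs from $\Con{a}$ to $\Con{-a}$ and its endpoints depend on the Schottky moduli through $\gamma_{a}$, so bringing $\nabla(x)$ inside $\oint_{\beta_{a}}$ produces a cycle-variation term. I expect this term to coincide exactly with the quasi-periodicity jump $\chi_{2}[\gamma_{a}]$ of \eqref{eq:chiTheta} that arises when the total-derivative $u$-operator piece is integrated across the identified boundary, so that the two cancel by \eqref{eq:delgamz} and \eqref{eq:nablayt}. Verifying this cancellation cleanly, together with the careful bookkeeping of the regularized coincidence limit feeding \eqref{eq:Nabla_s}, is where the real work lies.
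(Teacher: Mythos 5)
Your proposal follows essentially the same route as the paper: \eqref{eq:Nabla_omega} is established by showing the difference of the two sides is holomorphic and has vanishing $\alpha_{a}$-periods (the paper phrases this by expanding the difference in the basis $\nu_{a}(y)\nu_{b}(z)$ and showing the coefficients vanish), and the remaining identities follow by coincidence limits and by integration over open paths and $\beta_{a}$-cycles, with the endpoint-variation term $\nabla(x)\zgam$ cancelling against the quasi-periodicity jump exactly as you anticipate via \eqref{eq:nablayt}. The only divergence is \eqref{eq:Nabla_E}, where the paper integrates $\omega_{y-z}$ between nearby basepoints and invokes \eqref{eq:Nabla_omz1z2} rather than applying $d_{y}d_{z}$ and fixing integration constants, but both routes are workable.
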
 
\begin{remark}
Some similar differential equations previously appeared in \cite{O}.  However, neither the Bers function $\Psi_{2}$ is defined nor is  Proposition~\ref{prop:nablaHN} established there. \eqref{eq:Rauch} is equivalent to the classical Rauch formula \cite{Ra}. 
Proposition~\ref{prop:delsForm} is derived in an alternative way using  vertex operator algebras in \cite{TW} (see \cite{GT,W} also) by considering genus $g$ Virasoro Ward identities for the generalized Heisenberg vertex algebra \cite{T}.
\end{remark} 
\begin{proof}[Proof of Proposition~\ref{prop:delsForm}]
Let $F(x,y,z):=\nabmy{1,1}{y,z}(x)\,\omega(y,z)-\omega(x,y)\omega(x,z)$. $F$ is convergent at  $x=y$ and $x= z$ from \eqref{eq:NablaHNpoles} but also for $y=z$ by inspection. Thus $F$ is a holomorphic form in $x,y,z$ of weight $(2,1,1)$. Choose  a standard $\Hg_{1}$ basis $\{\nu_{a}\} $ with normalization 
\[
\oint_{\alpha_{a}}\nu_{b}(\cdot)=\oint_{\Con{-a}}\nu_{b}(\cdot)=\delta_{ab},\quad a,b,\in\Ip.
\]
Hence $F=\sum_{a,b\in\Ip}\Phi_{ab}(x) \nu_{a}(y)\nu_{b}(z)$ for 
$
\Phi_{ab}(x):=\oint_{\Con{-a}(y)}\oint_{\Con{-b}(z)}F(x,y,z)\in\Hgtwo
$.
But
\begin{align*}
\oint_{\Con{-a}(y_{1})}\oint_{\Con{-b}(z)}\nabmy{1,1}{y,z}(x)\,\omega(y,z)=&\oint_{\Con{-a}(y)}\oint_{\Con{-b}(z)}\Big( \delCg(x)\omega(y,z)+d_{y}\left(\Psi_{2}(x,y)\omega(y,z)\right)
\\
&\quad  +d_{z}\left(\Psi_{2}(x,z)\omega(y,z)\right)\Big)
=0,
\end{align*}
using $\oint_{\Con{-a}}\omega(x,\cdot)=0$ and the periodicity of $\Psi_{2}(x,y)\omega(y,z)$ in $y$ around $\Con{-a}$. Therefore  $\Phi_{ab}(x)=0$ and \eqref{eq:Nabla_omega} follows.

\eqref{eq:Nabla_s}-\eqref{eq:Nabla_E} are derived from various limits and integrals of \eqref{eq:Nabla_omega}. Thus  \eqref{eq:Nabla_s} follows from the $z\rightarrow y$ limit of \eqref{eq:Nabla_omega}.
To prove  
\eqref{eq:Nabla_omz1z2} consider
\begin{align*}
\nabmy{1,0,0}{w,y,z}(x)\,\omega_{y-z}(w)&=
\nabmy{1}{w}(x)\int_{z}^{y}\omega(w,\cdot)+\Psi_{2}(x,y)\omega(w,y)-\Psi_{2}(x,z)\omega(w,z)
\\
&=\int_{u=z}^{y}
\left(\nabmy{1}{w}(x)\omega(w,u)
+d_{u}\left(\Psi_{2}(x,u)\omega(w,u)\right)\right) 
\\
&=\int_{u=z}^{y}\nabmy{1,1}{w,u}(x)\omega(w,u)
=\omega(x,w)\int_{z}^{y}\omega(x,\cdot)=\omega_{y-z}(x)\omega(x,w).
\end{align*}

In order to prove \eqref{eq:Nabla_nu}, we recall that
$\beta_{a}$ denotes a path from $z\in  \Con{a}$ to $\zgam:=\gamma_{a}z\in  \Con{-a}$ for $a\in \Ip$. Let $g(y,w):=\omega(y,w)dw^{-1}$ denote a 1-form in $y$. Then using  \eqref{eq:nablayt} we find
\begin{align*}
\nabmy{1}{y}(x)\,\nu_{a}(y)=& \nabla(x)\int_{w=z}^{\zgam}g(y,w)dw+\int_{w=z}^{\zgam}d_{y}\left(\Psi_{2}(x,y)\omega(y,w)\right)
\\
=&
\int_{w=z}^{\zgam}\nabmy{1}{y}(x)\omega(y,w)
+(\nabla(x)\zgam)\,g(y,\zgam)
\\
=& \int_{w=z}^{\zgam}\nabmy{1}{y}(x)\omega(y,w)
+\left(\Psi_{2}(x,\zgam)-\Psi_{2}(x,z)\right)\omega(y,\zgam)
\\
=& \int_{w=z}^{\zgam}\left(\nabmy{1}{y}(x)\omega(y,w)
+d_{w}\left[\Psi_{2}(x,w)\omega(y,w)\right]\right)
\\
=&\int_{\beta_{a}(w)}\nabmy{1,1}{y,w}(x)\omega(y,w)=\omega(x,y)\nu_{a}(x).
\end{align*}
 \eqref{eq:Nabla_Ja} and Rauch's formula \eqref{eq:Rauch} follow in a  similar way.

Finally we note that 
 $
 E(y,z)=-E(z,y)\sim \left(y-z \right) dy^{-1/2}dz^{-1/2}$ for $y\sim z
 $ 
 and  $\omega_{y-z}(x)=d_{x}\log(E(x,y)/E(x,z))$  (e.g. \cite{Mu,Fa}) so that
\begin{align*}
 \int_{p}^{q}\omega_{y-z}&=\log\left(\frac{E(y,q)E(p,z)}{E(q,z)E(y,p)}\right)
\sim 2 \log E(y,z)-\log(q-z)(y-p),
\end{align*}
for $y\sim p$ and $z\sim q$. We find using \eqref{eq:Nabla_omz1z2} that for $y\sim p$ and $z\sim q$
\begin{align*}
2\,\nabmy{0,0}{y,z}(x)\log E(y,z)\sim& \int_{w=p}^{q}\nabmy{0,0}{y,z}(x)\omega_{y-z}(w)
-\Psi_{2}(x,y)\frac{dy}{p-y} 
-\Psi_{2}(x,z)\frac{dz}{q-z} 
\\
=& \int_{w=p}^{q}\nabmy{1,0,0}{w,y,z}(x)\omega_{y-z}(w)
-\int_{w=p}^{q}d_{w}\left(  \Psi_{2}(x,w)\omega_{y-z}(w)\right)
\\
&
-\Psi_{2}(x,y)\frac{dy}{p-y} 
- \Psi_{2}(x,z)\frac{dz}{q-z}  
\\
=&\omega_{y-z}(x)\omega_{q-p}(x)
+\Psi_{2}(x,p)\omega_{y-z}(p)
-\Psi_{2}(x,y)\frac{dy}{p-y} 
\\
&-\Psi_{2}(x,q)\omega_{y-z}(q)- \Psi_{2}(x,z)\frac{dz}{q-z} \sim -\omega_{y-z}^{2}(x),
\end{align*}
since $\omega_{y-z}(p)\sim \frac{dp}{p-y}$ and $\omega_{y-z}(q)\sim \frac{dq}{q-z}$  for  $y\sim p,z\sim q$. 
Thus the proposition holds.
\end{proof}
\medskip

It is natural to consider the composition  $\nabmy{2,\bfm}{x,\bfz}(y)\nabmy{\bfm}{\bfz}(x)$ of two differential operators. Such compositions arise from conformal Ward identities for genus $g$ vertex operator algebras where the locality of vertex operators implies that the composition is symmetric in $x$ and $y$ \cite{TW}. Thus we may consider 
\begin{align*}
\tpi \nabmy{2}{x}(y) \nabla(x)\,\Omega_{ab} &= \nabmy{2}{x}(y)(\nu_{a}(x)\nu_{b}(x))
=\omega(x,y)(\nu_{a}(x)\nu_{b}(y)+\nu_{a}(y)\nu_{b}(x)),
\end{align*}
using the Leibniz rule, \eqref{eq:Nabla_nu} and \eqref{eq:Rauch}. This expression is symmetric in $x,y$. But we may choose $3g-3$ independent values of $\Omega_{ab}$ as local coordinates in the neighborhood of any point in $\Mg$ so it follows from \eqref{eq:nablaMg} that
\begin{align}
	\label{eq:NablaCom1}
	\nabmy{2}{x}(y) \nabla(x)=\nabmy{2}{y}(x) \nabla(y).
\end{align}
This result can be generalized as follows
\begin{proposition}[Commutativity]
	\label{prop:NabCom}
For the differential operator \eqref{eq:nablaCgN} we have
\begin{align}
	\label{eq:NablaCom2}
  \nabmy{2,\bfm}{x,\bfz}(y)\nabmy{\bfm}{\bfz}(x)
  =
  \nabmy{2,\bfm}{y,\bfz}(x)\nabmy{\bfm}{\bfz}(y).
\end{align}
\end{proposition}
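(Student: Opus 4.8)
The plan is to apply both compositions to an arbitrary meromorphic form $H_{\bfm}(\bfz)\in\calM^{(g)}_{\bfm}$ and to prove that the commutator operator
\[
\mathcal{C}:=\nabmy{2,\bfm}{x,\bfz}(y)\,\nabmy{\bfm}{\bfz}(x)-\nabmy{2,\bfm}{y,\bfz}(x)\,\nabmy{\bfm}{\bfz}(y)
\]
annihilates every $H_{\bfm}$. By Proposition~\ref{prop:nablaHN}(i) applied twice each composition lands in $\calM^{(g)}_{2,2,\bfm}$, so $\mathcal{C}H_{\bfm}$ is a genuine weight-$(2,2,\bfm)$ form on $\Sg$, and by Remark~\ref{rem:nablarem}(1) I may use any convenient Bers quasiform. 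Expanding each operator through \eqref{eq:nablaCgN} as $\nabla(\cdot)$ plus the one-variable pieces $\Psi_{2}(\cdot,z_{k})\partial_{z_{k}}+m_{k}\Psi_{2}^{(0,1)}(\cdot,z_{k})$, together with the extra weight-$2$ piece of the outer operator, I regard $\mathcal{C}$ as a differential operator in the Schottky moduli and in $z_{1},\dots,z_{n}$.

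First I would check that $\mathcal{C}$ has order at most one. Both compositions are second order, but in each product of two partial derivatives the derivatives commute and the accompanying coefficient is symmetric under $x\leftrightarrow y$: the pure-moduli part carries $\nabla(y)\nabla(x)$, each mixed moduli/$z_{k}$ part carries $\Psi_{2}(x,z_{k})\nabla(y)\partial_{z_{k}}+\Psi_{2}(y,z_{k})\nabla(x)\partial_{z_{k}}$, and the $z_{k},z_{j}$ parts carry products $\Psi_{2}(x,z_{k})\Psi_{2}(y,z_{j})$. Hence the principal symbols agree and $\mathcal{C}$ is first order.

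Next I would kill the first-order coefficients. The coefficient of a moduli derivative splits into a part with purely $x,y$-dependent coefficients, which is exactly the first-order part of $\nabmy{2}{x}(y)\nabla(x)$ and therefore cancels against its swap by the base case \eqref{eq:NablaCom1}, and a part proportional to some $m_{k}$, namely $\sum_{k}m_{k}\bigl(\Psi_{2}^{(0,1)}(y,z_{k})\nabla(x)+\Psi_{2}^{(0,1)}(x,z_{k})\nabla(y)\bigr)$, which is visibly symmetric. A count then shows that all off-diagonal ($k\neq j$) $\partial_{z_{j}}$-coefficients are symmetric, and that the zeroth-order coefficient equals a $z$-derivative of the diagonal $\partial_{z_{k}}$-coefficient up to terms symmetric in $x\leftrightarrow y$. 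Everything thus reduces to the single identity that, for each $k$ (write $z=z_{k}$, suppress form factors, and note that the $m_{k}$-terms cancel),
\[
\nabla(y)\Psi_{2}(x,z)+\Psi_{2}(y,x)\Psi_{2}^{(1,0)}(x,z)+2\Psi_{2}^{(0,1)}(y,x)\Psi_{2}(x,z)+\Psi_{2}(y,z)\Psi_{2}^{(0,1)}(x,z)
\]
is symmetric under $x\leftrightarrow y$; call this relation $(\star)$.

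The hard part is $(\star)$, which is the infinitesimal moduli variation of the Bers quasiform, and I would establish it by a pole/Riemann--Roch argument. Let $D(x,y,z)$ be the difference of the two sides of $(\star)$; it is antisymmetric under $x\leftrightarrow y$ and, for fixed generic $x,y$, a weight-$(-1)$ form in $z$. Since the pole of $\Psi_{2}(x,z)$ at $z=x$ has moduli-independent residue $1$, the term $\nabla(y)\Psi_{2}(x,z)$ is regular at $z=x$; matching the remaining explicit double- and simple-pole contributions (whose shape is fixed by the singular structure of Proposition~\ref{prop:nablaHN}(i)) shows the singularity at $z=x$ cancels, and antisymmetry removes the one at $z=y$, so $D$ is holomorphic in $z$ on $\D$. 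Finally, because $\mathcal{C}H_{\bfm}$ is a genuine form for every $H_{\bfm}$—equivalently, the quasiperiodicity anomalies of the $\Psi_{2}$-factors cancel in the antisymmetric combination—$D$ is a holomorphic weight-$(-1)$ differential on $\Sg$, hence zero since $d_{-1}=0$. This proves $(\star)$, whence $\mathcal{C}=0$ and the asserted commutativity. The main obstacle is precisely the verification of $(\star)$: controlling $\nabla(y)\Psi_{2}(x,z)$ and the quasiperiodicity bookkeeping, the pure-moduli commutativity being already supplied by \eqref{eq:NablaCom1}.
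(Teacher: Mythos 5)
Your proposal is correct in outline and in fact reduces to exactly the same key lemma as the paper: your identity $(\star)$ is precisely the statement that the paper's quantity $A(x,y,z):=\nabmy{2,-1}{x,z}(y)\Psi_{2}(x,z)+\Psi_{2}(x,z)\,d_{z}\Psi_{2}(y,z)$ is symmetric in $x,y$, and your observation that the zeroth-order coefficient is a $z$-derivative of the first-order one up to symmetric terms is the paper's relation $d_{z}A=B+d_{z}\Psi_{2}(y,z)\,d_{z}\Psi_{2}(x,z)$. The operator expansion, the cancellation of the pure-moduli part via \eqref{eq:NablaCom1}, and the symmetry of all second-order and off-diagonal coefficients also match the paper's computation. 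Where you genuinely diverge is in the proof of $(\star)$. The paper applies $\nabmy{2,0,0}{x,z,w}(y)\nabmy{0,0}{z,w}(x)$ to $\int_{w}^{z}\nu_{a}$, which by \eqref{eq:Nabla_omz1z2} and \eqref{eq:Nabla_Ja} is manifestly symmetric in $x,y$, extracts $(A(x,y,z)-A(y,x,z))\nu_{a}(z)=F(x,y)$ with $F$ independent of $z$, and kills $F$ using the zeros of $\nu_{a}$; this sidesteps any direct discussion of how $A$ transforms in $z$, because $\nabmy{0,0}{z,w}(x)\int_{w}^{z}\nu_{a}$ is already known to be a genuine form by Proposition~\ref{prop:nablaHN}. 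You instead argue that $D=A(x,y,\cdot)-A(y,x,\cdot)$ is a globally holomorphic weight-$(-1)$ differential in $z$ and invoke $d_{-1}=0$. The pole cancellation at $z=x,y$ checks out (it is the same fact the paper asserts as "straightforward"), and your indirect route to periodicity --- that $\mathcal{C}H_{\bfm}$ is a genuine form for all $H_{\bfm}$, so the coefficient of $\partial_{z_{k}}H_{\bfm}$ must transform as a $(-1)$-form, by separating the $1$-jet of $H_{\bfm}$ at a point --- is sound, but it is the step requiring the most care and is only sketched: a direct verification of the $\Gamma$-quasiperiodicity cancellation would have to track how $\nabla(y)\Psi_{2}(x,z)$ transforms under $z\mapsto\gamma z$ (the chain-rule term \eqref{eq:nablayt} again), which is not trivial bookkeeping. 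Your version is somewhat more self-contained (it does not need \eqref{eq:Nabla_omz1z2} and \eqref{eq:Nabla_Ja}); the paper's version buys an automatic periodicity statement at the cost of importing those two identities.
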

\begin{proof}
We begin by showing that\footnote{Note that although $	\Psi_{2} (x,z)$ and $d_{z}\Psi_{2} (x,z)$ are not meromorphic forms in $z$, the differential operators $\nabmy{2,-1}{x,z}(y)$ and $\nabmy{2,0}{x,z}(y)$ are well-defined.}
\begin{align}
	\label{eq:nabPsi2}
	&  
A(x,y,z):=\nabmy{2,-1}{x,z}(y)	\Psi_{2} (x,z)+\Psi_{2} (x,z) \,d_{z} \Psi_{2} (y,z),
	\\
	\label{eq:nabPsi2p}
	& 
	B(x,y,z):=\nabmy{2,0}{x,z}(y)	\,d_{z}\Psi_{2} (x,z),
\end{align}
are both symmetric in $x,y$. 
From \eqref{eq:Nabla_omz1z2} and \eqref{eq:Nabla_Ja} we note that
\begin{align}
	\label{eq:NabNab}
	\nabmy{2,0,0}{x,z,w}(y)\nabmy{0,0}{z,w}(x)\int_{w}^{z}\nu_{a}
	&=\omega(x,y)\left(\omega_{z-w}(x)\nu_{a}(y)+\omega_{z-w}(y)\nu_{a}(x)\right),
\end{align}
which is symmetric in $x,y$ and antisymmetric in $z,w$.
It follows using Remark~\ref{rem:nablarem}~(2) and (3) that 
\begin{align*}
\nabmy{2,0,0}{x,z,w}(y)\nabmy{0,0}{z,w}(x)\int_{w}^{z}\nu_{a}
=&\nabmy{2,0,0}{x,z,w}(y)	\left(\nabla(x)\int_{w}^{z}\nu_{a} +\Psi_2(x,z)\nu_{a}(z)-\Psi_2(x,w)\nu_{a}(w)\right)
\\	=&
	\left(\nabmy{2}{x}(y)+\Psi_{2}(y,z)d_{z}+\Psi_{2}(y,w)d_{w} \right)\nabla(x)\int_{w}^{z}\nu_{a}
	\\
	& +\nu_{a}(z)\nabmy{2,-1}{x,z}(y)\Psi_2(x,z)+
\Psi_2(x,z)	\nabmy{1}{z}(y) \nu_{a}(z)
	\\
& -\nu_{a}(w)\nabmy{2,-1}{x,w}(y)\Psi_2(x,w)-
\Psi_2(x,w)	\nabmy{1}{w}(y) \nu_{a}(w)
\\
=& \nabmy{2}{x}(y)\nabla(x)\int_{w}^{z}\nu_{a}
+\Psi_{2}(y,z)\left(\omega(x,z)\nu_{a}(x)-d_{z}(\Psi_{2}(x,z)\nu_{a}(z))\right)
\\
&
-\Psi_{2}(y,w)\left(\omega(x,w)\nu_{a}(x)-d_{w}(\Psi_{2}(x,w)\nu_{a}(w))\right)
\\
 &+\nu_{a}(z)\nabmy{2,-1}{x,z}(y)\Psi_2(x,z)+
\Psi_2(x,z)	\omega(y,z) \nu_{a}(y)
\\
&-\nu_{a}(w)\nabmy{2,-1}{x,w}(y)\Psi_2(x,w)-
\Psi_2(x,w)	\omega(y,w) \nu_{a}(y),
\end{align*}
using $\nabla(x)d_{z}\int_{w}^{z}\nu_{a}=\omega(x,z)\nu_{a}(x)-d_{z}(\Psi_{2}(x,z)\nu_{a}(z))$ etc. Thus we find 
\begin{align*}
\nabmy{2,0,0}{x,z,w}(y)\nabmy{0,0}{z,w}(x)\int_{w}^{z}\nu_{a}
= A(x,y,z)\nu_a(z) - A(x,y,w)\nu_{a}(w) +\ldots,
\end{align*}
where the ellipsis refers to a sum of terms explicitly symmetric in $x,y$ and antisymmetric in $z,w$. 
It follows  from \eqref{eq:NabNab} that
\begin{align*}
	(A(x,y,z)-A(y,x,z))\nu_a(z) =(A(x,y,w)-A(y,x,w))\nu_{a}(w)=F(x,y),
\end{align*}
for some meromorphic $(2,2)$ quasiform $F(x,y)$ antisymmetric in $x,y$ and independent of $z,w$. It is straightforward to check that $A(x,y,z)-A(y,x,z)$ is convergent at $z=x,y$ and $x=y$ and is therefore holomorphic. But $\nu_{a}(z)^{-1}$ is meromorphic by the Riemann-Roch theorem so that $\nu_{a}(z)^{-1}F(x,y)=0$. Hence $A(x,y,z)$ is symmetric in $x,y$.

To show $B(x,y,z)$ is symmetric in $x,y$ we consider 
\begin{align*}
	d_{z}A(x,y,z)=& \nabla(y)\, d_{z}\Psi_{2}(x,z) +\Psi_{2}(y,x)\,d_{x}d_{z} \Psi_{2}(x,z)+2 d_{x}\Psi_{2}(y,x)\,d_{z}\Psi_{2}(x,z)
	\\
	&+\Psi_{2}(y,z) \, d_{z}^{2}\Psi_{2}(x,z)+d_{z}\Psi_{2}(y,z) \, d_{z}\Psi_{2}(x,z) 
	\\
	=&  B(x,y,z)+d_{z}\Psi_{2}(y,z) \, d_{z}\Psi_{2}(x,z).
\end{align*}
Therefore $B(x,y,z)$ is symmetric in $x,y$ because $A(x,y,z)$ is.

We now proceed to the proof of \eqref{eq:NablaCom2}. Using Remark~\ref{rem:nablarem}~(2) and (3) we find
\begin{align*}
&\nabmy{2,\bfm}{x,\bfz}(y)\nabmy{\bfm}{\bfz}(x)
=
\nabmy{2,\bfm}{x,\bfz}(y)\left( \nabla(x)
+\sum_{i=1}^{n}\left( \Psi_{2}(x,z_{i})\,d_{z_{i}}+m_{i}d_{z_{i}}\Psi_{2}(x,z_{i})\right)\right)
\\
&=\nabmy{2}{x}(y) \nabla(x)
+\sum_{j=1}^{n}
\Psi_{2}(y,z_{j})\nabla(x)d_{z_{j}} +\sum_{j=1}^{n}m_{j}d_{z_{j}}\Psi_{2}(y,z_{j}).\nabla(x)
\\
&\quad +\sum_{i=1}^{n}\nabmy{2,-1}{x,z_{i}}(y)\Psi_{2}(x,z_{i})d_{z_{i}}
+\sum_{i=1}^{n} m_{i}d_{z_{i}}\Psi_{2}(x,z_{i}).\nabmy{2,-1}{x,z_{i}}(y)
\\
& \quad 
+\sum_{i=1}^{n}\Psi_{2}(x,z_{i})
\nabla(y)d_{z_{i}}
+\sum_{i=1}^{n}\Psi_{2}(x,z_{i})\sum_{j=1}^{n}\left(
\Psi_{2}(y,z_{j})d_{z_{j}}
+(m_{j}+\delta_{ij})d_{z_{j}}\Psi_{2}(y,z_{j})
\right)d_{z_{i}}
\\
&
\quad+\sum_{i=1}^{n} m_{i}d_{z_{i}}\Psi_{2}(x,z_{i})
\nabla(y) 
+\sum_{i=1}^{n} m_{i}d_{z_{i}}\Psi_{2}(x,z_{i})\sum_{j=1}^{n}\left(\Psi_{2}(y,z_{j})d_{z_{j}}+ m_{j}d_{z_{j}}\Psi_{2}(y,z_{j})\right),
\end{align*}
noting that for any meromorphic form $H_{\bfm}(\bfz)$ of weight $\bfm$, then $d_{z_{i}}\Hm$ is of weight $m_{j}+\delta_{ij}$ in $z_{j}$. Rearranging the previous expression we find
\begin{align*}
	\nabmy{2,\bfm}{x,\bfz}(y)\nabmy{\bfm}{\bfz}(x) 
=&\nabmy{2}{x}(y) \nabla(x) 
   +\sum_{i=1}^{n}\left(A(x,y,z_{i})d_{z_{i}}+m_{i}B(x,y,z_{i})\right) 	
   \\
   & 
   +\sum_{i=1}^{n}\left(	\Psi_{2}(y,z_{i})\nabla(x)+\Psi_{2}(x,z_{i})\nabla(y) \right)d_{z_{i}}
	\\
	& +\sum_{i=1}^{n}m_{j}
	\left(d_{z_{i}}\Psi_{2}(y,z_{i})\nabla(x)+d_{z_{i}}\Psi_{2}(x,z_{i})\nabla(y)\right) 
	\\
	&+\sum_{i=1}^{n}\sum_{j=1}^{n}\left(m_{i}m_{j}d_{z_{i}}\Psi_{2}(x,z_{i}).
	d_{z_{j}}\Psi_{2}(y,z_{j})+\Psi_{2}(x,z_{i})
	\Psi_{2}(y,z_{j})d_{z_{i}}d_{z_{j}}\right) 
	\\
	&+\sum_{i=1}^{n}\sum_{j=1}^{n}
	m_{j}\left(\Psi_{2}(x,z_{i})d_{z_{j}}\Psi_{2}(y,z_{j})
	+\Psi_{2}(y,z_{i})d_{z_{j}}\Psi_{2}(x,z_{j})
	\right)d_{z_{i}},
\end{align*}
which is symmetric in $x,y$. Hence the result holds.
\end{proof}

\end{document}